\DeclareMathOperator*{\sgn}{sgn}
\def\admm{\textbf{ADMM}}
\def\sadmm{\textbf{SADMM}}
\def\fprod#1{\left\langle#1\right\rangle}
\def\prox#1{\mathbf{prox}_{#1}}
\def\T{\mathsf{T}}
\def\blambda{\lambda} 
\newtheorem{theorem}{Theorem}
\newtheorem{remark}[theorem]{Remark}
\newtheorem{definition}{Definition}
\newtheorem{lemma}[theorem]{Lemma}
\newtheorem{corollary}[theorem]{Corollary}
\newcommand{\LCal}{\mathcal{L}}
\newcommand{\be}{\begin{equation}}
\newcommand{\ee}{\end{equation}}
\newcommand{\ba}{\begin{array}}
\newcommand{\ea}{\end{array}}
\newcommand{\bpm}{\begin{pmatrix}}
\newcommand{\epm}{\end{pmatrix}}
\newcounter{subeqn} %
\def\grad{\nabla}
\def\bg{\mathbf{g}}
\def\bp{\mathbf{p}}
\def\bq{\mathbf{q}}
\def\bx{\mathbf{x}}  
\def\by{\mathbf{y}}
\def\bz{\mathbf{z}}
\def\cD{\mathcal{D}}
\def\cE{\mathcal{E}}
\def\cG{\mathcal{G}}
\def\cL{\mathcal{L}}
\def\cN{\mathcal{N}}
\def\cO{\mathcal{O}}
\def\cP{\mathcal{P}}
\def\cY{\mathcal{Y}}
\def\cZ{\mathcal{Z}}
\definecolor{LightCyan}{rgb}{0.88,1,1}
\def\smskip{\smallskip}
\def\texitem#1{\par\smskip\noindent\hangindent 25pt
               \hbox to 25pt {\hss #1 ~}\ignorespaces}
\def\norm#1{\left\|#1\right\|}
\newcommand{\BEAS}{\begin{eqnarray*}}
\newcommand{\EEAS}{\end{eqnarray*}}
\newcommand{\BEA}{\begin{eqnarray}}
\newcommand{\EEA}{\end{eqnarray}}
\newcommand{\BEQ}{\begin{eqnarray}}
\newcommand{\EEQ}{\end{eqnarray}}
\newcommand{\BIT}{\begin{itemize}}
\newcommand{\EIT}{\end{itemize}}
\newcommand{\BNUM}{\begin{enumerate}}
\newcommand{\ENUM}{\end{enumerate}}
\newcommand{\BA}{\begin{array}}
\newcommand{\EA}{\end{array}}
\newcommand{\reals}{\mathbb{R}}
\newcommand{\integers}{\mathbb{Z}}
\newcommand{\Rank}{\mathop{\bf rank}}
\newcommand{\diag}{\mathop{\bf diag}}
\newcommand{\argmin}{\mathop{\rm argmin}}
\newcommand{\ri}{\mathop{\bf ri}}
\newcommand{\dom}{\mathop{\bf dom}}
\newif\ifpagenumbering
\newsavebox{\theorembox}
\newsavebox{\lemmabox}
\newsavebox{\assbox}
\savebox{\theorembox}{\noindent\bf Theorem}
\savebox{\lemmabox}{\noindent\bf Lemma}
\savebox{\assbox}{\noindent\bf Assumption}
\newtheorem{assumption}{\usebox{\assbox}}
\begin{document}
%
\title{Distributed Linearized Alternating Direction Method of Multipliers for Composite Convex Consensus Optimization}


\author{\IEEEauthorblockN{N. S. Aybat\IEEEauthorrefmark{1},
Z. Wang\IEEEauthorrefmark{1},
T. Lin\IEEEauthorrefmark{2},
and S. Ma\IEEEauthorrefmark{2}}\\
\IEEEauthorblockA{\IEEEauthorrefmark{1}IME Department, The Pennsylvania State University, PA, USA. \\Emails: {\tt nsa10@psu.edu}, {\tt zxw121@psu.edu}}\\
\IEEEauthorblockA{\IEEEauthorrefmark{2}Department of SEEM, The Chinese University of Hong Kong, Hong Kong. \\Emails: {\tt linty@se.cuhk.edu.hk}, {\tt sqma@se.cuhk.edu.hk}.}%
\thanks{Research of N. S. Aybat was partially supported by NSF grants CMMI-1400217 and CMMI-1635106. Research of S. Ma was supported in part by the Hong Kong Research Grants Council General Research Funds Early Career Scheme (Project ID: CUHK 439513).}}

%



\IEEEtitleabstractindextext{%
\vspace*{-4mm}
\begin{abstract}
Given an {undirected} graph $\cG=(\cN,\cE)$ of agents $\cN=\{1,\ldots,N\}$ connected with edges in $\cE$, we study how to compute an optimal decision on which there is consensus among agents and that minimizes the sum of agent-specific private convex composite functions $\{\Phi_i\}_{i\in\cN}$ while respecting privacy requirements, where $\Phi_i\triangleq \xi_i + f_i$ belongs to agent-$i$. Assuming only agents connected by an edge can communicate, we propose a distributed proximal gradient method DPGA for {consensus optimization over both unweighted and weighted static (undirected) communication networks}. In one iteration, each agent-$i$ computes the prox map of $\xi_i$ and gradient of $f_i$, and this is followed by local communication with neighboring agents. We also study its stochastic gradient variant, SDPGA, which can only access to noisy estimates of $\grad f_i$ at each agent-$i$. This computational model abstracts a number of applications in distributed sensing, machine learning and statistical inference. We show ergodic convergence in both sub-optimality error and consensus violation for DPGA and SDPGA with rates $\cO(1/t)$ and $\cO(1/\sqrt{t})$, respectively.
\vspace*{-2mm}
\end{abstract}

\begin{IEEEkeywords}
linearized ADMM, distributed optimization, composite convex function, augmented Lagrangian, first-order method.
\end{IEEEkeywords}}

\maketitle

\IEEEdisplaynontitleabstractindextext

%
\IEEEpeerreviewmaketitle

\section{Introduction}
\label{sec:intro}

Let $\cG=(\cN,\cE)$ denote a \emph{connected} undirected graph of $N$ computing nodes where nodes $i$ and $j$ can communicate information only if $(i,j) \in \cE$.  Each node $i\in\cN\triangleq\{1,\ldots,N\}$ has a \emph{private} (local) cost function $\Phi_i:\reals^n\rightarrow\reals\cup\{+\infty\}$ such that \vspace*{-1mm}
\begin{equation}
\label{eq:F_i}
\Phi_i(x)\triangleq \xi_i(x) + f_i(x),
\end{equation}
where $\xi_i$ is a possibly \emph{non-smooth} convex function, and $f_i$ is a \emph{smooth} convex function.
\begin{assumption}
\label{assumption-0}
$\xi_i: \mathbb{R}^n \rightarrow \mathbb{R}\cup\{+\infty\}$, and $f_i: \mathbb{R}^n \rightarrow \mathbb{R}$ are closed convex functions such that $f_i$ is differentiable on an open set containing $\dom \xi_i$ with a Lipschitz continuous gradient $\grad f_i$, of which Lipschitz constant is $L_i$; and the prox map of $\xi_i$,
\begin{equation}
\label{eq:prox}
\prox{\xi_i}(x)\triangleq\argmin_{y \in \reals^n} \left\{ \xi_i(y)+\tfrac{1}{2}\norm{y-x}^2 \right\},
\end{equation}
is \emph{efficiently} computable for $i\in\cN$.
\end{assumption}
In this paper, we study a distributed consensus problem~\cite{Tsitsiklis84_1T}; in particular, we consider solving a multi-agent consensus optimization problem of minimizing the sum of privately known composite convex functions in \eqref{eq:F_i} satisfying Assumption~\ref{assumption-0}: \vspace*{-2mm}
\begin{equation}
\label{eq:central_opt}
F^*\triangleq\min_{x \in \mathbb{R}^n}\sum_{i\in\cN} \Phi_i(x).
\end{equation}
We consider the setting where only local information exchange is allowed, i.e., there is no central node such that the data can be collected, and only neighboring nodes can exchange data; and we focus on the following equivalent formulation:
\begin{equation}
\label{eq:dist_opt}
\min_{\bx}\Big\{ F(\bx)\triangleq\sum_{i\in\cN}\Phi_i(x_i):\ x_i=x_j,\ \forall~(i,j)\in\cE\Big\}.
\end{equation}
where $\bx=[x_i]_{i\in\cN}\in\mathbb{R}^{n|\cN|}$. 
We call $\bar{\bx} = [\bar{x}_i]_{i\in \cN}$, $\epsilon$-feasible
if the consensus violation satisfies $\max_{(i,j)\in\cE}\big\{\norm{\bar{x}_i-\bar{x}_j}\big\} \leq \epsilon$
and $\epsilon$-optimal if $\big|\sum_{i\in\cN}\Phi_i(\bar{x}_i)-F^*\big| \leq
\epsilon$.

This computational setting, i.e., decentralized consensus optimization, appears as a generic model for various applications in signal processing, e.g., \cite{chang2015multi,lesser2003distributed,ling2010decentralized,ravazzi2015distributed,schizas2008consensus}, machine learning, e.g.,~\cite{forero2010consensus,mcdonald2010distributed,yan2013distributed} and statistical inference, e.g.,~\cite{lopuhaa1991breakdown, mateos2010distributed}. Clearly, \eqref{eq:central_opt} can also be solved in a ``centralized" fashion by communicating all the private functions $\Phi_i$ to a \emph{central} node, and solving the overall problem at this
node. However, such an approach can be very expensive both from communication
and computation perspectives when compared to the distributed algorithms which are far more scalable to increasing problem data and network sizes. In particular, suppose $(A_i,b_i) \in \reals^{m \times (n+1)}$ and $\Phi_i(x) = \norm{A_i x - b_i}^2 +\lambda \norm{x}_1$ for some given $\lambda>0$ for $i\in\cN$ such that $m\ll n$ and $N\gg 1$. Hence, \eqref{eq:central_opt} is a very large scale LASSO problem with \emph{distributed} data. 
To solve~\eqref{eq:central_opt} in a centralized fashion, the data
$\{(A_i,b_i): i \in \cN\}$ needs to be communicated to the central node. This can be
prohibitively expensive, and may also violate privacy constraints -- in case some node $i$
 does not want to reveal the details of its \emph{private} data.
Furthermore, it requires that the central node has large enough memory to
be able to accommodate all the data. On the other hand, at the expense of slower
convergence, one can completely do away with a central node, and seek for
\emph{consensus} among all the nodes on an optimal decision
using ``local" decisions communicated by the neighboring nodes.
These
considerations in mind, we
propose decentralized algorithms that can compute
solutions to~\eqref{eq:central_opt} using only local computations without explicitly requiring
the nodes to communicate the functions $\{\Phi_i: i \in \cN\}$; thereby,
circumventing all privacy, communication and memory issues.

The contribution of the paper can be summarized as follows: 1) we propose a proximal gradient alternating direction method of multipliers (PG-ADMM) and its stochastic gradient variant SPG-ADMM to solve composite convex problems; we only assume that the prox map of $\xi_i$ can be computed efficiently, while other 
ADMM based algorithms are efficient when either $\Phi_i=\xi_i+f_i$ or both $\xi_i$ and $f_i$ have \emph{simple} prox maps that can be computed efficiently; {2) we establish that when the gradient is deterministic PG-ADMM is equivalent to primal-dual algorithms for saddle-point problems proposed in~\cite{chambolle2015ergodic,condat2013primal} -- hence, SPG-ADMM extends these algorithms to noisy gradient setting;} 3) we show ergodic convergence of both (expected) suboptimality and consensus violation bounds for PG-ADMM with the rate of $\cO(1/t)$, and for SPG-ADMM with the rate of $\cO(1/\sqrt{t})$; 4) we implement PG-ADMM and SPG-ADMM on 
consensus formulations of \eqref{eq:central_opt} for unweighted and weighted static communication networks -- this gives rise to two different \emph{node-based} distributed algorithms: DPGA and DPGA-W and their stochastic gradient variants SDPGA and SDPGA-W -- and we examine the effect of the underlying network topology on their convergence rate. 5) The proposed algorithms DPGA, DPGA-W, SDPGA and SDPGA-W are fully distributed, i.e., the agents are \emph{not} required to know any global parameters depending on the entire network topology, e.g., the second smallest eigenvalue of the Laplacian; instead, we only assume that agents know who their neighbors are. Using only local communication, our \emph{node-based} distributed algorithms require less communication burden and memory storage compared to edge-based distributed algorithms. 6) Proposed algorithms consist of a \emph{single loop}, i.e., there are no outer and inner iteration loops; therefore, they are easy and practical to be implemented over distributed networks.

To sum up, there are many practical problems where one can compute the prox map for $\xi_i$
efficiently; however, computing the prox map for $\Phi_i=\xi_i+f_i$ is not easy. The methods proposed in this paper can compute an $\epsilon$-optimal $\epsilon$-feasible solution in
$\cO(\epsilon^{-1})$ iterations \emph{without} assuming bounded $\grad f_i$ for any $i\in\cN$; each iteration of these methods requires computing $\prox{\xi_i}$ and $\grad f_i$ for $i\in\cN$, and one or two communication rounds among the neighbors -- hence, $\cO(\epsilon^{-1})$ communications per node in total.\vspace*{-3mm}
\begin{table*}[t!]
\setlength\extrarowheight{1pt}
  \centering
  {
    \vspace*{-0.25 cm}
    \caption{Comparison of our method with the previous work}
    \label{tab:prev}
    \resizebox{\textwidth}{!}{
    \begin{tabular}{|l|l|l|l|l|l|l|}
      \hline
      \multirow{2}{*}{\textbf{Reference}} & \multirow{2}{*}{\textbf{assumption on} $\Phi_i$} & \multirow{2}{*}{operation / iter.} &iter \# for & iter \# for & comm. steps& Single Loop\\
      & & & $\epsilon$-feas. & $\epsilon$-opt. & $\epsilon$-opt. & (\textbf{simple})\\ \hline
      Duchi et al. (2012)~\cite{Duchi12} & convex, Lipschitz cont. & subgrad., projection& unknown & $\mathcal{O}(1/\epsilon^2)$ &$\cO(1/\epsilon^2)$ & yes \\
      \hline
      Nedic \& Ozdaglar (2009)~\cite{nedic2009distributed} & convex & subgrad. &$\mathcal{O}(1)$ & $\mathcal{O}(1/\epsilon^2)$ & $\mathcal{O}(1/\epsilon^2)$& yes \\
      \hline
      Wei \& Ozdaglar (2012)~\cite{wei2012_1} & strictly convex & $\prox{\Phi_i}$& unknown & $\cO(1/\epsilon)$&$\cO(1/\epsilon)$ & yes\\ \hline
      Makhdoumi \& Ozdaglar (2014)~\cite{makhdoumi2014broadcast} & convex & $\prox{\Phi_i}$ & $\cO(1/\epsilon)$& $\cO(1/\epsilon)$& $\cO(1/\epsilon)$& yes \\ \hline
      Wei \& Ozdaglar (2013)~\cite{wei20131} & convex & $\prox{\Phi_i}$ & $\cO(1/\epsilon)$ & $\cO(1/\epsilon)$ & $\cO(1/\epsilon)$ & yes \\ \hline
      \multirow{2}{*}{Jakovetic et al. (2014)~\cite{jakovetic2011fast} (D-NC)} & smooth convex& \multirow{2}{*}{$\grad{\Phi_i}$} &\multirow{2}{*}{$\mathcal{O}(1/\sqrt{\epsilon})$} &  \multirow{2}{*}{$\mathcal{O}(1/\sqrt{\epsilon})$} &\multirow{2}{*}{ $\mathcal{O}(\log(1/\epsilon)/\sqrt{\epsilon})$} & \multirow{2}{*}{no} \\
      & bounded $\grad \Phi_i$ & & & & &\\ \hline
      \multirow{2}{*}{Chen \& Ozdaglar (2012)~\cite{chen2012fast}} & composite convex $\Phi_i=\xi+ f_i$& \multirow{2}{*}{$\prox{\xi}$, $\grad f_i$}& \multirow{2}{*}{$\cO(1/\sqrt{\epsilon})$} & \multirow{2}{*}{$\cO(1/\sqrt{\epsilon})$} & \multirow{2}{*}{$\cO(1/\epsilon)$}& \multirow{2}{*}{no} \\
      & bounded $\grad\gamma_i$ & & & & &\\ \hline
      Shi et al. (2015)~\cite{shi2015proximal} (PG-EXTRA) & composite convex $\Phi_i=\xi_i+ f_i$& $\prox{\xi_i}$, $\grad \gamma_i$ &$\mathcal{O}(1/\epsilon^2)$ & $\mathcal{O}(1/\epsilon^2)$~KKT & $\cO(1/\epsilon^2)$ & {yes} \\ \hline
      Aybat et al. (2015)~\cite{icml2015_aybat15} (DFAL) & composite convex $\Phi_i=\xi_i+ f_i$& $\prox{\xi_i}$, $\grad f_i$ &$\mathcal{O}(1/\epsilon)$ & $\mathcal{O}(1/\epsilon)$ & $\cO(1/\epsilon)$ & {no} \\ \hline
       Our work (DPGA, DPGA-W) & composite convex $\Phi_i=\xi_i+f_i$& $\prox{\xi_i}$, $\grad f_i$ &$\mathcal{O}(1/\epsilon)$ & $\mathcal{O}(1/\epsilon)$ & $\cO(1/\epsilon)$ & {yes} \\ \hline
    \end{tabular}
  }}
  \vspace*{-0.5cm}
\end{table*}
\subsection{Notation}
Throughout the paper, $\norm{\cdot}$ {denotes the Euclidean or the spectral norm depending on its argument, i.e., for a matrix $A$, $\norm{A}=\sigma_{\max}(A)$}. Let $d\in\integers_+$. $\mathbf{1}_{d}\in\reals^{d}$ is the vector of all ones. Let $\mathbb{S}_d$ denote the set of symmetric matrices in $\reals^{d\times d}$, $\otimes$ denotes the Kronecker product, and $I_d$ is the $d\times d$ identity matrix. Given a graph $\cG=(\cN,\cE)$, $\Omega\in\reals^{|\cN|\times|\cN|}$ denotes the graph Laplacian. Given a 
positive definite matrix $Q\in\mathbb{S}^n$, i.e., $Q\succ\textbf{0}$, define $\norm{v}_Q\triangleq\sqrt{v^\top Q v}$ for all $v\in\reals^n$.\vspace*{-2mm}
\subsection{Related work}
\label{sec:related_work}
A number of different distributed 
algorithms have been proposed to solve \eqref{eq:dist_opt} -- see Table~\ref{tab:prev} that displays some recent work. Duchi et al.~\cite{Duchi12} proposed a dual averaging algorithm to
solve~\eqref{eq:central_opt} in a distributed fashion over $\cG$ 
when each $\Phi_i$ is convex.
This algorithm computes $\epsilon$-optimal solution 
in
$\mathcal{O}(1/\epsilon^2)$ iterations; however, they do not provide
any guarantees on the consensus violation
$\max\{\norm{\bar{x}_i-\bar{x}_j}_2:\
(i,j)\in\cE\}$. Nedi\'{c} and Ozdaglar~\cite{nedic2009distributed}
developed a subgradient method with constant step size
$c>0$ for distributed minimization of \eqref{eq:central_opt} where
the network topology is time-varying. 
Setting the subgradient stepsize $c=\cO(\epsilon)$ in their method guarantees to compute a solution $\bar{\bx}=[\bar{x}_i]_{i\in\cN}$ such that its consensus violation $\max\{\norm{\bar{x}_i-\bar{x}_j}_2:\
(i,j)\in\cE\}\leq \epsilon$ within $\cO(1)$ iterations; and its suboptimality is bounded from above as $\sum_{i\in\cN}\Phi_i(\bar{x}_i)-F^*\leq\epsilon$ within $\cO(1/\epsilon^2)$ iterations; however, since the step size is
constant, neither suboptimality nor consensus errors are guaranteed to decrease further. Although these algorithms are for more general problems and assume mere convexity on each $\Phi_i$, this generality comes at the cost of $\mathcal{O}(1/\epsilon^2)$ complexity bounds, and they also tend to be very slow in practice. On the other extreme, under much stronger conditions: assuming each $\Phi_i$ 
is \emph{smooth} and has \emph{bounded} gradients, 
Jakovetic et al.~\cite{jakovetic2011fast} developed a fast distributed gradient method D-NC
with $\mathcal{O}(\log(1/\epsilon)/\sqrt{\epsilon})$ convergence rate in communication rounds.
For the quadratic loss, which is one of the most commonly used loss functions, \emph{bounded} gradient assumption does \emph{not} hold. In terms of distributed applicability, D-NC requires all the nodes $\cN$ to agree on a doubly stochastic weight matrix $W\in\reals^{|\cN|\times|\cN|}$; it also assumes that the second largest eigenvalue of $W\in\reals^{|\cN|\times|\cN|}$ is known globally among all the nodes -- this is not attainable for very large scale fully distributed networks. D-NC is a two-loop algorithm: for each outer loop $k$, each node computes their gradients once, and it is followed by $\cO(\log(k))$ communication rounds. 
In the rest, 
we briefly discuss those algorithms that balance the trade-off between the iteration complexity and the required assumptions on $\{\Phi_i\}_{i\in\cN}$.

Wei \& Ozdaglar~\cite{wei2012_1,wei20131}, and
recently Makhdoumi \& Ozdaglar~\cite{makhdoumi2014broadcast} proposed distributed ADMM
algorithms that
can compute an $\epsilon$-optimal  and $\epsilon$-feasible
solution in $\mathcal{O}(1/\epsilon)$ prox map evaluations for each $\Phi_i$. 
These algorithms have superior iteration complexity compared to the subgradient methods discussed above. That said, there are many practical problems where one can compute 
$\prox{\xi_i}$
efficiently; but, computing the prox map for $\Phi_i=\xi_i+f_i$ is
\emph{not} easy -see Section \ref{sec:numerical} for an example. One can
overcome this limitation of
ADMM by locally splitting variables, i.e., setting
$\Phi_i(x_i,y_i)\triangleq\xi_i(x_i)+f_i(y_i)$, and adding a constraint $x_i=y_i$ in
\eqref{eq:dist_opt}. However, this approach more than \emph{doubles} local memory requirement; 
and in order for
ADMM to be efficient, 
 the prox maps for \emph{both} $\xi_i$ and $f_i$ 
still must be simple. 


When node functions $\Phi_i$ are composite convex, i.e., $\Phi_i=\xi+f_i$, assuming that
the non-smooth term $\xi$ is the \emph{same} at all nodes,
and $\grad f_i$ is \emph{bounded} for all $i\in\cN$, Chen \& Ozdaglar~\cite{chen2012fast} proposed an inexact
proximal-gradient method, which exploits the function structure, for distributed minimization of
\eqref{eq:central_opt} over a time-varying network topology. Their method also consists of two loops; it can compute $\epsilon$-feasible and
$\epsilon$-optimal solution 
in $T=\cO(1/\sqrt{\epsilon})$
iterations which require $k$ communication rounds with neighbors during the $k$-th iteration for each $1\leq k\leq T$ -- hence, leading to $\sum_{k=1}^{T}k=\cO(\epsilon^{-1})$
communications per node in total. Note that there are also many practical problems where nodes in the network have different non-smooth components in their objective and/or have different preference when choosing non-smooth regularizers.
In contrast, our methods 
allow node specific non-smooth functions $\xi_i$, do not
assume bounded $\grad f_i$ for any $i\in\cN$, and are still able to compute
an $\epsilon$-optimal $\epsilon$-feasible solution in
$\cO(\epsilon^{-1})$ iterations. 

Recently, Shi et al.~\cite{shi2015proximal} and Aybat et al.~\cite{icml2015_aybat15} proposed proximal gradient based distributed algorithms that can solve \eqref{eq:central_opt} over a static connected network when $\Phi_i=\xi_i+f_i$ as in \eqref{eq:F_i}.
In~\cite{shi2015proximal}, the proximal gradient exact first-order algorithm (PG-EXTRA) is proposed;
PG-EXTRA is an extension of 
the algorithm EXTRA~\cite{shi2015extra} to handle the non-smooth terms $\{\xi_i\}_{i\in\cN}$. They showed $\cO(1/t)$ convergence for the ergodic average of \emph{squared} residuals for the consensus and KKT violations. That said, we consider their rate result as $\cO(1/\sqrt{t})$ because their result on consensus violation, $\frac{1}{t}\sum_{k=1}^t\norm{U\bx^k}^2=\cO(1/t)$, only guarantees $\norm{U\bar{\bx}^t}=\cO(1/\sqrt{t})$, where $\bx^k=[x^k_i]_{i\in\cN}\in\reals^{n|\cN|}$ denotes PG-EXTRA iterate at iteration $k\geq 1$, $\bar{\bx}^t\triangleq\sum_{k=1}^t\bx^k/t$ denotes their ergodic average, and $U\in\reals^{n|\cN|\times n|\cN|}$ such that null space of $U$ only contains $\mathbf{1}\in\reals^{n|\cN|}$, i.e., $U\bx=\mathbf{0}$ implies $x_1=\ldots=x_N$. PG-EXTRA is a node-based distributed algorithm, and each node $i\in\cN$ stores \emph{four} different copies of local variable $x_i$ at each iteration $k$, say $x_i^{k+3/2}, x_i^{k+1}, x_i^{k+1/2}$ and $x_i^{k}$, and requires  \emph{two} rounds of local communications to be able to compute next iterates -- one can reduce the information exchange to one round per iteration at the expense of increasing the storage 
at node $i$ from $4n$ to $(4 + d_i)n$. In terms of 
applicability, PG-EXTRA requires all the nodes $\cN$ to agree on two \emph{symmetric} mixing matrices: $W,\tilde{W}\in\reals^{|\cN|\times|\cN|}$ such that $\tilde{W}\succ\mathbf{0}$ and $\frac{W+I}{2}\succeq \tilde{W}\succeq W$ -- see Assumption~1 in~\cite{shi2015proximal}. The gradient stepsize, $c$, is the \emph{same} for all nodes, and should satisfy $c<\sigma_{\min}(\tilde{W})/L_{\max}$. If $W$, $\tilde{W}$ are chosen such that $W\succeq\textbf{0}$ (this requires coordination among all the agents), and $\tilde{W}=\frac{W+I}{2}$, then $c$ can be chosen $c\in(0,\frac{1}{L_{\max}})$, which is independent of the global topology, and only depends on $L_{\max}\triangleq\max_{i\in\cN}L_i$ -- some max-consensus algorithm is needed to compute $L_{\max}$, and this may not be feasible for very large scale fully distributed networks.

Aybat et al.~\cite{icml2015_aybat15} proposed a distributed first-order augmented Lagrangian (DFAL) algorithm to solve \eqref{eq:central_opt}, where each $\Phi_i$ is a composite convex function as in \eqref{eq:F_i}. Assuming $\xi_i$ is bounded below by a norm, i.e., $\xi_i(.)\geq\norm{.}$, and it has a uniformly bounded subdifferential for each $i\in\cN$, they showed that any limit point of DFAL iterates is optimal; and for any $\epsilon>0$, an $\epsilon$-optimal and $\epsilon$-feasible solution can be computed within $\cO(\log(\epsilon^{-1}))$ DFAL iterations, which require $\cO(\frac{\sigma_{\max}^{1.5}(\Omega)}{d_{\min}}~\epsilon^{-1})$ gradient computations and communications per node in total, where
$d_{\min}$ is the degree of the smallest degree node.
Based on our tests and the results reported in \cite{icml2015_aybat15} the algorithm DFAL performs very well in practice; however, due to its double-loop structure, 
distributed implementation requires a more complex network protocol. Specifically, checking the subgradient stopping criterion for inner iterations requires evaluating a logical conjunction over $\cG$, which may not be easy for large networks.

Table~\ref{tab:str_commu} summarizes the storage and communication requirements for the algorithms discussed above. We will illustrate their practical performance in Section~\ref{sec:numerical}. 
After we started writing this paper, we became aware of other recent work~\cite{bianchi2014stochastic,chang2015multi,Ling15_1J,hong2015stochastic} for solving \eqref{eq:central_opt} over a connected graph $\cG$. These methods are very closely related to our proximal gradient ADMM~(PG-ADMM), and are based on linearized ADMM method. Suppose $\Phi_i(x)=\xi_i(x)+f_i(A_i x)$. When compared to our smooth convexity assumption on $\{f_i\}_{i\in\cN}$, Chang et al.~\cite{chang2015multi} showed the convergence of their distributed method under a far more stringent assumption: $f_i$ is \emph{strongly} convex with a Lipschitz continuous gradient for $i\in\cN$. Moreover, under this stronger assumption, they were able to show linear convergence rate \emph{only} when the non-smooth terms are absent, i.e., $\xi_i\equiv 0$, and $A_i$ has full column rank for all $i\in\cN$. Finally, their distributed algorithm requires the global knowledge of $\sigma_{\min}(\Omega+2W)$ of the graph $\cG$, where $W$ is the adjacency matrix. 
On the other hand, the algorithms we propose in this paper are fully distributed, i.e., the agents do not require the knowledge of some global parameters depending on the entire network topology; instead, we only assume that agents know who their neighbors are. Ling et al.~\cite{Ling15_1J} were able to show the convergence of their distributed method without strong convexity when penalty parameter is chosen sufficiently large; however, no rate has been shown for this setting -- again, as in~\cite{chang2015multi} determining whether the parameter is large enough requires the global knowledge of $\sigma_{\min}(\Omega)$. The algorithm in~\cite{Ling15_1J} is similar to our DPGA algorithm, and in contrast to sublinear rate result shown in this manuscript for the convex setting, Ling et al. were able to establish a rate result only under strong convexity assumption. Bianchi et al.~\cite{bianchi2014stochastic} also proposed a distributed algorithm based on linearized ADMM for solving \eqref{eq:central_opt}, where each node computes proximal gradient steps; the authors proved its convergence and also showed almost sure convergence for its randomized version, where a random set of agents become active and compute their proximal gradient steps and broadcast the recently updated local variables to their neighbors. However, no rate has been shown for neither the deterministic nor the randomized versions. The methods in~\cite{bianchi2014stochastic} run on edge-based formulations of the decentralized problem;
consequently, information exchange, computational effort and memory requirement are far more expensive than node-based algorithms proposed in this paper. {While we are finalizing our paper, we become aware of the work~\cite{hong2015stochastic}, 
where the authors also develop a distributed algorithm
based on linearized ADMM for solving \eqref{eq:central_opt} over both random and static networks and they attain similar rate results to ours. For the static network setting, their algorithm achieves $\cO(1/t)$ rate using deterministic gradient and $\cO(1/\sqrt{t})$ rate using the stochastic gradient; however, in contrast to our results, these rates are established assuming \emph{bounded} domain for all $\xi_i$ (for both deterministic and stochastic gradient settings); explicit bounds for suboptimality and infeasibility are not separately provided; and when the gradient is noisy, their algorithm does not have a compact characterization using only primal local decisions (see Theorem~4.2 and Algorithm~1 in~\cite{hong2015stochastic}) -- even if the network is static, in case the gradient is noisy, one needs to use Algorithm~1 which requires updating edge-variables and explicitly computing the dual variables, while our algorithm SDPGA using stochastic gradient is in a compact form updating \emph{only} primal node-variables, does not explicitly compute the dual iterates and still achieves $\cO(1/\sqrt{t})$ rate without assuming compact domain for any $\xi_i$.}

{
The focus of our paper is on synchronous computation over undirected static communication topology; that said, there has been work considering more general settings, e.g., see~\cite{tsianos2012push,nedic2015distributed} for distributed optimization on directed graphs, and~\cite{lobel2011distributed} for computation over random networks.}

\begin{table}[htbp]
\centering
\vspace{-1mm}
\caption{\small Comparison on data amount stored and communicated per node per communication round}
\resizebox{0.85\columnwidth}{!}{
\begin{tabular}{l c c c c c c}
\toprule
 & DPGA & DPGA-W & ADMM& SADMM & DFAL & PG-EXTRA  \\
\midrule
Storage & $3n$ & $3n$  & $3n$ & $7n$ & $4n$ & $4n$ or $(4+d_i)n$ \\
Communication & $n$ & $2n$ & $2n$ & $4n$ & $n$ & $2n$ or $(n)$  \\
\bottomrule
\end{tabular}
}
\label{tab:str_commu}
\vspace{-2mm}
\end{table}
To sum up, unlike two-loop methods, e.g., \cite{icml2015_aybat15,jakovetic2011fast,chen2012fast}, DPGA algorithms proposed here have only single-loop, and they are \emph{very easy} to implement - see Fig.~\ref{alg:DPGA-II} and Fig.~\ref{alg:DPGA-I}. These surprisingly simple algorithms can compute an $\epsilon$-feasible and $\epsilon$-optimal solution to \eqref{eq:central_opt} within $\cO(1/\epsilon)$ communication rounds among neighboring nodes for all $\epsilon>0$ (this is the best rate known for our setting) under much weaker assumptions on $\xi_i$ and $f_i$ compared to DFAL and with much simpler set of instructions compared to all the algorithms discussed above. To the best of our knowledge, in terms of storage and communication requirements per communication round, and convergence rate in terms of communication rounds, DPGA achieves the best guarantees known in the literature for problem~\eqref{eq:central_opt} when $\Phi_i$ is as in \eqref{eq:F_i} for $i\in\cN$.
\section{Proximal Gradient ADMM and its connections} 
Let $\cN=\{1,\ldots,N\}$, and $\{\Phi_i\}_{i\in\cN}$ be a collection of composite convex functions satisfying Assumption~\ref{assumption-0}.
Let $g:\reals^{n_y}\rightarrow\reals\cup\{+\infty\}$ be a possibly non-smooth convex function, and $f:\reals^{n_x}\rightarrow\reals$ denote $f(\bx)=\sum_{i\in\cN}f_i(x_i)$, where $n_x\triangleq\sum_{i\in\cN}n_i$ and $\bx = [x_i]_{i\in\cN}\in\reals^{n_x}$. 
We adopt the notation $\bx = [x_i]_{i\in\cN}=(x_i; x_{-i})\in\reals^{n_x}$ with $x_i\in\reals^{n_i}$ and $x_{-i} = [x_j]_{j\in\cN\setminus\{i\}}\in \reals^{n_x-n_i}$ to denote a vector where $x_i$ and $x_{-i}$ are treated as variable and parameter sub-vectors of $\bx$, respectively; and $\grad_{x_i}f(\bx)\in\reals^{n_i}$ denotes the sub-vector of $\grad f(\bx)\in\reals^{n_x}$ corresponding to components
of $x_i\in\reals^{n_i}$. In this section we consider the following problem: \vspace*{-1mm} 
{\small
\begin{align}
\label{prob}
\min_{\bx, y} \quad F(\bx, y)\triangleq g(y)+\sum_{i=1}^N \Phi_i(x_i) \quad
\text{s.t.} \quad A_i x_i + B_i y = b_i:\ \lambda_i, \quad i\in\cN,
\end{align}}%
where $A_i\in\reals^{m_i\times n_i}$, $B_i\in\reals^{m_i\times n_y}$, $b_i\in\reals^{m_i}$ for all $i\in\cN$, and $\lambda_i\in\reals^{m_i}$ denotes the dual variable corresponding to the $i$-th constraint. We assume that 
$\{\xi_i\}_{i\in\cN}$ and $g$ have easy prox maps. {Given $\{\gamma_i\}_{i\in\cN}\subset\reals_{+}$}, define the augmented Lagrangian function as \vspace*{-2mm}
{\small
\[\LCal_\gamma(\bx,y,\blambda) = g(y) + \sum_{i=1}^N \xi_i(x_i) + \phi_\gamma(\bx,y,\blambda),\vspace*{-2mm}\]}%
where $\blambda=[\lambda_i]_{i\in\cN}$, and $\phi_\gamma$ denotes the smooth part of the augmented Lagrangian, i.e., \vspace*{-1mm}
{\small
\[\phi_\gamma(\bx,y,\blambda) = \sum_{i=1}^N f_i(x_i) + \sum_{i=1}^N \lambda_i^\top (A_i x_i + B_i y - b_i) + \frac{1}{2}\sum_{i=1}^N {\gamma_i}~\|A_i x_i + B_i y - b_i\|^2.\]}%
Note setting {$\gamma_i=0$ for all $i\in\cN$}, $\LCal_\gamma$ becomes the standard Lagrangian function $\LCal(\bx,y,\blambda)$.

Consider the algorithm PG-ADMM stated below for solving \eqref{prob}: for $k \geq 0$ compute \vspace*{-1mm}
{\small
\begin{subequations}\label{GADM-extension}
\begin{align}
x_i^{k+1} & \triangleq  \prox{c_i\xi_i} \Big(x_i^k - c_i\nabla_{x_i}\phi_\gamma(x_i^k,y^k,\blambda^k) \Big), \quad i\in\cN \label{eq:PGADMM-x}\\
y^{k+1}  & \triangleq  \text{$\argmin_{y}$} \ \LCal_\gamma(x^{k+1},y,\blambda^k) \label{eq:PGADMM-y}\\
\lambda_i^{k+1} & \triangleq  \lambda_i^k + {\gamma_i}~(A_i x_i^{k+1} + B_i y^{k+1} - b_i), \quad i\in\cN \label{eq:PGADMM-lambda}
\vspace*{-2mm}
\end{align}
\end{subequations}}%
where $c_i>0$ is the gradient step size for $i\in\cN$, which should be related to $L_i$ and $\norm{A_i}$. In Section~\ref{sec:deterministic}, we study the convergence properties of \eqref{GADM-extension} given a deterministic first-order oracle which returns $\grad f_i$; and we also consider the effect of using stochastic first-order oracle, which returns noisy observations of $\grad f_i$, on the convergence rate.

\begin{definition}
Given $f:\reals^n\rightarrow\reals\cup\{+\infty\}$ such that $\dom f$ is open, and $\grad f$ exists on $\dom f$. $G:\reals^n\times\Xi\rightarrow\reals^n$ is called a stochastic first-order oracle (SFO) for $\grad f$ if there exists $\sigma\in[0,+\infty)$ such that for all $x\in\dom f$,
$\mathbb{E}_\nu \big[  G(x,\nu) \big] = \grad f(x), \quad \text{and} \quad \mathbb{E}_\nu \big[ \norm{G(x,\nu) - \grad f(x)}^2 \big] \leq \sigma^2$,
where $\nu\in\Xi$ is a random variable following a certain distribution.
\end{definition}
\begin{definition}
\label{eq:def:SFO-II}
Let $G_i$ denote an SFO for $\grad f_i$ for $i\in\cN$ with common parameter $\sigma$. Let {$\bar{G}_i$} be an SFO for $\grad_{x_i} \phi_{\gamma}$ 
defined as
{$\bar{G}_i(x_i,\nu_i; x_{-i}, y, \blambda) = G_i(x_i,\nu_i) + A_i^\T \left(\lambda_i + \gamma_i (A_ix_i + B_i y -b_i)\right)$.}
\end{definition}
\begin{assumption}\label{assumption-2} $\{\nu_i^k\in\Xi: i\in\cN,\ k\in\integers_+\}$ is a set of 
i.i.d. random variables.\end{assumption}

Under Assumption~\ref{assumption-2}, consider the algorithm SPG-ADMM stated below for solving \eqref{prob}: \vspace*{-1mm}
\begin{align}
\label{GADM-stoc-extension}
x_i^{k+1} & \triangleq  \prox{c_i^k\xi_i} \Big(x_i^k - c^k_i{\bar{G}_i}(x_i^k,\nu_i^k;x_{-i}^{k},y^k,\blambda^k) \Big), \; i\in\cN,\quad \eqref{eq:PGADMM-y},\ \eqref{eq:PGADMM-lambda}
\end{align}
for $k \geq 0$, where $c_i^k>0$ is the stochastic-gradient step size for $i\in\cN$ at the $k$-th iteration.
\begin{remark}
In the extreme case that $\xi_i=0$ for $i\in\cN$ and $|\cN|=1$, PG-ADMM \eqref{GADM-extension} and SPG-ADMM \eqref{GADM-stoc-extension} reduce to G-ADMM and SG-ADMM that take gradient steps for $x_i$-subproblems and have been studied in \cite{Ma-Zhang-EGADM-2013} and \cite{Gao-Jiang-Zhang-2014}. Specifically, \cite{Gao-Jiang-Zhang-2014} proves the $O(1/t)$ convergence rate of G-ADMM and $O(1/\sqrt{t})$ convergence rate of SG-ADMM. Our PG-ADMM and SPG-ADMM can be viewed as extensions of G-ADMM and SG-ADMM where general $\xi_i$'s are allowed. Our proofs of convergence rate results for PG-ADMM and SPG-ADMM in Section~\ref{sec:deterministic} are 
inspired by \cite{Gao-Jiang-Zhang-2014}.
\end{remark}
\noindent \emph{Preliminaries and Simple Identities:}
In our analysis, we used two well-known identities frequently: \vspace*{-5mm}
{\small
\begin{eqnarray}
 (v_{2}-v_{1})^\top Q(v_{3}-v_{1}) &=& \frac{1}{2}\left(\|v_{2}-v_{1}\|_Q^{2}+\|v_{3}-v_{1}\|_Q^{2}-\|v_{2}-v_{3}\|_Q^{2}\right), \label{identity-3}\\
 (v_1-v_2)^{\top}Q(v_3-v_4) &=& \frac{1}{2}\left(\|v_1-v_4\|_Q^{2}-\|v_1-v_3\|_Q^{2}\right)+\frac{1}{2}\left(\|v_2-v_3\|_Q^{2}-\|v_2-v_4\|_Q^{2}\right). \label{identity-4}
\end{eqnarray}
}%
For simplicity, we adopted the following notation to denote the stacked vectors or tuples:
$\bx=[x_1^\top,\ldots,x_N^\top]^\top$, $\bx^k=[{x_1^k}^\top,\ldots,{x_N^k}^\top]^\top$, and $\bx^*=[{x_1^*}^\top,\ldots,{x_N^*}^\top]^\top$. Let
$u=[\bx^\top y^\top]^\top$, $\lambda=[\lambda_1^\top\ldots \lambda_N^\top ]^\top$, and $w=[u^\top \lambda^\top]^\top$; and let $u^k$, $u^*$, $\lambda^k$, $\lambda^*$, $w^k$, $w^*$ be defined similarly. For the sake of simplifying the notational burden, we also adopted $(\bx,y,\lambda)\triangleq[\bx^\top,y^\top,\lambda^\top]^\top$. Let $m\triangleq\sum_{i\in\cN}m_i$, $n_x\triangleq\sum_{i\in\cN}n_i$, and $F:\reals^{n_x+n_y}\rightarrow\reals\cup\{+\infty\}$ such that $F$ denotes the objective function of problem \eqref{prob}, i.e., $F(u)\triangleq g(y)+ \sum_{i=1}^N  \Phi_i(x_i)$.
\begin{definition}
\label{def:optimal_set}
We denote the set of optimal primal-dual pairs for \eqref{prob} as $\chi^*$, i.e., $w^*=(\bx^*,y^*,\lambda^*)\in\chi^*$ if and only if $w^*$ is a saddle point of the Lagrangian function $\cL$ corresponding to \eqref{prob}. {Moreover, for $i\in\cN$, define $\chi_i^*$ as the projection of $\chi^*$ onto $x_i$-coordinate.}
\end{definition}
{We analyze 
SPG-ADMM, stated 
in~\eqref{GADM-stoc-extension}, under Assumptions~\ref{assumption-2} and \ref{assumption-1}; and specialize these results for PG-ADMM stated in \eqref{GADM-extension}.} The following assumption is made in the rest of the discussions.
\begin{assumption}\label{assumption-1} The optimal primal-dual pair set $\chi^*$ for \eqref{prob} is non-empty, i.e., there exists $(\bx^*,y^*,\lambda^*)\in\chi^*$ such that $\cL(\bx,y,\lambda^*)\geq\cL(\bx^*,y^*,\lambda^*)\geq \cL(\bx^*,y^*,\lambda)$ for all $\bx$, $y$, $\lambda$. \end{assumption}
According to the first-order optimality conditions for \eqref{prob}, solving \eqref{prob} is equivalent to finding $(\bx^*,y^*,\lambda^*)\in\chi^*$
such that the following holds: \vspace*{-1mm}
{\small
\begin{equation}\label{kkt}
\left\{
\begin{array}{l}
- A_i^\top\lambda_i^* - \nabla f_i(x_i^*) \in \partial\xi_i(x_i^*), \quad i\in\cN, \\
- \sum_{i=1}^N B_i^\top\lambda_i^* \in \partial g(y^*), \\
A_i x_i^* + B_i y^* - b_i = 0, \quad i\in\cN.
\end{array}
\right.
\end{equation}}%
\subsection{Convergence Rate of PG-ADMM and SPG-ADMM} 
\label{sec:deterministic}
Note PG-ADMM and SPG-ADMM produce the same iterate sequence with probability 1 when $\sigma=0$ in Definition~\ref{eq:def:SFO-II}. Therefore, we will first analyze SPG-ADMM, and then derive the bounds for PG-ADMM by sharpening the SPG-ADMM bounds for the case $\sigma=0$.
After some constant terms 
are discarded, SPG-ADMM for solving \eqref{prob} can be stated as: for $k\geq 0$, \vspace*{-1mm}
{\small
\begin{subequations}\label{spg-admm-update}
\begin{eqnarray}
x_i^{k+1} &\triangleq& \prox{{c_i^k}\xi_i} \Big( x_i^k - {c_i^k}\Big[G_i(x_i^k,\nu_i^k) + {\gamma_i} A_i^\top\big(A_i x_i^k + B_i y^k - b_i+{\frac{1}{\gamma_i}}\lambda_i^k\big)\Big]\Big), \quad i\in\cN, \label{update-x}\\
y^{k+1} &\triangleq& \argmin_y \ g(y) +\tfrac{1}{2}\sum\limits_{i=1}^N {\gamma_i}~\left\|  A_i x_i^{k+1} + B_i y - b_i + \frac{1}{\gamma_i}\lambda_i^k\right\|^2,  \label{update-y}\\
\lambda_i^{k+1} &\triangleq& \lambda_i^k +~ {\gamma_i} \left( A_i x_i^{k+1} + B_i y^{k+1} - b_i\right), \quad i\in\cN. \label{update-lambda}
\end{eqnarray}
\end{subequations}
}%
The first-order optimality conditions for \eqref{update-x}-\eqref{update-y} are given respectively as follows \vspace*{-1mm}
{\small
\begin{subequations}\label{alg-opt-conditions}
\begin{align}
\partial\xi_i(x_i^{k+1}) & \ni \frac{1}{c_i^k}\left(x_i^k - x_i^{k+1}\right)- \left[G_i(x_i^k,\nu_i^k)+A_i^\top \left(\lambda_i^k+\gamma_i \left(A_i x_i^k + B_i y^k - b_i\right)\right)\right], \quad i\in\cN \label{opt-x} \\
\partial g(y^{k+1}) &\ni -\sum\limits_{i=1}^N B_i^\top\left(\lambda_i^k + \gamma_i \left( A_i x_i^{k+1} + B_i y^{k+1} - b_i \right)\right). \label{opt-y}
\end{align}
\end{subequations}
}%
Since $\xi_i$ and $g$ are convex, 
using the subgradients above 
with \eqref{update-lambda}, we have \vspace*{-1mm}
{\small
\begin{align}
\xi_i(x_i)- \xi_i(x_i^{k+1}) &\geq \left( x_i-x_i^{k+1} \right)^\top\Bigg[ \frac{1}{c_i^k}\left(x_i^k - x_i^{k+1}\right) \nonumber\\ &\quad -\Bigg(G_i(x_i^k,\nu_i^k)+ A_i^\top\lambda_i^{k+1}+ \gamma_i A_i^\top \Big(A_i (x_i^k-x_i^{k+1}) + B_i (y^k-y^{k+1})\Big)\Bigg) \Bigg] , \label{opt-x-lambda} \\
g(y)-  g(y^{k+1})  &\geq  - \left( y - y^{k+1} \right)^\top \sum\limits_{i=1}^N B_i^\top\lambda_i^{k+1}. \label{opt-y-lambda}
\end{align}
}%
for any $x_i\in\reals^{n_i}$ and $y\in\reals^{n_y}$.
\begin{definition}
For some given $\{c_i^k\}_{k\geq 0}\subset\reals_{++}$, let $Q_i^k\triangleq I_{n_i}-\gamma_ic_i^kA_i^\top A_i$, and $\delta_i^{k}\triangleq G_i(x_i^k,\nu_i^k) - \grad_i f_i(x_i^k)$ denote the error in the noisy gradient generated by SFO for $i\in\cN$. Define $D \triangleq \max_{i \in \cN} \sup_{x_i, x_i' \in \dom(\xi_i)  }  \norm{x_i - x_i'}$ and {$D^*(\bx)\triangleq \max_{i \in \cN} \sup_{x'_i \in \chi^*_i  }  \norm{x'_i - x_i}$ for $\bx\in\reals^{n_x}$.}
\end{definition}
{We analyze the convergence of SPG-ADMM when either $D<+\infty$ or $D^*(\bx^0)<\infty$.} 
\begin{lemma}\label{technical-lemma}
Starting from given $w^0$ such that {$-\sum_{i=1}^NB_i^\top\lambda_i^0\in\partial g(y^0)$}, Let 
$\{w^k\}_{k\geq 1}$ be the SPG-ADMM iterate sequence generated as in \eqref{update-x}-\eqref{update-lambda}. 
If $\{c_i^k\}_{k\geq 0}$ is chosen such that $c_i^k\leq \left(L_i + {\gamma_i}\norm{A_i}^2+\mathds{1}(\delta_i^k)(1+\sqrt{k})\right)^{-1}$ for all $i\in\cN$ and $k\geq 0$, then for any $\lambda$ and $u^*=(\bx^*,y^*)\in\chi^*$, the following inequality holds for all $k\geq 0$,
{\small
\begin{align}
&F(u^*) - F(u^{k+1}){+\sum_{i=1}^N\fprod{\lambda_i,~b_i-A_ix_i^{k+1}-B_iy^{k+1}}}\nonumber\\
&+ \frac{1}{2}\sum_{i=1}^N \Bigg[ \frac{1}{c_i^k}\left\| x_i^*-x_i^k \right\|_{Q_i^k}^2 - \frac{1}{c_i^k}\left\| x_i^*-x_i^{k+1} \right\|_{Q_i^k}^2+ \gamma_i \left\| B_i (y^k - y^*) \right\|^2 - \gamma_i \left\|B_i (y^{k+1} - y^*) \right\|^2\nonumber \\
& + \frac{1}{\gamma_i}\left\| \lambda_i - \lambda_i^k\right\|^2 - \frac{1}{\gamma_i}\left\| \lambda_i - \lambda_i^{k+1} \right\|^2 + 2(x^*_i-x_i^k)^\top\delta_i^k+\frac{\norm{\delta_i^k}^2}{1+\sqrt{k}}\Bigg] \nonumber \\
\geq\ & \frac{1}{2}\sum_{i=1}^N \Bigg[ {\norm{x_i^{k+1}-x_i^k}_{\bar{Q}_i^k}^2+ \gamma_i\norm{B_i(y^{k+1}-y^{k})}^2+\frac{1}{\gamma_i}\norm{\lambda_i^{k+1}-\lambda_i^{k}}^2\Bigg]}, \label{inequality-lemma}
\end{align}
}%
where 
$\mathds{1}(\delta_i^k)$ is $1$ if $\norm{\delta_i^k}>0$, and equal to $0$ otherwise, i.e., when $\delta_i^k=0$; and $\bar{Q}_i^k\triangleq\frac{1}{c_i^k}Q_i^k-\big(L_i+\mathds{1}(\delta_i^k)(1+\sqrt{k})\big)$ for $k\geq 0$ and $i\in\cN$.
\end{lemma}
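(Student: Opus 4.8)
The plan is to establish \eqref{inequality-lemma} as an essentially algebraic consequence of three elementary lower bounds, combined with the identities \eqref{identity-3} and \eqref{identity-4}, following the template of \cite{Gao-Jiang-Zhang-2014}. First I would record the three building blocks evaluated at the optimal primal pair $u^*=(\bx^*,y^*)$: (i) the subgradient inequality \eqref{opt-x-lambda} for $\xi_i$ with test point $x_i=x_i^*$; (ii) the subgradient inequality \eqref{opt-y-lambda} for $g$ with test point $y=y^*$; and (iii) a convexity-plus-descent bound for the smooth part, $f_i(x_i^*)-f_i(x_i^{k+1})\geq \grad f_i(x_i^k)^\top(x_i^*-x_i^{k+1})-\tfrac{L_i}{2}\norm{x_i^{k+1}-x_i^k}^2$, obtained by adding the convexity inequality of $f_i$ at $x_i^k$ to the descent lemma for the $L_i$-smooth $f_i$. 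Summing (i) and (iii) over $i\in\cN$ together with (ii) yields a lower bound for $F(u^*)-F(u^{k+1})$. Crucially, the gradient in (i) is the noisy $G_i(x_i^k,\nu_i^k)=\grad f_i(x_i^k)+\delta_i^k$, so when combined with the exact gradient in (iii) the smooth gradients cancel, leaving only the error cross term $\sum_i(x_i^{k+1}-x_i^*)^\top\delta_i^k$.

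Next I would reorganize the resulting right-hand side into telescoping and progress pieces. The coefficient of $(x_i^*-x_i^{k+1})$ against $(x_i^k-x_i^{k+1})$ is $\tfrac{1}{c_i^k}I-\gamma_iA_i^\top A_i=\tfrac{1}{c_i^k}Q_i^k$, which fuses the proximal-step and penalty curvatures; applying \eqref{identity-3} to $\tfrac{1}{c_i^k}(x_i^*-x_i^{k+1})^\top Q_i^k(x_i^k-x_i^{k+1})$ produces the $Q_i^k$-weighted telescoping distances in $x_i$ plus the progress term $\tfrac{1}{c_i^k}\norm{x_i^{k+1}-x_i^k}_{Q_i^k}^2$. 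For the multiplier terms I would split $\lambda_i^{k+1}=\lambda_i+(\lambda_i^{k+1}-\lambda_i)$: using feasibility $A_ix_i^*+B_iy^*=b_i$ from \eqref{kkt}, the $\lambda_i$ part furnishes exactly the $\fprod{\lambda_i,\,b_i-A_ix_i^{k+1}-B_iy^{k+1}}$ term that I move to the left-hand side, while the dual update \eqref{update-lambda}, rewritten as $A_i(x_i^{k+1}-x_i^*)+B_i(y^{k+1}-y^*)=\tfrac{1}{\gamma_i}(\lambda_i^{k+1}-\lambda_i^k)$, turns the $(\lambda_i^{k+1}-\lambda_i)$ part, via \eqref{identity-3}, into the $\tfrac{1}{\gamma_i}$-telescoping in $\lambda_i$ plus the progress term $\tfrac{1}{\gamma_i}\norm{\lambda_i^{k+1}-\lambda_i^k}^2$.

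The step I expect to be the main obstacle is the mixed $x$--$y$ coupling term $-\sum_i\gamma_i(x_i^*-x_i^{k+1})^\top A_i^\top B_i(y^k-y^{k+1})$. Here I would again substitute $A_i(x_i^{k+1}-x_i^*)=\tfrac{1}{\gamma_i}(\lambda_i^{k+1}-\lambda_i^k)-B_i(y^{k+1}-y^*)$ to convert $A_ix_i$ into $B_iy$ and $\lambda$ differences. The purely-$B_iy$ piece yields, through \eqref{identity-3}, the $\gamma_i$-telescoping in $B_iy$ and the progress term $\gamma_i\norm{B_i(y^{k+1}-y^k)}^2$; the remaining piece $\sum_i(\lambda_i^{k+1}-\lambda_i^k)^\top B_i(y^k-y^{k+1})$ is the genuine obstruction, and I would dispose of it by monotonicity of $\partial g$: applying \eqref{opt-y} at iterations $k$ and $k-1$ for $k\geq1$ — with the hypothesis $-\sum_iB_i^\top\lambda_i^0\in\partial g(y^0)$ supplying the $k=0$ base case — gives $\sum_i(\lambda_i^{k+1}-\lambda_i^k)^\top B_i(y^k-y^{k+1})\geq0$, so this nonnegative quantity can simply be dropped from the lower bound.

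Finally I would treat the error cross term. Writing $(x_i^{k+1}-x_i^*)^\top\delta_i^k=(x_i^{k+1}-x_i^k)^\top\delta_i^k+(x_i^k-x_i^*)^\top\delta_i^k$, I keep the second summand (it furnishes the $2(x_i^*-x_i^k)^\top\delta_i^k$ term inside the bracketed left-hand side, and will later vanish in conditional expectation), and bound the first by Young's inequality with parameter $1+\sqrt{k}$ whenever $\delta_i^k\neq0$: $(x_i^{k+1}-x_i^k)^\top\delta_i^k\geq-\tfrac{1+\sqrt{k}}{2}\norm{x_i^{k+1}-x_i^k}^2-\tfrac{1}{2(1+\sqrt{k})}\norm{\delta_i^k}^2$. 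The $\norm{\delta_i^k}^2$ part is the remaining left-hand term, while the $\norm{x_i^{k+1}-x_i^k}^2$ part is absorbed, together with the $-\tfrac{L_i}{2}\norm{x_i^{k+1}-x_i^k}^2$ from (iii), into the quadratic form $\tfrac{1}{c_i^k}Q_i^k-(L_i+\mathds{1}(\delta_i^k)(1+\sqrt{k}))=\bar{Q}_i^k$. Collecting all pieces reproduces \eqref{inequality-lemma} exactly; the stated stepsize rule $c_i^k\leq(L_i+\gamma_i\norm{A_i}^2+\mathds{1}(\delta_i^k)(1+\sqrt{k}))^{-1}$ is precisely what guarantees $Q_i^k\succeq\mathbf{0}$ and $\bar{Q}_i^k\succeq\mathbf{0}$, so that the weighted-norm notation is well defined and the progress terms on the right are genuinely nonnegative.
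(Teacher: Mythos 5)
Your proposal is correct and follows essentially the same route as the paper's proof: the same three building blocks (the $\xi_i$ and $g$ subgradient inequalities with the noisy gradient, plus convexity-and-descent for $f_i$), the same Young's inequality with parameter $1+\sqrt{k}$ and indicator $\mathds{1}(\delta_i^k)$, the same monotonicity-of-$\partial g$ argument (anchored by the initialization hypothesis at $k=0$) to discard the cross term $\sum_i(\lambda_i^{k+1}-\lambda_i^k)^\top B_i(y^k-y^{k+1})$, and the same telescoping reorganization into $Q_i^k$-, $\gamma_iB_i^\top B_i$-, and $\gamma_i^{-1}$-weighted distances. The only cosmetic difference is that you handle the mixed $x$--$y$ coupling by substituting the dual update plus feasibility and then applying \eqref{identity-3}, whereas the paper applies \eqref{identity-4} first and substitutes afterwards; the resulting algebra is identical.
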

\begin{proof}
Note that \eqref{opt-x-lambda} implies \vspace*{-3mm}
{\small
\begin{align}
& \xi_i(x_i) - \xi_i(x_i^{k+1}) \nonumber\\
& \geq \frac{1}{c_i^k}\left( x_i-x_i^{k+1}\right)^\top Q_i^k\left(x_i^k - x_i^{k+1}\right) - \left( x_i-x_i^{k+1}\right)^\top \Big( G_i(x_i^k,\nu_i^k) + A_i^\top\lambda_i^{k+1} + \gamma_i A_i^\top B_i (y^k-y^{k+1}) \Big) \nonumber \\
& = \frac{1}{2c_i^k}\left\| x_i-x_i^{k+1} \right\|_{Q_i^k}^2 + \frac{1}{2c_i^k}\left\| x_i^k - x_i^{k+1} \right\|_{Q_i^k}^2 - \frac{1}{2c_i^k}\left\| x_i-x_i^k \right\|_{Q_i^k}^2 -\left( x_i-x_i^{k+1} \right)^\top{(\nabla f_i(x_i^k)+\delta_i^k)}
\nonumber \\
& \quad-\left( x_i-x_i^{k+1} \right)^\top A_i^\top\lambda_i^{k+1} - \gamma_i \left( A_i x_i - A_i x_i^{k+1} \right)^\top \left( B_i y^k - B_i y^{k+1}\right), \label{eq:xi-subgradient-ineq}
\end{align}
}%
where the equality follows from identity \eqref{identity-3} with $Q=Q_i^k$, and from the definition of $\delta_i^k$. Since each $f_i$ is convex with Lipschitz continuous $\grad f_i$ for all $x_i\in\dom\xi_i$, we have $0 \leq f_i(x_i)-f_i(x_i^k)-\grad f_i(x_i^k)^\top(x_i-x_i^k)\leq \frac{L_i}{2}\norm{x_i-x_i^k}^2$; hence
{\small
\begin{align}
 \Big( x_i -x_i^{k+1} \Big)^\top\nabla f_i(x_i^k) & =  \left( x_i-x_i^k \right)^\top\nabla f_i(x_i^k) + \left( x_i^k-x_i^{k+1} \right)^\top\nabla f_i(x_i^k) \nonumber \\
& \leq  f_i(x_i) - f_i(x_i^k) + f_i(x_i^k) - f_i(x_i^{k+1}) + \frac{L_i}{2}\left\| x_i^k-x_i^{k+1} \right\|^2 \nonumber\\
& =  f_i(x_i) - f_i(x_i^{k+1}) + \frac{L_i}{2}\left\| x_i^k-x_i^{k+1} \right\|^2. \label{eq:grad-f-ineq}
\end{align}
}%
Moreover, it also trivially follows from the definition of $\mathds{1}(\delta_i^k)$ that
{\small
\begin{align}
\left( x_i-x_i^{k+1} \right)^\top\delta_i^k \leq \left( x_i-x_i^k \right)^\top\delta_i^k+\frac{1}{2(1+\sqrt{k})}\norm{\delta_i^k}^2 +\frac{\mathds{1}(\delta_i^k)(1+\sqrt{k})}{2}\norm{x_i^k-x_i^{k+1}}^2. \label{eq:delta-ineq}
\end{align}
}%
Finally, applying identity \eqref{identity-4} with $Q=I_{m_i}$, $v_1=A_ix_i-b_i$, $v_2=A_ix^{k+1}_i-b_i$, $v_3=-B_iy^{k+1}$, and $v_4=-B_iy^k$, we have \vspace*{-3mm}
{\small
\begin{align}
\big(  A_i &x_i - A_i x_i^{k+1} \big)^\top \left( B_i y^k - B_i y^{k+1}\right) \label{eq:AB-ineq}\\
=  & \; \frac{1}{2}\left(\| A_i x_i + B_i y^k - b_i\|^{2}- \| A_i x_i + B_i y^{k+1} - b_i\|^{2}\right)+ \frac{1}{2}\left(\| A_i x_i^{k+1} + B_i y^{k+1} - b_i \|^{2} - \|  A_i x_i^{k+1} + B_i y^k - b_i\|^{2}\right), \nonumber \\
= & \; \frac{1}{2}\left(\| A_i x_i + B_i y^k - b_i\|^{2} - \| A_i x_i + B_i y^{k+1} - b_i\|^{2}\right)-\frac{1}{2}{\Big(\norm{B_i(y^k-y^{k+1})}^2+\frac{2}{\gamma_i}(\lambda_i^{k+1}-\lambda_i^k)^\top B_i(y^k-y^{k+1})\Big).} \nonumber
\end{align}
}%
Therefore, using \eqref{eq:grad-f-ineq}, \eqref{eq:delta-ineq} and \eqref{eq:AB-ineq} within \eqref{eq:xi-subgradient-ineq}, it follows that
{\small
\begin{align}
\xi_i(&x_i) - \xi_i(x_i^{k+1}) + f_i(x_i) - f_i(x_i^{k+1}) + \left( x_i-x_i^{k+1} \right)^\top A_i^\top\lambda_i^{k+1} +(x_i-x_i^k)^\top\delta_i^k+\frac{\norm{\delta_i^k}^2}{2(1+\sqrt{k})} \label{eq:subgrad-ineq} \\
\geq  &\frac{1}{2c_i^k}\left\| x_i-x_i^{k+1} \right\|_{Q_i^k}^2 - \frac{1}{2c_i^k}\left\| x_i-x_i^k \right\|_{Q_i^k}^2+ \frac{1}{2c_i^k}\left\| x_i^k - x_i^{k+1} \right\|_{Q_i^k}^2 -\frac{L_i+\mathds{1}(\delta_i^k)(1+\sqrt{k})}{2}\left\| x_i^k-x_i^{k+1} \right\|^2\nonumber\\
& + \frac{\gamma_i}{2}\| A_i x_i + B_i y^{k+1} - b_i\|^{2}- \frac{\gamma_i}{2}\| A_i x_i + B_i y^k - b_i\|^{2} + {\frac{\gamma_i}{2}\norm{B_i(y^k-y^{k+1})}^2+(\lambda_i^{k+1}-\lambda_i^k)^\top B_i(y^k-y^{k+1})}, \nonumber
\end{align}
}%
{Since
$0<c_i^k\leq \big(L_i+ \gamma_i\sigma_{\max}^2(A_i)+\mathds{1}(\delta_i^k)(1+\sqrt{k})\big)^{-1}$, by definition $\bar{Q}_i^k\succeq\mathbf{0}$ for all $k\geq 0$ and $i\in\cN$.}
{Moreover, for all $k\geq 0$, we have $\sum_{i=1}^N\fprod{B_i(y^k-y^{k+1}),~\lambda_i^{k+1}-\lambda_i^k}\geq 0$ since $-\sum_{i=1}^NB_i^\top\lambda_i^k\in\partial g(y^k)$.}
Therefore, setting $u=u^{*}$, combining \eqref{update-lambda} and \eqref{opt-y-lambda} 
with the inequality in \eqref{eq:subgrad-ineq}, and using the definition of $\bar{Q}_i^k$ implies that for any $\lambda\in\reals^m$,
{\small
\begin{align}
&F(u^*) - F(u^{k+1})\nonumber\\
& + \sum_{i=1}^N(x_i^*-x_i^{k+1})^\top A_i^\top\lambda_i^{k+1}+(y^*-y^{k+1})^\top \sum_{i=1}^N B_i^\top\lambda_i^{k+1} +\sum_{i=1}^N(\lambda_i-\lambda_i^{k+1})^\top(b_i-A_ix_i^{k+1}-B_iy^{k+1}) \nonumber \\
& + \frac{1}{2}\sum\limits_{i=1}^N \Bigg[ - \frac{2}{\gamma_i}\left( \lambda_i - \lambda_i^{k+1}\right)^\top \left( \lambda_i^{k} - \lambda_i^{k+1} \right)+ \frac{1}{c_i^k}\left\| x_i^*-x_i^k \right\|_{Q_i^k}^2 - \frac{1}{c_i^k}\left\| x_i^*-x_i^{k+1} \right\|_{Q_i^k}^2 \nonumber \\
& + \gamma_i \left\| A_i x_i^* + B_i y^k - b_i \right\|^2 - \gamma_i \left\| A_i x_i^* + B_i y^{k+1} - b_i \right\|^2+2(x^*_i-x_i^k)^\top\delta_i^k+\frac{\norm{\delta_i^k}^2}{1+\sqrt{k}}\Bigg] \nonumber\\
\geq\ & \frac{1}{2}\sum_{i=1}^N \Bigg[{\norm{x_i^{k+1}-x_i^k}_{\bar{Q}_i^k}^2+\gamma_i\norm{B_i(y^{k+1}-y^k)}^2\Bigg]}. \label{inequality-main}
\end{align}
}%
Using \eqref{identity-3} with $Q=I_{m_i}$, we have
{\small
\begin{align*}
\frac{1}{2\gamma_i} \Big \|\lambda_i^{k+1} - \lambda_i^k \Big \|^2 - \frac{1}{\gamma_i}\left(\lambda_i-\lambda_i^{k+1}\right)^{\top}\left( \lambda_i^{k} - \lambda_i^{k+1} \right) = \frac{1}{2\gamma_i}\left(\left\|\lambda_i-\lambda_i^k \right\|^2 - \left\|\lambda_i-\lambda_i^{k+1}\right\|^2\right).
\end{align*}
}%
{Thus, adding $\sum_{i=1}^N\frac{1}{2\gamma_i}\left\| \lambda_i^{k+1}-\lambda_i^k \right\|^2$ to both side of \eqref{inequality-main}, and using the above equality}, we obtain the desired inequality in~\eqref{inequality-lemma}. Indeed, since $u^*$ is a solution to \eqref{prob}, $A_ix_i^*+B_iy^*=b_i$ for $i\in\cN$. Hence, for any $\lambda$ and $\hat{w}$ \vspace*{-2mm}
{\small
\[\sum_{i=1}^N\lambda_i^\top(b_i-A_i\hat{x}_i-B_i\hat{y})=\sum_{i=1}^N(x^*_i-\hat{x}_i)^\top A_i^\top\hat{\lambda}_i+ (y^*-\hat{y})^\top\sum_{i=1}^NB_i^\top\hat{\lambda}_i
+\sum_{i=1}^N(\lambda_i-\hat{\lambda}_i)^\top(b_i-A_i\hat{x}_i-B_i\hat{y}).\]
}%
We use the above equality with $\hat{w}=w^{k+1}$ to simplify \eqref{inequality-main}.
\end{proof}
\begin{theorem}
\label{lem:ergodic}
Under Assumptions~\ref{assumption-0},~\ref{assumption-2} and~\ref{assumption-1}, let $\{w^k\}_{k\geq 1}$ be the SPG-ADMM iterate sequence generated as in \eqref{update-x}-\eqref{update-lambda} starting from given
$w^0=\left(\bx^0,y^0,\lambda^0\right)$ {as in Lemma~\ref{technical-lemma}}.
Define $\bar{u}^t = (\bar{\bx}^t, \bar{y}^t)$ as
$\bar{\bx}^t = \frac{1}{t}\sum_{k=1}^{t}\bx^{k}$, and $\bar{y}^t = \frac{1}{t}\sum_{k=1}^{t} y^{k}$ for $t\geq 1$.
Fix arbitrary $\lambda$ and $u^*=(\bx^*,y^*)\in\chi^*$.

Suppose $\sigma>0$ and $D<\infty$. For $i\in\cN$, when $\{c_i^k\}_{k\geq 0}$ is chosen such that $\frac{1}{c_i^k}=\frac{1}{c_i}+\sqrt{k}$ and $c_i=(L_i + \gamma_i\norm{A_i}^2+1)^{-1}$, then the following bounds hold for $\bar{D}=D$ and for all $t\geq 1$:\vspace*{-2mm}
{\small
\begin{eqnarray}
\lefteqn{{\mathbb{E}\Big[ F\left(\bar{u}^{t}\right) - F(u^*) + \sum\limits_{i=1}^N \lambda_i^\top \left( A_i\bar{x}_i^{t} + B_i\bar{y}^{t} - b_i \right)\Big]}} \label{eq:SPG-key-ineq}\\
&\leq &\frac{N(\bar{D}^2+2\sigma^2)}{2\sqrt{t}}+\frac{1}{2t} \sum\limits_{i=1}^N \left( \frac{1}{\gamma_i}\left\| \lambda_i - \lambda_i^0 \right\|^2 + \frac{1}{c_i}\left\| x_i^*-x_i^0 \right\|_{I- \gamma_i c_i A_i^\top A_i}^2 \right)+\frac{1}{2t} \left\| y^* - y^0 \right\|_{\sum_{i=1}^N\gamma_i B_i^\top B_i}^2. \nonumber
\end{eqnarray}
}%
Suppose $\sigma=0$, i.e., $G_i(x_i)=\grad f_i(x_i)$ w.p.1 for any $x_i$ and $i\in\cN$. When $c_i^k=c_i$ for all $k\geq 0$ for some $c_i\in(0,\frac{1}{L_i + \gamma_i\norm{A_i}^2}]$, the following bound holds w.p.1 for all $t\geq 1$, \vspace*{-2mm}
{\small
\begin{eqnarray}
\lefteqn{F\left(\bar{u}^{t}\right) - F(u^*) + \sum\limits_{i=1}^N \lambda_i^\top \left( A_i\bar{x}_i^{t} + B_i\bar{y}^{t} - b_i \right)} \label{eq:PG-key-ineq} \\
& \leq & \frac{1}{2t} \sum\limits_{i=1}^N \Big( \frac{1}{\gamma_i}\left\| \lambda_i - \lambda_i^0 \right\|^2 + \frac{1}{c_i}\left\| x_i^*-x_i^0 \right\|_{I_n- \gamma_i c_i A_i^\top A_i}^2\Big) + \frac{1}{2t} \left\| y^* - y^0 \right\|_{\sum_{i=1}^N\gamma_i B_i^\top B_i}^2. \nonumber
\end{eqnarray}}%
\end{theorem}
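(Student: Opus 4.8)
The plan is to start from the per-iteration inequality~\eqref{inequality-lemma} of Lemma~\ref{technical-lemma}, discard its nonnegative right-hand side (legitimate since $\bar{Q}_i^k\succeq\mathbf{0}$ under the stated step-size rule), sum over $k=0,\ldots,t-1$, and then pass to the ergodic average by convexity. After moving the affine terms, \eqref{inequality-lemma} reads
\[
F(u^{k+1})-F(u^*)+\sum_{i=1}^N\fprod{\lambda_i,\ A_ix_i^{k+1}+B_iy^{k+1}-b_i}\ \le\ \tfrac12\sum_{i=1}^N\big[\,\cdots\,\big].
\]
Summing and dividing by $t$, convexity of $F$ and linearity of the constraint map give, via Jensen's inequality, $F(\bar{u}^t)-F(u^*)+\sum_i\fprod{\lambda_i,\ A_i\bar{x}_i^t+B_i\bar{y}^t-b_i}$ on the left (in the stochastic case I would take expectations first).

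On the right, the dual terms $\tfrac1{\gamma_i}\|\lambda_i-\lambda_i^k\|^2-\tfrac1{\gamma_i}\|\lambda_i-\lambda_i^{k+1}\|^2$ and the $y$-terms $\gamma_i\|B_i(y^k-y^*)\|^2-\gamma_i\|B_i(y^{k+1}-y^*)\|^2$ carry constant weights, so they telescope and, after dropping the nonpositive iteration-$t$ residuals, leave only $\tfrac1{\gamma_i}\|\lambda_i-\lambda_i^0\|^2$ and $\|y^0-y^*\|_{\sum_i\gamma_iB_i^\top B_i}^2$. For the primal drift I would use $\tfrac1{c_i^k}\|v\|_{Q_i^k}^2=\tfrac1{c_i^k}\|v\|^2-\gamma_i\|A_iv\|^2$ to split it into a constant-weight piece in $\|A_i(x_i^*-x_i^k)\|^2$ that telescopes and a variable-weight piece in $\|x_i^*-x_i^k\|^2$ that does not.

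The main obstacle is this variable-weight piece $\sum_{k=0}^{t-1}\tfrac1{c_i^k}\big(\|x_i^*-x_i^k\|^2-\|x_i^*-x_i^{k+1}\|^2\big)$, since $1/c_i^k=1/c_i+\sqrt k$ increases with $k$. I would handle it by summation by parts, rewriting it as $\tfrac1{c_i}\|x_i^*-x_i^0\|^2+\sum_{k=1}^{t-1}(\sqrt k-\sqrt{k-1})\|x_i^*-x_i^k\|^2-\tfrac1{c_i^{t-1}}\|x_i^*-x_i^t\|^2$. The increments $\sqrt k-\sqrt{k-1}$ are nonnegative, so $D<\infty$ gives $\|x_i^*-x_i^k\|^2\le D^2$ and the middle sum telescopes to $D^2\sqrt{t-1}\le D^2\sqrt t$; the leading term recombines with the telescoped $A_i$-piece into $\tfrac1{c_i}\|x_i^*-x_i^0\|_{Q_i^0}^2$; and the last term, paired with $\gamma_i\|A_i(x_i^*-x_i^t)\|^2$, equals $-\|x_i^*-x_i^t\|_{\frac1{c_i^{t-1}}I-\gamma_iA_i^\top A_i}^2\le0$ because the step-size choice forces $\tfrac1{c_i^{t-1}}\ge\tfrac1{c_i}\ge\gamma_i\|A_i\|^2$, so it may be dropped. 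This is exactly the balance that the $\sqrt k$ schedule is designed to produce.

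For the stochastic error terms I would take expectations and use the filtration structure: since $\nu_i^k$ is independent of $x_i^k$ by Assumption~\ref{assumption-2} and the oracle is unbiased, $\mathbb{E}[2(x_i^*-x_i^k)^\top\delta_i^k]=0$, while $\mathbb{E}\|\delta_i^k\|^2\le\sigma^2$ bounds the residual by $\sigma^2\sum_{k=0}^{t-1}\tfrac1{1+\sqrt k}\le2\sigma^2\sqrt t$, using $\tfrac1{1+\sqrt k}\le\tfrac1{\sqrt{k+1}}$ and $\sum_{j=1}^t j^{-1/2}\le2\sqrt t$. Collecting terms and dividing by $t$ yields the $\cO(1/\sqrt t)$ bound~\eqref{eq:SPG-key-ineq} with leading constant $N(D^2+2\sigma^2)/(2\sqrt t)$. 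The deterministic bound~\eqref{eq:PG-key-ineq} then follows as a specialization: when $\sigma=0$ every $\delta_i^k$ vanishes, the step sizes $c_i^k=c_i$ are constant so the primal drift telescopes cleanly with neither a summation-by-parts remainder nor a $D^2\sqrt t$ term, and all inequalities hold surely, giving the sharper $\cO(1/t)$ rate.
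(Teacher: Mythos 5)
Your proposal is correct and follows essentially the same route as the paper's own proof: drop the nonnegative right-hand side of Lemma~\ref{technical-lemma} (valid since $\bar{Q}_i^k\succeq\mathbf{0}$ under the stated step sizes), pass to the ergodic average by Jensen, telescope the constant-weight dual and $y$ terms, split the primal drift into the constant-weight $\gamma_i\|A_i(\cdot)\|^2$ part and the varying-weight $\tfrac{1}{c_i^k}\|\cdot\|^2$ part handled by summation by parts with $\|x_i^*-x_i^k\|^2\le D^2$, and eliminate the noise cross terms via conditioning on the filtration together with $\mathbb{E}\|\delta_i^k\|^2\le\sigma^2$. As a minor point in your favor, your bound $\sum_{k=0}^{t-1}(1+\sqrt{k})^{-1}\le 2\sqrt{t}$ is the one actually consistent with the stated constant $N(\bar{D}^2+2\sigma^2)/(2\sqrt{t})$, whereas the paper's claimed integral estimate of $\sqrt{t}$ for that sum would yield $\sigma^2$ in place of $2\sigma^2$.
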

\begin{proof}
Invoking the convexity of $F(\cdot)$ justifies the first inequality below: \vspace*{-1mm}
{\small
\begin{eqnarray*}
\lefteqn{F(\bar{u}^{t})-F(u^*)+\sum\limits_{i\in\cN} \lambda_i^\top \left( A_i\bar{x}_i^{t} + B_i\bar{y}^{t} - b_i \right)} \\
& \leq & -\frac{1}{t}\sum\limits_{k=0}^{t-1}\left[F(u^*) - F(u^{k+1}) + \sum\limits_{i=1}^N \lambda_i^\top \left( b_i-A_ix_i^{k+1}- B_iy^{k+1} \right)\right] \nonumber\\
& \leq & \frac{1}{2t}\sum\limits_{k=0}^{t-1}\sum\limits_{i=1}^N \left[ \frac{1}{\gamma_i}\left\| \lambda_i - \lambda_i^{k}\right\|^2 - \frac{1}{\gamma_i}\left\| \lambda_i - \lambda_i^{k+1} \right\|^2 + \frac{1}{c_i^k}\left\| x_i^*-x_i^{k} \right\|_{Q_i^k}^2 - \frac{1}{c_i^k}\left\| x_i^*-x_i^{k+1} \right\|_{Q_i^k}^2 \right. \nonumber\\
& & \qquad \qquad \qquad \left.+ \gamma_i \left\| B_i (y^{k}-y^*) \right\|^2 - \gamma_i \left\| B_i (y^{k+1}-y^*) \right\|^2+2(x_i^*-x_i^k)^\top\delta_i^k+\frac{\norm{\delta_i^k}^2}{1+\sqrt{k}} \right]\triangleq\Gamma^t, \nonumber
\end{eqnarray*}
}%
where the second inequality follows from Lemma~\ref{technical-lemma} and the fact that $\bar{Q}_i^k\succeq\mathbf{0}$ for $k\geq 0$.

First, consider the PG-ADMM iterate sequence generated using $G_i(x_i^k,\nu_i^k)=\grad f_i(x_i^k)$, i.e., $\delta_i^k=0$ for all $i\in\cN$ and $k\geq 0$. In this setting, according to Lemma~\ref{technical-lemma}, we are allowed to fix a constant step size $c_i$ for each $i\in\cN$. Indeed, $c_i^k=c_i\triangleq\left(L_i + \gamma_i\norm{A_i}^2\right)^{-1}$ for $k\geq 1$. Hence, one obtains the desired result in \eqref{eq:PG-key-ineq} by showing \vspace*{-2mm}
{\small
\begin{align*}
\Gamma^t \leq \frac{1}{2t} \sum\limits_{i=1}^N \left( \frac{1}{\gamma_i}\left\| \lambda_i - \lambda_i^0 \right\|^2 + \frac{1}{c_i}\left\| x_i^*-x_i^0 \right\|_{I- \gamma_i c_i A_i^\top A_i}^2 + \gamma_i \left\| y^*-y^0\right\|_{B_i^\top B_i}^2 \right),
\end{align*}
}%
which follows from dropping the non-positive terms after the telescoping sum in the definition of $\Gamma^t$ is evaluated, and using the fact that $A_i x_i^*+B_i y^*=b_i$ for all $i\in\cN$. 

Next, consider the SPG-ADMM iterate sequence for which $\delta_i^k>0$ with probability 1 for all $i\in\cN$ and $k\geq 0$. In this case, according to Lemma~\ref{technical-lemma}, if one sets $c_i^k=\left(L_i + \gamma_i\norm{A_i}^2+1+\sqrt{k}\right)^{-1}$ for all $i\in\cN$ and $k\geq 0$, then $F(\bar{u}^{t})-F(u^*)+\sum\limits_{i\in\cN} \lambda_i^\top \left( A_i\bar{x}_i^{t} + B_i\bar{y}^{t} - b_i \right)\leq\Gamma^t$ holds for all $t\geq 1$. Moreover, we also have \vspace*{-1mm}
{\small
\begin{eqnarray}
\lefteqn{\frac{1}{2t}\sum_{k=0}^{t-1}\sum_{i=1}^N\frac{1}{c_i^k}\left\| x_i^*-x_i^{k} \right\|_{Q_i^k}^2 - \frac{1}{c_i^k}\left\| x_i^*-x_i^{k+1} \right\|_{Q_i^k}^2}\nonumber\\
& = & \frac{1}{2t}\left[\sum_{i=1}^N\left(\frac{1}{c_i^0}\left\| x_i^*-x_i^0 \right\|_{I- \gamma_i c_i^0 A_i^\top A_i}^2-\frac{1}{c_i^{t-1}}\left\| x_i^*-x_i^{t} \right\|_{I-\gamma_i c_i^{t-1} A_i^\top A_i}^2+\sum_{k=0}^{{t-2}}\left(\sqrt{k+1}-\sqrt{k}\right)\left\| x_i^*-x_i^{k+1} \right\|^2\right)\right]\nonumber\\
&\leq&\frac{ND^2}{2\sqrt{t}}+\frac{1}{2t}\sum_{i=1}^N\frac{1}{c_i^0}\left\| x_i^*-x_i^0 \right\|_{I- \gamma_i c_i^0 A_i^\top A_i}^2. \label{eq:Gamma_ineq}
\end{eqnarray}
}%
Hence, it follows that \vspace*{-1mm}
{\small
\begin{align}
\Gamma^t \leq &\frac{ND^2}{2\sqrt{t}}+\frac{1}{2t} \sum\limits_{i=1}^N \left( \frac{1}{\gamma_i}\left\| \lambda_i - \lambda_i^0 \right\|^2 + \frac{1}{c_i^0}\left\| x_i^*-x_i^0 \right\|_{I- \gamma_i c_i^0 A_i^\top A_i}^2 + \gamma_i \left\| y^*-y^0\right\|_{B_i^\top B_i}^2\right) \nonumber\\
&\quad +\frac{1}{2t} \sum\limits_{i=1}^N\sum_{k=0}^{t-1}\left(2(x_i-x_i^k)^\top\delta_i^k+\frac{\norm{\delta_i^k}^2}{1+\sqrt{k}}\right). \label{eq:gamma_k}
\end{align}
}%
Let $\nu^\ell=[\nu_i^\ell]_{i\in\cN}$ for $\ell\geq 0$. Note from \eqref{GADM-stoc-extension} it follows that $(\bx^{k},y^{k},\lambda^{k})$ are random variables depending only on $\Upsilon^k\triangleq\{\nu^\ell\}_{\ell=0}^{k-1}$ for all $k\geq 1$; hence, $\mathbb{E}_{\nu^k}[\delta_i^k|\Upsilon^{k}]=\mathbf{0}$ and $\mathbb{E}_{\nu^k}\left[\norm{\delta_i^k}^2|\Upsilon^{k}\right]\leq\sigma^2$ because $\delta_i^{k}\triangleq G_i(x_i^k,\nu_i^k) - \grad_i f_i(x_i^k)$. Therefore, for all $k\geq 0$, we have \vspace*{-1mm}
\begin{align*}
\mathbb{E}_{\Upsilon^{k+1}}\left[(x_i^k)^\top\delta_i^k\right]&=\mathbb{E}_{\Upsilon^{k}}\left[\mathbb{E}_{\nu^k}\left[(x_i^k)^\top\delta_i^k|\Upsilon^{k}\right]\right]
=\mathbb{E}_{\Upsilon^{k}}\left[(x_i^k)^\top\mathbb{E}_{\nu^k}[\delta_i^k|\Upsilon^{k}]\right]=0,\\
\mathbb{E}_{\Upsilon^{k+1}}\left[\norm{\delta_i^k}^2\right]&=\mathbb{E}_{\Upsilon^{k}}\left[\mathbb{E}_{\nu^k}\left[\norm{\delta_i^k}^2|\Upsilon^{k}\right]\right]
\leq\sigma^2.
\end{align*}
We obtain the desired result by taking the expectation of both sides in \eqref{eq:gamma_k}
and using the inequality $\sum_{k=0}^{t-1}\frac{1}{1+\sqrt{k}}\leq\int_{0}^{t-1}\frac{1}{1+\sqrt{s}}ds\leq\sqrt{t}$.
\end{proof}
\vspace*{-2mm}
\begin{corollary}
\label{cor:constant-step}
{Suppose $\sigma>0$ and $D=\infty$. Assume that $D^*(\bx^0)<\infty$. For any $t\geq 1$, let $\{c_i^k\}_{0\leq k\leq t}$ be chosen such that $\frac{1}{c_i^k}=\frac{1}{c_i}+\sqrt{t}$ and $c_i=(L_i + \gamma_i\norm{A_i}^2+1)^{-1}$ for $i\in\cN$, then \eqref{eq:SPG-key-ineq} holds for $\bar{D}=D^*(\bx^0)$.}
\end{corollary}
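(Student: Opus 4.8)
The plan is to rerun the stochastic branch of the proof of Theorem~\ref{lem:ergodic} verbatim, but with the single crucial modification that the step size is held \emph{constant in $k$} over the chosen horizon: for a fixed target $t$ I would set $\tfrac{1}{c_i^k}=\tfrac{1}{c_i}+\sqrt{t}$ for every $0\le k\le t$, with $\tfrac{1}{c_i}=L_i+\gamma_i\norm{A_i}^2+1$. First I would verify that this meets the hypothesis of Lemma~\ref{technical-lemma}: since $\mathds{1}(\delta_i^k)=1$ in the stochastic regime, the requirement is $\tfrac{1}{c_i^k}\ge L_i+\gamma_i\norm{A_i}^2+1+\sqrt{k}$, and this holds because $\sqrt{t}\ge\sqrt{k}$ for all $k\le t$. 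Hence \eqref{inequality-lemma} is valid term-by-term, and exactly as in Theorem~\ref{lem:ergodic}, summing over $k=0,\ldots,t-1$ and invoking convexity of $F$ yields $F(\bar u^{t})-F(u^*)+\sum_{i\in\cN}\lambda_i^\top(A_i\bar x_i^{t}+B_i\bar y^{t}-b_i)\le\Gamma^t$.

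The only step that genuinely changes is the treatment of the $Q_i^k$-weighted distance terms inside $\Gamma^t$. Because $c_i^k$ no longer depends on $k$, we have $Q_i^k\equiv Q_i^t:=I-\gamma_i c_i^t A_i^\top A_i$, so the corresponding sum \emph{telescopes exactly}, with no leftover cross differences, leaving $\tfrac{1}{2t}\sum_{i}\tfrac{1}{c_i^t}\big(\norm{x_i^*-x_i^0}_{Q_i^t}^2-\norm{x_i^*-x_i^t}_{Q_i^t}^2\big)$. Dropping the nonnegative last term (valid since $Q_i^t\succeq\mathbf{0}$) and using the identity $\tfrac{1}{c_i^t}\norm{v}_{Q_i^t}^2=\tfrac{1}{c_i}\norm{v}_{I-\gamma_i c_i A_i^\top A_i}^2+\sqrt{t}\,\norm{v}^2$, I isolate precisely the $\tfrac{1}{2t}\sum_i\tfrac{1}{c_i}\norm{x_i^*-x_i^0}_{I-\gamma_i c_i A_i^\top A_i}^2$ term appearing in \eqref{eq:SPG-key-ineq}, plus a residual $\tfrac{\sqrt t}{2t}\sum_i\norm{x_i^*-x_i^0}^2=\tfrac{1}{2\sqrt t}\sum_i\norm{x_i^*-x_i^0}^2$.

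This residual is where the unbounded-domain difficulty concentrates, and controlling it is the main (indeed the only) obstacle. In Theorem~\ref{lem:ergodic} the analogous $\cO(1/\sqrt t)$ term arose from cross differences $(\sqrt{k+1}-\sqrt{k})\norm{x_i^*-x_i^{k+1}}^2$ and was tamed by the domain diameter $D$; when $D=\infty$ that route is unavailable. The key observation is that with a constant step size the only iterate distance surviving the telescoping is the \emph{initial} one, $\norm{x_i^*-x_i^0}$, which is finite by hypothesis: choosing $x_i^*\in\chi_i^*$ gives $\norm{x_i^*-x_i^0}\le D^*(\bx^0)$, whence $\tfrac{1}{2\sqrt t}\sum_{i\in\cN}\norm{x_i^*-x_i^0}^2\le\tfrac{N\,D^*(\bx^0)^2}{2\sqrt t}$. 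This is exactly the $\bar D=D^*(\bx^0)$ substitution in \eqref{eq:SPG-key-ineq}.

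The remaining pieces carry over unchanged from the stochastic branch of Theorem~\ref{lem:ergodic}. The $\lambda$- and $y$-distance terms telescope as before; the cross terms $2(x_i^*-x_i^k)^\top\delta_i^k$ vanish in expectation by the same martingale argument, since $x_i^k$ depends only on $\Upsilon^k$ and $\mathbb{E}[\delta_i^k\mid\Upsilon^k]=\mathbf{0}$; and $\tfrac{1}{2t}\sum_i\sum_k\tfrac{\norm{\delta_i^k}^2}{1+\sqrt k}$ is bounded in expectation by $\tfrac{N\sigma^2}{2t}\sum_{k=0}^{t-1}(1+\sqrt k)^{-1}\le\tfrac{N\sigma^2}{\sqrt t}$ using $\mathbb{E}[\norm{\delta_i^k}^2]\le\sigma^2$ and $\sum_{k=0}^{t-1}(1+\sqrt k)^{-1}\le 2\sqrt t$. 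Combining the $\bar D^2$ residual with this $2\sigma^2$ contribution reproduces the leading $\tfrac{N(\bar D^2+2\sigma^2)}{2\sqrt t}$ term, so taking expectations in the bound on $\Gamma^t$ delivers \eqref{eq:SPG-key-ineq} with $\bar D=D^*(\bx^0)$, as claimed.
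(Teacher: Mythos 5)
Your proposal is correct and is essentially the argument the paper intends (the corollary is stated without a separate proof, as a direct modification of the stochastic branch of Theorem~\ref{lem:ergodic}): fixing the step size across the horizon makes the $Q_i^k$-weighted sums telescope exactly, so the only surviving iterate distance is $\norm{x_i^*-x_i^0}\leq D^*(\bx^0)$, which takes over the role that the diameter $D$ played in bounding the leftover cross terms $(\sqrt{k+1}-\sqrt{k})\norm{x_i^*-x_i^{k+1}}^2$. Your verification of the Lemma~\ref{technical-lemma} step-size condition via $\sqrt{t}\geq\sqrt{k}$, the splitting $\tfrac{1}{c_i^t}\norm{v}_{Q_i^t}^2=\tfrac{1}{c_i}\norm{v}_{I-\gamma_i c_i A_i^\top A_i}^2+\sqrt{t}\norm{v}^2$, and the handling of the noise terms all match the theorem's bookkeeping, yielding \eqref{eq:SPG-key-ineq} with $\bar{D}=D^*(\bx^0)$.
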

Now we are ready to prove the $O(1/t)$ and $O(1/\sqrt{t})$ convergence rates of PG-ADMM stated in \eqref{GADM-extension} and SPG-ADMM stated in \eqref{GADM-stoc-extension} in an ergodic sense for solving \eqref{prob}.
\begin{theorem}\label{thm-ergodic}
Under the same settings of Theorem~\ref{lem:ergodic}, fix an arbitrary point $(u^*,\lambda_1^*,\ldots,\lambda_N^*) \in\chi^*$.

Suppose $\sigma=0$, i.e., $G_i(x_i)=\grad f_i(x_i)$ w.p.1 for any $x_i$ and $i\in\cN$. 
When $c_i^k=c_i$ for all $k\geq 0$ for some $c_i\in\big(0,~{\frac{1}{L_i + \gamma_i\norm{A_i}^2}}\big)$, the following bounds hold for all $t\geq 1$:
\vspace*{-1mm}
{\small
\begin{equation*}
|F(\bar{u}^t)-F(u^*)| \leq C(c_1,\ldots,c_N)/t,
\quad \mbox{ and } \quad
\sum\limits_{i=1}^N \norm{\lambda_i^*}\left\|  A_i\bar{x}_i^t + B_i\bar{y}^t - b_i \right\|\leq  C(c_1,\ldots,c_N)/t, \vspace*{-3mm}
\end{equation*}
}%
where $C(c_1,\ldots,c_N)\triangleq\sum\limits_{i=1}^N \Big( \frac{1}{\gamma_i}\left(4\|\lambda_i^*\|^2 + \left\| \lambda_i^0\right\|^2 \right) + \frac{1}{2c_i}\left\| x_i^*-x_i^0 \right\|_{Q_i}^2 \Big) + \frac{1}{2}\left\| y^*-y^0\right\|_{\sum_{i=1}^N \gamma_i B_i^\top B_i}^2$.
and $Q_i\triangleq I-c_i\gamma_iA_i^\top A_i$. {Moreover, both $\{w^k\}_{k\geq 0}$ and $\{\bar{w}^k\}_{k\geq 0}$ converge to a primal-dual optimal point if 
$B\triangleq[B_i]_{i\in\cN}$, formed by vertically concatinating $\{B_i\}_{i\in\cN}$, has full column rank.}

Suppose $\sigma>0$. Fix $c_i=(L_i + \gamma_i\norm{A_i}^2+1)^{-1}$ for $i\in\cN$ and set $C\triangleq C(c_1,\ldots,c_N)$. Let $C'\triangleq N(D^2/2+\sigma^2)$, and $D\triangleq \max_{i \in \cN} \sup_{x, x' \in \dom(\xi_i)}\norm{x - x'}$. When $\{c_i^k\}_k$ sequence is chosen as $c_i^k=\left(\frac{1}{c_i}+\sqrt{k}\right)^{-1}$ for $i\in\cN$, the following bounds hold for all $t\geq 1$: \vspace*{-1mm}
{\small
\begin{equation*}
\mathbb{E}[|F(\bar{u}^t)-F(u^*)|] \leq \frac{C'}{\sqrt{t}}+\frac{C}{t}, \quad \mbox{ and } \quad \sum\limits_{i=1}^N \norm{\lambda_i^*}\mathbb{E}\Big[\left\|  A_i\bar{x}_i^t + B_i\bar{y}^t - b_i \right\|\Big]\leq  \frac{C'}{\sqrt{t}}+\frac{C}{t}.
\end{equation*}}%
\end{theorem}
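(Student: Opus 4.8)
The plan is to turn the single ``master'' estimate of Theorem~\ref{lem:ergodic} --- which, for arbitrary $\lambda$ and $u^*=(\bx^*,y^*)\in\chi^*$, controls the \emph{coupled} quantity $F(\bar u^t)-F(u^*)+\sum_{i\in\cN}\fprod{\lambda_i,\,r_i}$ with $r_i\triangleq A_i\bar x_i^t+B_i\bar y^t-b_i$ --- into \emph{separate} bounds on the suboptimality $|F(\bar u^t)-F(u^*)|$ and on the residuals $\{r_i\}$. The only extra ingredient is the saddle-point inequality of Assumption~\ref{assumption-1}: since $\cL(\bx,y,\lambda^*)\geq\cL(\bx^*,y^*,\lambda^*)=F(u^*)$ and $A_ix_i^*+B_iy^*=b_i$, evaluating at $\bar u^t$ yields the pointwise lower bound
\begin{equation*}
F(\bar u^t)-F(u^*)+\sum_{i\in\cN}\fprod{\lambda_i^*,\,r_i}\;\geq\;0 .
\end{equation*}
I treat $\sigma=0$ first (all inequalities hold pathwise) and then lift the argument to $\sigma>0$.

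For the feasibility bound I use \eqref{eq:PG-key-ineq}. Writing its left-hand side as $\big[F(\bar u^t)-F(u^*)+\sum_i\fprod{\lambda_i^*,r_i}\big]+\sum_i\fprod{\lambda_i-\lambda_i^*,r_i}$ and dropping the nonnegative bracket, the right-hand side of \eqref{eq:PG-key-ineq} bounds $\sum_i\fprod{\lambda_i-\lambda_i^*,r_i}$ from above, for every $\lambda$. I then pick, for each $i$ with $r_i\neq0$, the multiplier $\lambda_i=\lambda_i^*+\norm{\lambda_i^*}\,r_i/\norm{r_i}$ (and $\lambda_i=\lambda_i^*$ when $r_i=0$), so that the left-hand side equals $\sum_i\norm{\lambda_i^*}\norm{r_i}$. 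Since $\norm{\lambda_i-\lambda_i^0}\le 2\norm{\lambda_i^*}+\norm{\lambda_i^0}$, squaring and applying $4\norm{\lambda_i^*}\norm{\lambda_i^0}\le 4\norm{\lambda_i^*}^2+\norm{\lambda_i^0}^2$ (i.e.\ $(2\norm{\lambda_i^*}-\norm{\lambda_i^0})^2\ge0$) yields $\norm{\lambda_i-\lambda_i^0}^2\le 8\norm{\lambda_i^*}^2+2\norm{\lambda_i^0}^2$. Substituting this into the right-hand side makes it collapse to exactly $C(c_1,\dots,c_N)/t$, giving $\sum_{i\in\cN}\norm{\lambda_i^*}\norm{r_i}\le C/t$.

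For the suboptimality bound, the upper estimate follows by setting $\lambda=0$ in \eqref{eq:PG-key-ineq}: the residual term disappears and the right-hand side is majorized by $C/t$ (it omits the $4\norm{\lambda_i^*}^2/\gamma_i$ terms and halves the $\norm{\lambda_i^0}^2/\gamma_i$ terms), so $F(\bar u^t)-F(u^*)\le C/t$. The lower estimate uses the saddle-point inequality with Cauchy--Schwarz and the feasibility bound just proved, $F(\bar u^t)-F(u^*)\ge-\sum_i\fprod{\lambda_i^*,r_i}\ge-\sum_i\norm{\lambda_i^*}\norm{r_i}\ge-C/t$; combining the two directions gives $|F(\bar u^t)-F(u^*)|\le C/t$. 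The ``moreover'' statement follows by a standard ADMM argument: taking $\lambda=\lambda^*$ in Lemma~\ref{technical-lemma} and telescoping shows the successive-difference terms $\norm{x_i^{k+1}-x_i^k}^2_{\bar Q_i^k}$, $\gamma_i\norm{B_i(y^{k+1}-y^k)}^2$ and $\tfrac1{\gamma_i}\norm{\lambda_i^{k+1}-\lambda_i^k}^2$ are summable and that $\{w^k\}$ is bounded; hence any limit point satisfies the optimality system \eqref{kkt}, and full column rank of $B=[B_i]_{i\in\cN}$ uniquely determines $y$, forcing convergence of $\{w^k\}$ and of $\{\bar w^k\}$.

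For $\sigma>0$ the optimality upper bound again comes from $\lambda=0$ in \eqref{eq:SPG-key-ineq}, which already carries the term $N(D^2+2\sigma^2)/(2\sqrt t)=C'/\sqrt t$, and the lower bound from the pathwise saddle-point inequality together with the expected feasibility estimate. The crux --- and the main obstacle --- is the feasibility bound: the residual-aligned choice $\lambda_i=\lambda_i^*+\norm{\lambda_i^*}r_i/\norm{r_i}$ is \emph{random}, whereas \eqref{eq:SPG-key-ineq} is valid only for \emph{fixed} $\lambda$ and would deliver merely $\norm{\mathbb E[r_i]}$ rather than the required $\mathbb E[\norm{r_i}]$. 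I avoid this by repeating the second paragraph's argument on the \emph{pathwise} bound $F(\bar u^t)-F(u^*)+\sum_i\fprod{\lambda_i,r_i}\le\Gamma^t$ from the proof of Theorem~\ref{lem:ergodic}, which holds for every $\lambda$ on each sample path, and only then taking expectations. Because the chosen $\lambda$ enters $\Gamma^t$ solely through $\tfrac1{\gamma_i}\norm{\lambda_i-\lambda_i^0}^2$ and leaves the gradient-error contributions untouched, the identities $\mathbb E[(x_i^*-x_i^k)^\top\delta_i^k]=0$ and $\mathbb E[\norm{\delta_i^k}^2]\le\sigma^2$ (with $\sum_{k=0}^{t-1}(1+\sqrt k)^{-1}\le\sqrt t$) still apply, and expectation produces $\sum_{i\in\cN}\norm{\lambda_i^*}\,\mathbb E[\norm{r_i}]\le C'/\sqrt t+C/t$. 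Feeding this into the lower bound and combining with the $\lambda=0$ upper bound gives the claimed $C'/\sqrt t+C/t$ rate. The one remaining subtlety is that this directly controls $\mathbb E[F(\bar u^t)-F(u^*)]$ from both sides; to obtain $\mathbb E[|F(\bar u^t)-F(u^*)|]$ one additionally invokes the pathwise upper bound $F(\bar u^t)-F(u^*)\le\Gamma^t$ at $\lambda=0$ and controls the negative part of the suboptimality, which is where the estimate must be handled with care.
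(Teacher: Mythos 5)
Your proposal follows essentially the same route as the paper's proof: the saddle-point inequality supplies the pointwise lower bound, a residual-aligned dual test vector plugged into the ergodic inequality yields feasibility, $\lambda=0$ yields the suboptimality upper bound, and a telescoping/Fej\'er-type argument (with $\lambda=\lambda^*$ in Lemma~\ref{technical-lemma}, summability of successive differences, subsequential limits satisfying \eqref{kkt}, and full column rank of $B$) gives sequence convergence. Your multiplier choice $\lambda_i=\lambda_i^*+\norm{\lambda_i^*}r_i/\norm{r_i}$ differs only cosmetically from the paper's $\lambda_i=2\norm{\lambda_i^*}\bar{\tau}_i/\norm{\bar{\tau}_i}$, and your bound $\norm{\lambda_i-\lambda_i^0}^2\leq 8\norm{\lambda_i^*}^2+2\norm{\lambda_i^0}^2$ reproduces exactly the constant $C(c_1,\ldots,c_N)$. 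Your insistence, in the stochastic case, on working with the \emph{pathwise} bound $\Gamma^t$ before taking expectations (because \eqref{eq:SPG-key-ineq} is stated only for deterministic $\lambda$) is exactly right, and is in fact more careful than the paper's own one-line remark, which plugs a random $\lambda$ directly into \eqref{eq:SPG-key-ineq}.

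The one loose end is the step you flag at the end but do not close. Assembled as you describe, your pieces give $\mathbb{E}[F(\bar{u}^t)-F(u^*)]\leq C'/\sqrt{t}+C/t$ and $\mathbb{E}[(F(\bar{u}^t)-F(u^*))^-]\leq\sum_i\norm{\lambda_i^*}\mathbb{E}[\norm{r_i}]\leq C'/\sqrt{t}+C/t$, and since $|x|=x+2x^-$ this only yields $\mathbb{E}[|F(\bar{u}^t)-F(u^*)|]\leq 3\left(C'/\sqrt{t}+C/t\right)$: the right rate, but triple the stated constant. To recover the theorem's constant, do not bound the two quantities separately. Take $\lambda_i=2\norm{\lambda_i^*}r_i/\norm{r_i}$ in the pathwise inequality; since $\norm{\lambda_i}=2\norm{\lambda_i^*}$ holds surely, taking expectations gives
\begin{equation*}
\mathbb{E}\big[F(\bar{u}^t)-F(u^*)\big]+2\sum_{i=1}^N\norm{\lambda_i^*}\,\mathbb{E}\big[\norm{r_i}\big]\;\leq\;\frac{C'}{\sqrt{t}}+\frac{C}{t},
\end{equation*}
and the pathwise saddle-point inequality gives $|F(\bar{u}^t)-F(u^*)|\leq\big(F(\bar{u}^t)-F(u^*)\big)+2\sum_i\norm{\lambda_i^*}\norm{r_i}$ on every sample path; taking expectations of the latter and invoking the display yields both claims with the exact constant. (Alternatively, keep your centered choice but do not drop the bracket: a two-case argument shows the left-hand side of the pathwise inequality dominates both $|F(\bar{u}^t)-F(u^*)|$ and $\sum_i\norm{\lambda_i^*}\norm{r_i}$ pathwise.)
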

\begin{proof}
Suppose $\sigma=0$. Consider $\bar{Q}_i^k$ defined in Lemma~\ref{technical-lemma}; since $c_i^k=c_i\in(0,(L_i+\gamma_i\norm{A_i}^2)^{-1})$, we have $\bar{Q}_i\triangleq\bar{Q}_i^k=\frac{1}{c_i}Q_i-L_i\succ \mathbf{0}$ for $k\geq 0$ and $i\in\cN$. For $i\in\cN$, let $a_i^k\triangleq \frac{1}{c_i}\left\|x_i^k-x_i^* \right\|_{Q_i}^2+\gamma_i \left\| B_i (y^k - y^*) \right\|^2+\frac{1}{\gamma_i}\left\| \lambda_i^k-\lambda^*_i\right\|^2$ and $b_i^k\triangleq \norm{x_i^{k+1}-x_i^k}_{\bar{Q}_i}^2+ \gamma_i\norm{B_i(y^{k+1}-y^k)}^2+\frac{1}{\gamma_i}\norm{\lambda_i^{k+1}-\lambda_i^{k}}^2$; define $a^k\triangleq \sum_{i=1}^N a_i^k$ and $b^k\triangleq \sum_{i=1}^N b_i^k$ for $k\geq 0$. Hence, Lemma~\ref{technical-lemma} implies that $a^{k+1}+b^k\leq a^k$ for $k\geq 0$, where we used $F(u^*) \leq F(u^{k+1}){+\sum_{i=1}^N\fprod{\lambda^*_i,~A_ix_i^{k+1}+B_iy^{k+1}-b_i}}$ because $\cL(\bx^{k+1},y^{k+1},\lambda^*)\geq\cL(\bx^*,y^*,\lambda^*)$. Therefore, $\lim_k a^k$ exists and $\sum_{k=0}^\infty b^k<\infty$, which implies that $\{a_i^k\}_{k\geq 0}$ is bounded for all $i$. Moreover $Q_i\succ\mathbf{0}$ implies that $\{\lambda_i^k\}$, $\{x_i^k\}$ and $\{B_iy^k\}$ are all bounded sequences for $i\in\cN$. If $B$ has full column rank, $\{y^k\}$ is bounded. Hence, there exists $\{k_n\}_{n\geq 0}$ such that $\lim_{n}\lambda_i^{k_n}\triangleq\bar{\lambda}$, $\lim_{n}x_i^{k_n}\triangleq\bar{x}$ for all $i$ and $\lim_{n}y^{k_n}\triangleq\bar{y}$ exist. Moreover, since $\sum_{k=1}^\infty b^k<\infty$, we have $\sum_{k=1}^\infty b_i^k<\infty$ for $i\in\cN$; thus, $\lim_k\norm{x_i^{k+1}-x_i^k}=\lim_k\norm{\lambda_i^{k+1}-\lambda_i^k}=0$ for all $i$, and $\lim_k\norm{y^{k+1}-y^k}=0$. This implies that $\lim_{n}\lambda_i^{k_n\pm 1}=\bar{\lambda}$, $\lim_{n}x_i^{k_n\pm 1}=\bar{x}$ for $i\in\cN$ and $\lim_{n}y^{k_n\pm 1}=\bar{y}$. Note $A_i\bar{x}_i+B\bar{y}-b_i=\lim_{n}A_ix_i^{k_n}+By^{k_n}-b_i=\frac{1}{\gamma_i}\lim_n \lambda_i^{k_n}-\lambda_i^{k_n-1}=0$. Now, consider the subsequential limit of both sides of the relations in \eqref{alg-opt-conditions} along $k_n$. Since $g$ and $\xi_i$ for all $i$ are proper, closed convex functions, it follows from Theorem~24.4 in~\cite{rockafellar2015convex} that $\mathbf{0}\in \partial\xi_i(\bar{x}_i)+\grad f_i(\bar{x}_i)+A_i^\top\bar{\lambda}$ for all $i$, and $\mathbf{0}\in\partial g(\bar{y})+\sum_{i=1}^NB_i^\top \bar{\lambda}$. Thus $\bar{w}=(\bar{x},\bar{y},\bar{\lambda})$ is a primal-dual point for \eqref{prob}. Now define $r_i^k\triangleq \frac{1}{\gamma_i}\left\| \bar{\lambda}_i - \lambda_i^k\right\|^2+\frac{1}{c_i}\left\| \bar{x}_i-x_i^k \right\|_{Q_i}^2+\gamma_i \left\| B_i (y^k - \bar{y}) \right\|^2$ and $r^k=\sum_{i=1}^Nr_i^k$ for $k\geq 0$. Thus $\lim_k r^k=\lim_n r^{k_n}=0$ and we have $\lim x_i^k=\bar{x}$ (since $\bar{Q}\succ\mathbf{0}$) and $\lim \lambda_i^k=\bar{\lambda}$ for all $i$, and $\lim_k y^k=\bar{y}$.

In the rest, we prove the rate result. Since any $(u^*,\lambda^*) \in\chi^*$ is a saddle point of $\cL$, it satisfies $\cL(u,\lambda^*)\geq\cL(u^*,\lambda^*)$ for any $u=(\bx,y)$, which can be easily checked using the optimality condition \eqref{kkt}. Thus, setting $u=\bar{u}^t=(\bar{\bx}^t,\bar{y}^t)$ and using the fact that $A_ix_i^* + B_iy^*= b_i$ for $i\in\cN$
we obtain \vspace*{-1mm}
{\small
\begin{align}
\label{eq:erg_nonpos}
F(\bar{u}^t)-F(u^*) + \sum\limits_{i=1}^N 2\norm{\lambda_i^*} \left\| A_i\bar{x}^t_i + B_i\bar{y}^t - b_i \right\|\geq F(\bar{u}^t)-F(u^*)+\sum_{i=1}^N {\lambda_i^*}^\top \left( A_i\bar{x}^t_i + B_i\bar{y}^t - b_i \right) \geq 0.
\end{align}
}%

Let $\rho_i\triangleq2 \|\lambda_i^*\|$ and $\bar{\tau}_i \triangleq A_i \bar{x}_i^t + B_i \bar{y}^t - b_i$ for $i=1,2,\ldots,N$. Note \eqref{eq:PG-key-ineq} holds for any $\lambda=[\lambda_i]_{i\in\cN}$; hence, by setting $\lambda_i = \rho_i \bar{\tau}_i/\norm{\bar{\tau}_i}$ in \eqref{eq:PG-key-ineq}, noting that $\|\lambda_i\| = \rho_i$, and using $\norm{\lambda_i-\lambda_i^0}^2/2\leq\norm{\lambda_i}^2+\norm{\lambda_i^0}^2$, we obtain
$F(\bar{u}^t)-F(u^*) + \sum\limits_{i=1}^N \rho_i \left\|  A_i\bar{x}_i^t + B_i\bar{y}^t -b_i \right\| \leq C(c_1,\ldots,c_N)/t$.
From \eqref{eq:erg_nonpos} and the above inequality, it follows that \vspace*{-2mm}
{\small
\begin{align*}
- \sum_{i=1}^N\| \lambda_i^*\| \|\bar{\tau}_i\| &\leq F(\bar{u}^t) - F(u^*) \leq C(c_1,\ldots,c_N)/t - \sum_{i=1}^N \rho_i \|\bar{\tau}_i\|.
\end{align*}
}%
Hence, since $\rho_i\triangleq 2 \norm{\lambda_i^*}$, 
it follows that $\sum_{i=1}^N  \| \lambda_i^*\|   \|\bar{\tau}_i\| \leq \frac{C(c_1,\ldots,c_N)}{t}$.
Therefore, one can conclude that
$\max\{| F(\bar{u}^t) - F(u^*) |,\ \sum_{i=1}^N  \| \lambda_i^*\|   \| A_i \bar{x}_i^t + B_i \bar{y}^t - b_i \|\} \leq \frac{C(c_1,\ldots,c_N)}{t}$.

Using similar arguments as above one can show $\cO(1/\sqrt{t})$ rate for the SPG-ADMM. Indeed, observe that \eqref{eq:SPG-key-ineq} holds for any $\lambda=[\lambda_i]_{i\in\cN}$; hence, in \eqref{eq:SPG-key-ineq} by setting $\lambda_i = \rho_i \bar{\tau}_i/\norm{\bar{\tau}_i}$ as a random vector, noting that $\mathbb{E}[\|\lambda_i\|] = \rho_i$, and using $\norm{\lambda_i-\lambda_i^0}^2/2\leq\norm{\lambda_i}^2+\norm{\lambda_i^0}^2$, we obtain the desired results similarly as above. 
\vspace*{-3mm}
\end{proof}
\subsection{Connections to the existing work}
There is a strong connection between PG-ADMM and the primal-dual algorithms~(PDA) in~\cite{chambolle2015ergodic,condat2013primal} proposed for solving saddle point problems. Let $\Phi=\xi+f$ as in~\eqref{eq:F_i} such that $\grad f$ is Lipschitz with constant $L$; after fixing stepsize $c>0$ and penalty parameter $\gamma>0$, implementing PG-ADMM on $\min_{x,y}\{\Phi(x)+g(y):\ A x- y=\mathbf{0}\}$ generates the following iterate sequence:
{\small
\begin{subequations}\label{eq:pgadmm}
\begin{align}
x^{k+1}&\gets\prox{c\xi}\Big(x^k-c\Big[\grad f(x^k)+A^\top\big(\lambda^k+\gamma~(A x^k-y^k)\big)\Big]\Big), \label{eq:pgadmm_x1}\\
y^{k+1}&\gets\argmin g(y)+\tfrac{\gamma}{2}\norm{A x^{k+1}-y+\gamma^{-1}\lambda^k}^2, \label{eq:pgadmm_w1}\\
\lambda^{k+1}&\gets\lambda^k+\gamma~(A x^{k+1}- y^{k+1}). \label{eq:pgadmm_y1}
\end{align}
\end{subequations}}%
For PG-ADMM iterate sequence, the suboptimality and infeasibility converges to 0 in the ergodic sense
for any $\gamma>0$ when $\frac{1}{c}\geq L+\gamma\norm{A}^2$. Note \eqref{eq:pgadmm_x1} 
can be rewritten as $x^{k+1}=\prox{c\xi}(x^k-c[\nabla f(x^k)+A^\top(2\lambda^k-\lambda^{k-1})])$.
Let $g^*$ denote the convex conjugate of $g$; using Moreau proximal decomposition on $y$-updates in \eqref{eq:pgadmm_w1}, we get
{\small
\begin{align}
y^{k+1}= A x^{k+1}+ \gamma^{-1}\lambda^k-{1\over \gamma}~\prox{\gamma g^*}(\lambda^k+\gamma A x^{k+1}). \label{eq:pgadmm_w2}
\end{align}
}%
Combining \eqref{eq:pgadmm_w2} and \eqref{eq:pgadmm_y1} shows $\lambda^{k+1}=\prox{\gamma g^*}(\lambda^k+\gamma A x^{k+1})$. Thus, \eqref{eq:pgadmm} can be written as
{\small
\begin{subequations}\label{eq:pgadmm-final}
\begin{align}
x^{k+1}&\gets\prox{c\xi}\Big(x^k-c\big[\grad f(x^k)+A^\top(2\lambda^k-\lambda^{k-1})\big]\Big),\\
\lambda^{k+1}&\gets\prox{\gamma g^*}(\lambda^k+\gamma A x^{k+1}).
\end{align}
\end{subequations}
}%
The iterative scheme in \eqref{eq:pgadmm-final} is the same as the PDA proposed in~\cite{condat2013primal}, where Condat only considered the
convergence of the algorithm, and no iteration complexity was given in~\cite{condat2013primal}. The scheme in~\eqref{eq:pgadmm-final} is also a variant of PDA iterations in~\cite{chambolle2015ergodic}. In particular, PG-ADMM as written in \eqref{eq:pgadmm-final} generates the same iterate sequence as $(x^{k+1},\lambda^{k+1})=\cP\cD_{c,\gamma}(x^{k},\lambda^{k},x^{k+1},2\lambda^{k}-\lambda^{k-1})$ in \cite{chambolle2015ergodic} when the Bregman functions $D_x$ and $D_\lambda$ chosen as $\tfrac{1}{2}\norm{\cdot}^2$. Moreover, one can easily prove that Theorem 1 in \cite{chambolle2015ergodic} is still true for this variant of PDA for any $c,\gamma>0$ such that $(\frac{1}{c}-L)\frac{1}{\gamma}\geq \norm{A}^2$. It is worth emphasizing that this equivalence is no longer true on problems $\min_{x,y}\{\Phi(x)+g(y):\ Ax+By=b\}$ with a general $B$, instead of $B=-I_{n_y}$. Note that PG-ADMM is more general than PDAs in~\cite{chambolle2015ergodic,condat2013primal} in the sense that it can also deal with noisy gradients while PDAs cannot.\vspace*{-3mm}
\section{Proximal Gradient Methods for Distributed Optimization}
\label{sec:synch}
In this section, we provide {consensus formulations of the decentralized optimization problem in~\eqref{eq:dist_opt} for unweighted and weighted static (undirected) communication networks}; and these formulations are special cases of~\eqref{prob}. Hence, we develop two different distributed 
{algorithms based on} PG-ADMM in~\eqref{GADM-extension}, one for each formulation; and finally we derive the customized convergence bounds showing the effect of network topology for each implementation. Similarly, one can also implement SPG-ADMM in~\eqref{GADM-stoc-extension} on the two different decentralized formulations of \eqref{eq:dist_opt} to obtain the stochastic gradient variants of these distributed algorithms {based on SPG-ADMM}. The error bounds for these stochastic variants can be driven as we obtain the bounds for the deterministic versions using Theorem~\ref{lem:ergodic}. Due to space considerations, we skip their proofs and only state the error bounds for these variants using stochastic gradients as corollaries of the deterministic error bound results shown in Theorem~\ref{thm:DPGA-II} and Theorem~\ref{thm:DPGA-I}.

In the rest of this paper, we adopt the following notation. Let $\cG=(\cN,\cE)$ be a connected graph, where $\cN\triangleq\{1,\ldots,N\}$ denotes the set of computing nodes, and $\cE\subset\cN\times\cN$ denote the set of (undirected) edges. Without loss of generality, assume that edges in $\cE$ are oriented, i.e., $(i,j)\in\cE$ implies that $i<j$. Let $\cN_i\triangleq\{j\in\cN:\ (i,j)\in\cE\ \mathrm{or}\ (j,i)\in\cE\}$ denote the set of neighboring nodes of $i \in \cN$, and $d_i\triangleq|\cN_i|$ denote the degree of node $i\in\cN$. 
Let $\Omega \in \reals^{|\cN| \times |\cN|}$ denote the Laplacian, 
and $M\in\reals^{|\cE|\times |\cN|}$ denote the oriented edge-node incidence matrix, i.e., for $e=(i,j)\in\cE$ and $k\in\cN$, $M_{ek}$ equals to $1$ if $k=i$, to $-1$ if $k=j$, and to 0, otherwise.

\begin{remark}
\label{rem:laplacian}
Note that $\Omega=M^\top M$. Let $\psi_{\max}\triangleq\psi_1\geq\psi_2\geq\ldots\geq\psi_N$ be the eigenvalues of $\Omega$. Since we assumed that $\cG$ is \emph{connected}, we have $\psi_{N-1}>\psi_N=0$. Hence, $\Rank(M)=\Rank(\Omega)=N-1$. Moreover, $(M\otimes I_n)^\top (M\otimes I_n)=\Omega\otimes I_n$; and from the structure of $\Omega\otimes I_n$, it follows that $\{\psi_i\}_{i=1}^N$ are also the eigenvalues of $\Omega\otimes I_n$, each with \emph{algebraic multiplicity} $n$. Thus, $R\triangleq\Rank(M\otimes I_n)=n(N-1)$. Let $M\otimes I_n=U\Sigma V^\top$ denote the reduced singular value decomposition~(SVD) of $M\otimes I_n$, where $U=[u_1 \ldots u_{R}]\in\reals^{n|\cE|\times R}$, $V=[v_1 \ldots v_{R}]\in\reals^{n |\cN| \times R}$, $\Sigma=\diag(\sigma)$, and $\sigma\in\reals_{++}^{R}$. Note that $\sigma_{\max}(M\otimes I_n)=\max\{\sigma_r:\ r=1,\ldots,R\}=\sqrt{\psi_{1}}$, and $\sigma_{\min}(M\otimes I_n)=\min\{\sigma_r:\ r=1,\ldots,R\}=\sqrt{\psi_{N-1}}$.
\end{remark}
\subsection{DPGA Algorithm}
\label{sec:dpga2}
Using $M\in\reals^{|\cE|\times |\cN|}$, \eqref{eq:dist_opt} can be equivalently written as \vspace*{-1mm}
{\small
\begin{align}
\label{eq:DPGA-II formulation-compact}
\min_{\bx \in \reals^{n|\cN|}}\Big\{F(\bx)\triangleq\sum_{i\in\cN}\Phi_i(x_i):\ (M\otimes I_n)\bx=\mathbf{0}\Big\},
\end{align}
}%
where $\bx^\top=[x_1^\top,\ldots,x_N^\top]^\top$ and $\Phi_i$ is defined in \eqref{eq:F_i}. For each $(i,j)\in\cE$, define new set of primal variables $y_{ij}\in\reals^n$, and let $\by=[y_{ij}]_{(i,j)\in\cE}\in\reals^{n|\cE|}$. Now consider the following reformulation: \vspace*{-1mm}
{\small
\begin{align}
\label{eq:DPGA-II formulation}
\min_{\by,~\{x_i\}_{i\in\cN}}\left\{\sum_{i \in \cN} \Phi_i(x_i):\ x_i - y_{ij} = 0: \alpha_{ij}, \quad x_j - y_{ij} = 0: \beta_{ij},\quad \forall (i,j) \in \cE\right\},
\end{align}
}%
where $\alpha_{ij}\in\reals^n$ and $\beta_{ij}\in\reals^n$ denote the Lagrange multiplier vectors corresponding to the primal constraints $x_i - y_{ij} = 0$, and  $x_j - y_{ij} = 0$ in \eqref{eq:DPGA-II formulation}. 
Define $\alpha=[\alpha_{ij}]_{(i,j)\in\cE}\in\reals^{n|\cE|}$, and $\beta=[\beta_{ij}]_{(i,j)\in\cE}\in\reals^{n|\cE|}$. 
Note \eqref{eq:DPGA-II formulation} is a special case of \eqref{prob} with $g(\by)=0$, and one can employ PG-ADMM or SPG-ADMM to solve \eqref{eq:DPGA-II formulation}. 

Next, we focus on the implementation details of PG-ADMM. Indeed, it can be easily observed that $A_i$ of \eqref{prob} takes the following form for \eqref{eq:DPGA-II formulation}: $A_i=(\mathbf{1}_{d_i}\otimes I_n)\in\reals^{n d_i\times n}$. 
Furthermore, it can also be easily observed that $\sum_{i=1}^N \gamma_i B_i^\top B_i$ in Theorem~\ref{lem:ergodic} is equal to $\diag([\gamma_i+\gamma_j]_{(i,j)\in\cE})\otimes I_{n}\in\reals^{n|\cE|\times n|\cE|}$ for \eqref{eq:DPGA-II formulation}, and $B=[B_i]_{i\in\cN}$, obtained by vertically concatenating $\{B_i\}_{i\in\cN}$, has full column rank. For all $i\in\cN$, we set the stepsize $c_i$ 
according to Theorems~\ref{lem:ergodic} and \ref{thm-ergodic}, i.e., $0<c_i\leq 1/(L_i+\gamma_i\norm{A_i}^2)$. Hence, for the formulation \eqref{eq:DPGA-II formulation}, this corresponds to setting $c_i\leq 1/(L_i+\gamma_i d_i)$ since $\sigma_{\max}(A_i)=\sqrt{d_i}$ for $i\in\cN$. {For the convergence of $\{\bx^k\}_{k\geq 0}$ to a unique limit point, strict inequality is required when choosing $c_i$ for $i\in\cN$.}

Let $\{x_i^0\}_{i\in\cN}$ denote the set of initial primal iterates. The smooth part of the augmented Lagrangian $\phi_\gamma$ corresponding to the formulation \eqref{eq:DPGA-II formulation} can be written as \vspace*{-1mm}
{\small
\begin{align*}
\phi_\gamma(\bx,\by,\alpha,\beta)=\sum_{i\in\cN}f_i(x_i)+\sum_{(i,j)\in\cE}\left(\alpha_{ij}^\top(x_i-y_{ij})+\beta_{ij}^\top(x_j-y_{ij})
\right)+\sum_{(i,j)\in\cE}\left({\frac{\gamma_i}{2}}\norm{x_i-y_{ij}}^2+{\frac{\gamma_j}{2}}\norm{x_j-y_{ij}}^2\right)
\end{align*}
}%
{for node-specific penalty parameters $\{\gamma_i\}_{i\in\cN}\subset\reals_{++}$;} hence, $\grad_{x_i}\phi_\gamma$ can be computed as %
{\small
\begin{align}
\label{eq:DPGA-II-gradient}
\grad_{x_i}\phi_\gamma(\bx^k,y^k,\alpha^k,\beta^k)=\grad f(x_i^k)
+\sum_{j:(i,j)\in\cE}\left(\alpha_{ij}^k+\gamma_i(x_i^k-y_{ij}^k)\right)+\sum_{j:(j,i)\in\cE}\left(\beta_{ji}^k+\gamma_i(x_i^k-y_{ji}^k)\right)
\end{align}
}%
for $k\geq 0$ and the steps of PG-ADMM in \eqref{GADM-extension} take the following form: \vspace*{-1mm}
{\small
\begin{subequations}\label{eq:pre-dpga}
\begin{align}
x_j^{k+1} &= \prox{c_j\xi_j} \left( x_j^k - c_j\grad_{x_j}\phi_\gamma(\bx^k,y^k,\alpha^k,\beta^k) \right), \quad j \in \cN, \label{eq:update-x2} \\
y_{ij}^{k+1} &= \argmin_{y_{ij}} \left\{ -\left(\alpha_{ij}^k+\beta_{ij}^k\right)^\top y_{ij}+
\frac{\gamma_i}{2}\norm{x^{k+1}_i-y_{ij}}^2+\frac{\gamma_j}{2}\norm{x^{k+1}_j-y_{ij}}^2\right\}, \quad (i,j) \in \cE, \label{eq:update-z2}\\
\alpha_{ij}^{k+1} &= \alpha_{ij}^k + \gamma_i \left(x_i^{k+1} - y_{ij}^{k+1} \right), \quad (i,j)\in\cE, \label{eq:update-lambda2-2}\\
\beta_{ij}^{k+1} & = \beta_{ij}^k + \gamma_j \left(x_j^{k+1} - y_{ij}^{k+1} \right), \quad (i,j)\in\cE. \label{eq:update-lambda2}
\end{align}
\end{subequations}
}%
For $k\geq 0$, \eqref{eq:update-z2} can be solved in closed form: \vspace*{-1mm}
{\small
\begin{align}
\label{eq:dummy-primal-y2}
{(\gamma_i+\gamma_j)~y_{ij}^{k+1}=\alpha^k_{ij}+\beta^k_{ij}+\gamma_i x_i^{k+1}+\gamma_j x_j^{k+1}.}
\end{align}
}%
Summing \eqref{eq:update-lambda2-2} and \eqref{eq:update-lambda2}, and using \eqref{eq:dummy-primal-y2}, we get for $k\geq 0$ \vspace*{-1mm}
{\small
\begin{align*}
\alpha^{k+1}_{ij}+\beta^{k+1}_{ij}=\alpha^k_{ij}+\beta^k_{ij}+\gamma_i x_i^{k+1}+\gamma_j x_j^{k+1}-(\gamma_i+\gamma_j)y_{ij}^{k+1}=\mathbf{0}.
\end{align*}
}%
Hence, for each $(i,j)\in\cE$, we have $\alpha^k_{ij}+\beta^k_{ij}=0$ for $k\geq 1$. Suppose we initialize $\alpha^0=\beta^0\triangleq\mathbf{0}$, i.e., $\alpha^0_{ij}=\beta^0_{ij}=0$ for all $(i,j)\in\cE$, and $y_{ij}^0\triangleq(\gamma_i x_i^0+\gamma_j x_j^0)/(\gamma_i+\gamma_j)$ for $(i,j)\in\cE$. Therefore, using \eqref{eq:dummy-primal-y2}, we can conclude that for each $(i,j)\in\cE$, {$y_{ij}^k=(\gamma_i x_i^k+\gamma_j x_j^k)/(\gamma_i+\gamma_j)$} for all $k\geq 0$. Hence, $\alpha_{ij}^{k} = {\frac{\gamma_i\gamma_j}{\gamma_i+\gamma_j}}\sum_{\ell=1}^k \left(x_i^{\ell} - x_j^\ell \right)$, and $\beta_{ij}^{k} = {\frac{\gamma_i\gamma_j}{\gamma_i+\gamma_j}}\sum_{\ell=1}^k \left(x_j^{\ell} - x_i^\ell \right)$ for all $k\geq 1$. Therefore, combining these recursions, \eqref{eq:DPGA-II-gradient} can be computed for all $k\geq 0$ as \vspace*{-1mm}
{\small
\begin{align}
\grad_{x_i}\phi_\gamma(\bx^k,y^k,\alpha^k,\beta^k)
&=\grad f(x_i^k)+\sum_{j\in\cN_i}{\frac{\gamma_i\gamma_j}{\gamma_i+\gamma_j}}\left[(x_i^k-x_j^k)+\sum_{\ell=1}^k(x_i^\ell-x_j^\ell)\right]. \label{eq:DPGA-II-gradient-simple}
\end{align}
}%
Hence, $\grad_{x_j}\phi_\gamma$ in step \eqref{eq:update-x2} can be written in a compact form. Define $\Gamma\in\reals^{|\cN|\times |\cN|}$: for $i\in\cN$
\begin{align}
\Gamma_{ij}\triangleq 0,\ \ j\not\in\cN_i\cup\{i\},\quad \Gamma_{ij}\triangleq-\frac{\gamma_i\gamma_j}{\gamma_i+\gamma_j},\ \ j\in\cN_i,\quad \Gamma_{ii}\triangleq \sum_{j\in\cN_i}\frac{\gamma_i\gamma_j}{\gamma_i+\gamma_j};
\end{align}
and set $s_i^k \triangleq \sum_{j \in \cN_i\cup\{i\}} \Gamma_{ij} x_j^{k}$ for $k\geq 0$; and let $p_i^k\triangleq \sum_{\ell=1}^k s_i^k$ for $k\geq 1$ and $p_i^0=\mathbf{0}$. This initialization and \eqref{eq:DPGA-II-gradient-simple} together imply that $\grad_{x_i}\phi_\gamma(\bx^k,\bz^k,\lambda^k)=\nabla f_i(x_i^k)+ p_{i}^k + s_i^k$ for $k\geq 0$.
Therefore, the steps in \eqref{eq:update-x2}-\eqref{eq:update-lambda2} can be simplified as shown in Figure~\ref{alg:DPGA-II} below. {The algorithm works in a distributed fashion: each node $i\in\cN$ i) sends $\gamma_i$ to and receives $\gamma_j$ from all its neighbors $j\in\cN_i$ once at the beginning -- hence, $\Gamma_{ij}$ for $j\in\cN_i\cup\{i\}$ can be computed; ii) stores three variables in $\reals^n$: $x_i^k, s_i^k$ and $p_i^k$; iii) computes the proximal step; iv) broadcasts the updated variable $x_i^{k+1}$ to all $j\in\cN_i$, and then updates $s_i^{k+1}$; iv) each node updates $p_i^{k+1}$, and then repeats.}
\begin{figure}[htpb]
\centering
\framebox{\parbox{\columnwidth-10pt}{
{\small
\textbf{Algorithm DPGA} ( $ \bx^{0}, \{\gamma_i\}_{i\in\cN}$ ) \\[1.5mm]
Initialization: $c_i<(L_i+\gamma_i d_i)^{-1}$,\quad $s_i^0=\sum_{j \in \cN_i\cup\{i\}} \Gamma_{ij} x_j^{0}$,\quad $p_i^{0} = \mathbf{0}$,\quad $i\in\cN$\\
Step $k$: ($k \geq 0$) For $i\in\cN$ compute\\
\text{ } 1. $x_i^{k+1} = \prox{c_i\xi_i} \Big( x_i^k - c_i\big(\nabla f_i(x_i^k) + p_i^k + s_i^k \big)  \Big), \quad i \in \cN$\\
\text{ } 2. $s_i^{k+1} = \sum_{j \in \cN_i\cup\{i\}} \Gamma_{ij} x_j^{k+1}, \quad i \in \cN$\\
\text{ } 3. $p_i^{k+1} = p_i^{k} + s_i^{k+1}, \quad i \in \cN$\\
 }}}
\caption{\small Distributed Proximal Gradient Algorithm (DPGA)}
\label{alg:DPGA-II}
\end{figure}
\subsubsection{Error Bounds for DPGA \& Effect of Topology}
\label{sec:dpga-errorbounds}
In this section, we examine the effect of network topology on the convergence rate of DPGA, which is nothing but PG-ADMM customized to the decentralized formulation in \eqref{eq:DPGA-II formulation} as discussed in Section~\ref{sec:dpga2}. To obtain simple $\cO(1)$ constants in the error bounds, we set $\alpha_{ij}^0=\beta_{ij}^0=\mathbf{0}$ for all $(i,j)\in\cE$.
\begin{theorem}
\label{thm:DPGA-II}
Suppose a solution to \eqref{eq:central_opt}, $x^*\in\reals^n$, exists and {$\ri(\cap_{i\in\cN}\dom \xi_i)\neq\emptyset$}. Given arbitrary $\{x^0_i,\gamma_i\}_{i\in\cN}$, the DPGA iterate sequence $\{\bx^k\}_{k\geq 1}$, generated as shown in Figure~\ref{alg:DPGA-II}, {converges to an optimal solution} to \eqref{eq:dist_opt}. Moreover, the average sequence $\{\bar{\bx}^t\}_{t\geq 1}$, defined as $\bar{x}^t_i=(\sum_{k=1}^tx_i^k)/t$ for $i\in\cN$ and $t\geq 1$, satisfies the following 
bounds for all $t\geq 1$, \vspace*{-1mm}
{\small
\begin{align*}
-\frac{1}{t}\left( \frac{2\norm{Q}}{\sigma_{\min}(\Omega)}\sum_{i\in\cN}\kappa_i^2 + \sum_{i \in \cN} \frac{1}{2c_i} \norm{x^* - x_{i}^0  }^2\right)\leq F(\bar{\bx}^t)-F^*\leq \frac{1}{t}\left(\sum_{i \in \cN} \frac{1}{2c_i} \norm{x^* - x_{i}^0  }^2\right),\\ 
\left(\sum_{(i,j)\in\cE}\norm{\bar{x}_i^t-\bar{x}^t_j}^2\right)^{\tfrac{1}{2}}\leq \frac{1}{t}\left[\norm{Q}\left(\sum_{i\in\cN}\frac{\kappa_i^2}{\sigma_{\min}(\Omega)}+1\right)+ \sum_{i \in \cN} \frac{1}{2c_i} \norm{x^*-x_i^0}^2\right],
\end{align*}}%
when the step-size $c_i\leq(L_i+\gamma_i d_i)^{-1}$ for $i\in\cN$, where $\kappa_i>0$ denotes a bound on the elements of $\partial \Phi_i(x^*)$, i.e., if $q\in\partial \Phi_i(x^*)$, then $\norm{q}\leq \kappa_i$, for each $i\in\cN$, and {$Q\triangleq\diag([\frac{1}{\gamma_i}+\frac{1}{\gamma_j}]_{(i,j)\in\cE})$.}
\end{theorem}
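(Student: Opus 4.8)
The plan is to recognize the algorithm in Figure~\ref{alg:DPGA-II} as PG-ADMM applied to the edge-based reformulation \eqref{eq:DPGA-II formulation}, which is the special case of \eqref{prob} with $g\equiv 0$, $b_i=\mathbf{0}$, $A_i=\mathbf{1}_{d_i}\otimes I_n$ (so $A_i^\top A_i=d_i I_n$ and $\sigma_{\max}(A_i)=\sqrt{d_i}$), and $B_i$ chosen so that $\sum_{i\in\cN}\gamma_iB_i^\top B_i=\Diag([\gamma_i+\gamma_j]_{(i,j)\in\cE})\otimes I_n$ with $B=[B_i]_{i\in\cN}$ of full column rank. Every deterministic ($\sigma=0$) bound of Theorem~\ref{lem:ergodic} and Theorem~\ref{thm-ergodic} is then available, once Assumption~\ref{assumption-1} is checked. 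First I would verify Assumption~\ref{assumption-1}: existence of $x^*$ solving \eqref{eq:central_opt} together with $\ri(\cap_{i\in\cN}\dom\xi_i)\neq\emptyset$ is a relative-interior constraint qualification guaranteeing Lagrange multipliers, hence $\chi^*\neq\emptyset$. Convergence of $\{\bx^k\}_{k\geq 0}$ to an optimizer then follows verbatim from the $\sigma=0$ argument of Theorem~\ref{thm-ergodic}, whose two hypotheses---strict stepsize $c_i<(L_i+\gamma_id_i)^{-1}$ (which gives $\bar Q_i\succ\mathbf{0}$) and full column rank of $B$---both hold here.

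The core of the proof is a topology-dependent estimate of a convenient optimal dual. From optimality of $x^*$ for \eqref{eq:central_opt} we have $\mathbf{0}\in\sum_{i\in\cN}\partial\Phi_i(x^*)$, so I may pick $q_i\in\partial\Phi_i(x^*)$ with $\norm{q_i}\leq\kappa_i$ and $\sum_{i\in\cN}q_i=\mathbf{0}$. Writing the KKT system of \eqref{eq:DPGA-II formulation}, the $y_{ij}$-stationarity forces $\beta_{ij}^*=-\alpha_{ij}^*$, and the $x_i$-stationarity reads $q=-(M\otimes I_n)^\top\alpha^*$, with $\alpha^*=[\alpha_{ij}^*]_{(i,j)\in\cE}$ viewed as an edge vector. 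Because $\sum_{i\in\cN}q_i=\mathbf{0}$ and $\cG$ is connected, $q$ lies in the range of $(M\otimes I_n)^\top$ (Remark~\ref{rem:laplacian}); taking the minimum-norm solution and using $\Omega=M^\top M$ gives $\norm{\alpha^*}^2=q^\top(\Omega^\dagger\otimes I_n)q\leq \norm{q}^2/\sigma_{\min}(\Omega)\leq(\sum_{i\in\cN}\kappa_i^2)/\sigma_{\min}(\Omega)$, where $\sigma_{\min}(\Omega)=\psi_{N-1}$ is the smallest \emph{nonzero} Laplacian eigenvalue. This is the single place where the algebraic connectivity enters the constants.

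It remains to specialize \eqref{eq:PG-key-ineq}. Setting $\lambda^0=\mathbf{0}$ and restricting to duals of the form $\lambda=(\alpha,-\alpha)$, two simplifications occur: the dual penalty term collapses to $\norm{\alpha}_Q^2$ with $Q=\Diag([\tfrac{1}{\gamma_i}+\tfrac{1}{\gamma_j}]_{(i,j)\in\cE})$, and the residual pairing collapses to $\alpha^\top\bar d$ with $\bar d\triangleq(M\otimes I_n)\bar{\bx}^t$, whose norm is exactly the consensus violation $(\sum_{(i,j)\in\cE}\norm{\bar{x}_i^t-\bar{x}_j^t}^2)^{1/2}$. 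Moreover, since $y_{ij}^0$ is the $\gamma$-weighted average of $x_i^0,x_j^0$, a Jensen bound on $\norm{\cdot}^2$ shows the initialization term $\norm{y^*-y^0}^2_{\sum_i\gamma_iB_i^\top B_i}\leq\sum_{i\in\cN}\gamma_id_i\norm{x^*-x_i^0}^2$ combines with the deficiency $Q_i=(1-\gamma_ic_id_i)I_n$ so that the whole primal contribution is bounded by $S\triangleq\sum_{i\in\cN}\tfrac{1}{c_i}\norm{x^*-x_i^0}^2$. Hence for every $\alpha$ one gets $F(\bar{\bx}^t)-F^*+\alpha^\top\bar d\leq\tfrac{1}{2t}\norm{\alpha}_Q^2+\tfrac{S}{2t}$. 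Choosing $\alpha=\mathbf{0}$ yields the stated suboptimality upper bound $\tfrac{S}{2t}$; choosing $\alpha$ aligned with $\bar d$ (using $\norm{\bar d}_Q^2\leq\norm{Q}\,\norm{\bar d}^2$) and combining with the saddle-point lower bound $F(\bar{\bx}^t)-F^*\geq-(\alpha^*)^\top\bar d\geq-\norm{\alpha^*}\,\norm{\bar d}$ produces a scalar quadratic inequality in $\norm{\bar d}$; solving it and inserting the connectivity estimate for $\norm{\alpha^*}$ gives the $\cO(1/t)$ consensus bound, and feeding this back into the lower bound yields the matching lower bound on $F(\bar{\bx}^t)-F^*$.

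I expect the main obstacle to be the dual-norm estimate together with the accompanying constant bookkeeping. The passage from $q=-(M\otimes I_n)^\top\alpha^*$ to $\norm{\alpha^*}^2\leq(\sum_{i\in\cN}\kappa_i^2)/\sigma_{\min}(\Omega)$ is what injects the network topology into the rate, and it hinges on connectedness forcing $\Rank(M\otimes I_n)=n(N-1)$ (Remark~\ref{rem:laplacian}), so that the range of $(M\otimes I_n)^\top$ is exactly the zero-sum subspace; care is needed because $\sigma_{\min}(\Omega)$ must be read as the smallest \emph{nonzero} eigenvalue. The other delicate point is the clean collapse of the $y^0$-initialization and $Q_i$-deficiency into $S$, which relies on the weighted-averaging form of $y_{ij}^0$ and on the $\beta=-\alpha$ structure that converts the per-node residuals into the aggregate $\norm{(M\otimes I_n)\bar{\bx}^t}$; the precise factor-$2$ and additive $\norm{Q}$ appearing in the stated constants then follow from routine AM--GM estimates when solving the quadratic inequality.
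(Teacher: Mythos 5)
Your proposal tracks the paper's own proof very closely: the reduction of DPGA to PG-ADMM on \eqref{eq:DPGA-II formulation}, the use of Slater's condition (via $\ri(\cap_{i\in\cN}\dom\xi_i)\neq\emptyset$) to get Assumption~\ref{assumption-1}, the specialization of \eqref{eq:PG-key-ineq} to dual vectors of the form $(\alpha,-\alpha)$ so that the dual penalty collapses to $\norm{\alpha}_Q^2$ and the residual pairing to $\alpha^\top(M\otimes I_n)\bar{\bx}^t$, and the collapse of the $Q_i$-deficiency plus the Jensen bound on $\by^0$ into $S\triangleq\sum_{i\in\cN}\tfrac{1}{c_i}\norm{x^*-x_i^0}^2$ are all exactly the paper's steps. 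Your dual construction is a legitimate equivalent variant: the paper takes an arbitrary Slater-guaranteed dual optimal $\theta^*$ and replaces it by the SVD-projected $\bar\theta$, whereas you build the minimum-norm solution of $q=-(M\otimes I_n)^\top\alpha^*$ directly from a zero-sum selection $q_i\in\partial\Phi_i(x^*)$; both rest on Remark~\ref{rem:laplacian} and give $\norm{\alpha^*}^2\leq\sum_{i\in\cN}\kappa_i^2/\sigma_{\min}(\Omega)$ (your route additionally needs the subdifferential sum rule $\partial(\sum_i\Phi_i)(x^*)=\sum_i\partial\Phi_i(x^*)$, which the relative-interior assumption does supply). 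Your upper bound ($\alpha=\mathbf{0}$) and your consensus bound (scaled aligned dual $\alpha=(\norm{\alpha^*}+1)\,\bar d/\norm{\bar d}$ combined with the saddle-point inequality) reproduce the stated constants.

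The one genuine gap is the lower bound on $F(\bar{\bx}^t)-F^*$. Feeding the consensus bound back into $F(\bar{\bx}^t)-F^*\geq-\norm{\alpha^*}\,\norm{\bar d}$ yields the constant
\begin{align*}
\norm{\alpha^*}\left[\norm{Q}\left(\frac{\sum_{i\in\cN}\kappa_i^2}{\sigma_{\min}(\Omega)}+1\right)+\frac{S}{2}\right],
\qquad \norm{\alpha^*}\leq\left(\frac{\sum_{i\in\cN}\kappa_i^2}{\sigma_{\min}(\Omega)}\right)^{1/2},
\end{align*}
which is \emph{not} the stated constant $\frac{2\norm{Q}}{\sigma_{\min}(\Omega)}\sum_{i\in\cN}\kappa_i^2+\frac{S}{2}$: your initialization term $S/2$ acquires the multiplicative factor $\norm{\alpha^*}$ (which can be arbitrarily large), and the $\kappa$-dependence degrades from linear in $\sum_i\kappa_i^2/\sigma_{\min}(\Omega)$ to power $3/2$. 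So the theorem as stated is not established by this step; you only get a different (in general incomparable) $\cO(1/t)$ bound. The repair is the paper's choice: invoke the specialized key inequality once more at $\alpha=2\alpha^*$, and combine with the saddle-point inequality $F(\bar{\bx}^t)-F^*+(\alpha^*)^\top(M\otimes I_n)\bar{\bx}^t\geq 0$ to obtain
\begin{align*}
(\alpha^*)^\top(M\otimes I_n)\bar{\bx}^t \;\leq\; F(\bar{\bx}^t)-F^*+2(\alpha^*)^\top(M\otimes I_n)\bar{\bx}^t \;\leq\; \frac{2\norm{\alpha^*}_Q^2}{t}+\frac{S}{2t},
\end{align*}
whence $F(\bar{\bx}^t)-F^*\geq-(\alpha^*)^\top(M\otimes I_n)\bar{\bx}^t\geq-\frac{1}{t}\big(2\norm{\alpha^*}_Q^2+\frac{S}{2}\big)$, and then apply $\norm{\alpha^*}_Q^2\leq\norm{Q}\,\norm{\alpha^*}^2\leq\frac{\norm{Q}}{\sigma_{\min}(\Omega)}\sum_{i\in\cN}\kappa_i^2$ to land exactly on the stated inequality.
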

\begin{proof}
$x^*$ is an optimal solution to \eqref{eq:central_opt} if and only if $\bx^*\in\reals^{n|\cN|}$ such that $x_i^*=x^*$ for $i\in\cN$ is optimal to \eqref{eq:DPGA-II formulation-compact}; hence, $(\bx^*,\by^*)$ is optimal to \eqref{eq:DPGA-II formulation} for $\by^*\in\reals^{n|\cE|}$ such that $y^*_{ij}=x^*$ for all $(i,j)\in\cE$. The formulation \eqref{eq:DPGA-II formulation-compact}, equivalent to~\eqref{eq:DPGA-II formulation}, can be written as \vspace*{-1mm}
{\small
\begin{flalign}
\label{formulation3-II}
&\min_{\bx}\{F(\bx)\triangleq\sum_{i \in \cN} \Phi_i( x_i ):\ x_i - x_j = \mathbf{0}  : \theta_{ij},\ (i,j) \in \cE\},
\end{flalign}
}%
where $\theta_{ij}\in\reals^n$ denotes the dual variable corresponding to the primal constraint $x_i - x_j = \mathbf{0}$. Since $\ri\big(\bigcap_{i\in\cN}\dom \xi_i\big)\neq\emptyset$, Assumption~\ref{assumption-0} implies the Slater's condition for both \eqref{formulation3-II} and \eqref{eq:DPGA-II formulation}; hence, dual optimal solutions to \eqref{formulation3-II} and \eqref{eq:DPGA-II formulation} exist. Let $\theta^*\triangleq[\theta^{*}_{ij}]_{(i,j)\in\cE}$ and $(\alpha^*,\beta^*)$ denote some dual solutions to \eqref{formulation3-II} and \eqref{eq:DPGA-II formulation}, respectively. Note \eqref{eq:DPGA-II formulation} is a special case of \eqref{prob}, and \eqref{eq:DPGA-II formulation} satisfies Assumption~\ref{assumption-1}; moreover, $\{\bx^k\}_{k\geq 1}$ generated by DPGA in Fig.~\ref{alg:DPGA-II} is the same as with the sequence produced by PG-ADMM applied to \eqref{eq:DPGA-II formulation}. Thus, convergence of the DPGA iterate sequence $\{\bx^k\}_{k\geq 0}$ to an optimal solution follows from Theorem~\ref{thm-ergodic} when $c_i<(L_i+\gamma_i d_i)^{-1}$ for $i\in\cN$; moreover, the bound \eqref{eq:PG-key-ineq} given in Theorem~\ref{lem:ergodic} is valid for the DPGA iterates $\{\bx^k\}$ whenever $c_i\leq(L_i+\gamma_i d_i)^{-1}$ for $i\in\cN$. In the rest, we analyze the error bounds.

From the optimality conditions for \eqref{formulation3-II}, $\theta^*$ is an optimal dual solution to \eqref{formulation3-II} if and only if\vspace*{-1mm}
{\small
\begin{align}
\label{eq:opt-conditions-theta_ij}
\mathbf{0}\in\partial \Phi_i(x^*)+\sum_{j:(i,j)\in\cE}\theta^*_{ij}-\sum_{j:(j,i)\in\cE}\theta^*_{ji},\quad \forall\ i\in\cN,
\end{align}
}%
i.e., $-(M\otimes I_n)^\top\theta^*\in \partial F(\bx^*)$ such that $x^*_i=x^*$ for $i\in\cN$. Similarly, according to first order optimality conditions for \eqref{eq:DPGA-II formulation}, $\alpha^*\in\reals^{n|\cE|}$ and $\beta^*\in\reals^{n|\cE|}$ are dual optimals to \eqref{eq:DPGA-II formulation} if and only if\vspace*{-1mm}
{\small
\begin{align*}
&\mathbf{0} \in\partial \Phi_i(x^*)+\sum_{j:(i,j)\in\cE}\alpha^*_{ij}+\sum_{j:(j,i)\in\cE}\beta^*_{ji},\quad \forall\ i\in\cN,
\end{align*}
}%
and $\alpha^*_{ij}+\beta^*_{ij} =\mathbf{0}$ for all $(i,j)\in\cE$. Therefore, given an optimal dual solution to \eqref{formulation3-II}, say $\theta^*$, one can construct a dual optimal solution $(\alpha^*,\beta^*)$ to \eqref{eq:DPGA-II formulation} by simply setting $\alpha^*=\theta^*$ and $\beta^*=-\theta^*$. 
In the rest of the proof, we fix $(\bx^*,\by^*,\alpha^*,\beta^*)$ as a primal-dual optimal solution to \eqref{eq:DPGA-II formulation} such that $x_i^*=x^*$ for $i\in\cN$, $y^*_{ij}=x^*$, $\alpha^*_{ij}=\theta^*_{ij}$ and $\beta^*_{ij}=-\theta^*_{ij}$ for all $(i,j)\in\cE$, where $\theta^*$ is some dual optimal solution to \eqref{formulation3-II}.

For $i\in\cN$, $c_i=1/(L_i+\gamma_i d_i+\vartheta_i)$ for some $\vartheta_i\geq 0$, and $A_i=(\mathbf{1}_{d_i}\otimes I_n)\in\reals^{n d_i\times n}$; hence,  $A_i^\top A_i = d_i I_n$
and $\frac{1}{c_i}\norm{x_i^0-x^*}_{I_n-\gamma_i c_i A_i^\top A_i}=(L_i+\vartheta_i)\norm{x_i^0-x^*}^2$. Also $\sum_{i=1}^N \gamma_i  B_i^\top B_i=\diag([\gamma_i+\gamma_j]_{(i,j)\in\cE})\otimes I_{n}$; and since we initialize $\by^0$ such that $y_{ij}^0=(\gamma_i x_i^0+\gamma_j x_j^0)/(\gamma_i+\gamma_j)$ and $y_{ij}^*=x^*$ for all $(i,j)\in\cE$, it trivially follows from the convexity of $\norm{\cdot}^2$ that \vspace*{-1mm}
{\small
$$\norm{\by^*-\by^0}^2_{\sum_{i=1}^N\gamma_iB_i^\top B_i}\leq \sum_{(i,j)\in\cE}\left(\gamma_i\norm{x^*-x_i^0}^2+\gamma_j\norm{x^*-x_j^0}^2\right)=\sum_{i\in\cN}d_i\gamma_i \norm{x^*-x_i^0}^2.$$
}%
Recall that we initialize $\alpha^0=\beta^0=\mathbf{0}$. Furthermore, Theorem~\ref{lem:ergodic} holds for all $\alpha=[\alpha_{ij}]_{(i,j)\in\cE}\in\reals^{n|\cE|}$ and $\beta=[\beta_{ij}]_{(i,j)\in\cE}\in\reals^{n|\cE|}$, where $\alpha_{ij},\beta_{ij}\in\reals^{n}$ for $(i,j)\in\cE$. Hence, given $\theta=[\theta_{ij}]_{(i,j)\in\cE}$ for some $\theta_{ij}\in\reals^n$ for all $(i,j)\in\cE$, we set $\alpha_{ij}=\theta_{ij}$ and $\beta_{ij}=-\theta_{ij}$ for all $(i,j)\in\cE$, and invoke Theorem~\ref{lem:ergodic} for this specific choice of $\alpha=\theta$ and $\beta=-\theta$ to obtain the following inequality, customized for \eqref{eq:DPGA-II formulation}: for all $\theta$ and $t\geq 1$, \vspace*{-1mm}
{\small
\begin{align}
F(\bar{\bx}^t) - F^* + \theta^\T (M\otimes I_n)\bar{\bx}^t
&=F(\bar{\bx}^t) - F^* + \sum_{(i,j) \in \cE} \alpha_{ij}^\T (  \bar{x}_i^t - \bar{y}_{ij}^t) + \beta_{ij}^\T (\bar{x}_j^t - \bar{y}_{ij}^t), \nonumber\\
&\leq \; \frac{1}{2t} \Big[\sum_{i\in\cN}\Big(\frac{1}{\gamma_i}\sum_{j\in\cO_i} \norm{\theta_{ij}}^2\Big) + \sum_{i \in \cN} (L_i+\vartheta_i) \norm{x^* - x_{i}^0  }^2 + \norm{\by^*-\by^0}^2_{\sum_{i=1}^N\gamma_iB_i^\top B_i}\Big], \nonumber \\
&\leq \; \frac{1}{2t}\Big(\sum_{(i,j)\in\cE}\Big(\frac{1}{\gamma_i}+\frac{1}{\gamma_j}\Big)\norm{\theta_{ij}}^2 + \sum_{i \in \cN} \frac{1}{c_i} \norm{x^* - x_{i}^0  }^2\Big). \label{eq:ergodic-II}
\end{align}
}%
Hence, setting $\theta=\mathbf{0}$ in \eqref{eq:ergodic-II} leads to \vspace*{-1mm}
{\small
\begin{align}
\label{eq:upper-subopt-II}
F(\bar{\bx}^t) & - F^*
\leq \; \frac{1}{t} \sum_{i \in \cN} \frac{1}{2c_i} \norm{x^* - x_{i}^0  }^2.
\end{align}
}%
From convexity of $\Phi_i$, \eqref{eq:opt-conditions-theta_ij} and the fact that $(M\otimes I_n)\bx^*=\mathbf{0}$, it follows that \vspace*{-1mm}
{\small
\begin{align}
\label{subgradient-II}
0 \leq F( \bar{\bx}^t ) - F^* + {\theta^*}^\top (M\otimes I_n)(\bar{\bx}^t-\bx^*)=F( \bar{\bx}^t ) - F^* + {\theta^*}^\top (M\otimes I_n)\bar{\bx}^t.
\end{align}
}%
Adding 
${\theta^*}^\top (M\otimes I_n)\bar{\bx}^t$ to both sides, and invoking \eqref{eq:ergodic-II} for $\theta$ such that $\theta_{ij}=2\theta_{ij}^*$ for $(i,j)\in\cE$, we get \vspace*{-1mm}
{\small
\begin{align}
{\theta^*}^\T (M\otimes I_n)\bar{\bx}^t \leq F( \bar{\bx}^t ) - F^* + (2\theta^*)^\T (M\otimes I_n)\bar{\bx}^t \leq \frac{1}{t}\left( 2 \norm{\theta^*}_Q^2 + \sum_{i \in \cN} \frac{1}{2c_i} \norm{x^* - x_{i}^0  }^2\right). \label{eq:lower-subopt-II}
\end{align}
}%
Invoking \eqref{eq:ergodic-II} once again for $ \theta = \theta^* + (M\otimes I_n) \bar{\bx}^t / \norm{ (M\otimes I_n) \bar{\bx}^t }$ and using \eqref{subgradient-II}, we get 
{\small
\begin{align}
\left(\sum_{(i,j)\in\cE}\norm{\bar{x}_i^t-\bar{x}^t_j}^2\right)^{\tfrac{1}{2}}=
\norm{(M\otimes I_n) \bar{\bx}^t } &\leq F( \bar{x}^t ) - F^* + {\theta^*}^\T (M\otimes I_n) \bar{\bx}^t + \norm{(M\otimes I_n) \bar{\bx}^t }, \nonumber \\
& \leq \frac{1}{2t} \left( \norm{\theta^*+\frac{(M\otimes I_n) \bar{\bx}^t}{\norm{(M\otimes I_n) \bar{\bx}^t}}}_Q^2 + \sum_{i \in \cN} \frac{1}{c_i}\norm{x_i^* - x_{i}^0}^2\right),\nonumber \\ 
&\leq  \frac{1}{t}\left(\sigma_{\max}(Q)+\norm{\theta^*}_Q^2 + \sum_{i\in\cN}\frac{1}{2c_i}\norm{x^* - x_{i}^0}^2\right), \label{consensus_ineq-II}
\end{align}
}%
where the last inequality follows from 
$\norm{a+b}_Q^2\leq 2\norm{a}_Q^2+2\norm{b}_Q^2$. Next, by appropriately choosing a dual optimal \eqref{formulation3-II}, we bound $\norm{\theta^*}_Q$, which appears in \eqref{eq:lower-subopt-II} and \eqref{consensus_ineq-II}.

According to Remark~\ref{rem:laplacian}, since $(M\otimes I_n)^\top \theta^*$ lies in the row space, there exists $\{ \lambda_r \}_{r=1}^{R}$ such that $-(M\otimes I_n)^\top \theta^* = \sum_{r = 1}^{R} \lambda_r v_r$. Following~\cite{makhdoumi2014broadcast}, choose $\bar{\theta} \triangleq - \sum_{r = 1}^{R} u_r\lambda_r/\sigma_r=U\diag(\lambda/\sigma)$, where $\diag(\lambda/\sigma)\in\reals^{R\times R}$ is a diagonal matrix with its $r$-th diagonal entry is $\lambda_r/\sigma_r$ for $r=1,\ldots,R$. Hence, $\bar{\theta}\in\reals^{n|\cE|}$ satisfies $-(M\otimes I_n)^\T \bar{\theta} = V \Sigma U^\T U \diag(\lambda/\sigma) = - (M\otimes I_n)^\T \theta^*$; therefore, $-(M\otimes I_n)^\T \bar{\theta}\in\partial F(\bx^*)$ as well. Recall that the optimality conditions stated in \eqref{eq:opt-conditions-theta_ij} 
can be written more compactly as $\mathbf{0}\in\partial F(\bx^*)+(M\otimes I_n)^\top \theta^*$; hence, $\bar{\theta}$ is an optimal dual solution to \eqref{formulation3-II} as well. Finally, using the local bounds $\kappa_i$ on the subdifferential of $\Phi_i$ at $x^*\in\reals^n$, we can bound $\norm{\bar{\theta}}$ from above as follows: \vspace*{-1mm}
{\small
\begin{align}
\norm{\bar{\theta}}^2 &= \sum_{r = 1}^{R} \frac{\lambda_r^2}{\sigma_r^2} \leq \frac{1}{\psi_{N-1}} \sum_{r = 1}^{R} \lambda_r^2 = \frac{1}{\sigma_{\min}(\Omega)} \norm{(M\otimes I_n)^\top \theta^*}^2. \nonumber
\end{align}
}%
Hence, $\norm{\bar{\theta}}_Q^2\leq\frac{\sigma_{\max}(Q)}{\sigma_{\min}(\Omega)}\sum_{i\in\cN}\kappa_i^2$. Thus, using this bound within \eqref{eq:lower-subopt-II} and \eqref{consensus_ineq-II}, and combining the resulting inequalities together with \eqref{eq:upper-subopt-II} and \eqref{subgradient-II} implies the desired bounds.
\end{proof}
Suppose 
$x_i^0=x^0$, $c_i=(L_i+\gamma d_i)^{-1}$ and $\gamma_i=\gamma$ for $i\in\cN$ for some $x^0\in\reals^n$ and $\gamma>0$. The bounds in Theorem~\ref{thm:DPGA-II} can be simplified further. Indeed, observe that $\sum_{i \in \cN} \frac{1}{2c_i} \norm{x^*-x_i^0}^2
=(\gamma|\cE|+\sum_{i\in\cN}L_i/2)\norm{x^*-x^0}^2$. Therefore, for all $t\geq 1$, \vspace*{-1mm}
{\small
\begin{align*}
\max \Big\{ |F(\bar{\bx}^t)-F^*|, \ \Big(\sum_{(i,j)\in\cE} \norm{\bar{x}_i^t-\bar{x}^t_j}^2 \Big)^{\tfrac{1}{2}} \Big\}
\leq  \frac{1}{t}\left[\frac{4}{\gamma}\left(\sum_{i\in\cN}\frac{\kappa_i^2}{\sigma_{\min}(\Omega)}+1\right)+ \left(\gamma |\cE| +\sum_{i \in \cN} \frac{L_i}{2}\right) \norm{x^*-x^0}^2\right].
\end{align*}
}%
Note that $\gamma^*=\frac{2}{\norm{x^0-x^*}}\sqrt{\Big(\sum_{i\in\cN}\frac{\kappa_i^2}{\sigma_{\min}(\Omega)}+1\Big)/|\cE|}$ is an optimal choice for constant penalty.
\subsection{DPGA-W Algorithm for weighted communication networks}
\label{sec:dpga1}
A formulation for \emph{weighted} networks that is suitable for implementing PG-ADMM in~\eqref{GADM-extension} or SPG-ADMM in~\eqref{GADM-stoc-extension} follows from~\cite{makhdoumi2014broadcast} using communication matrices. In particular, $W\in\reals^{|\cN|\times |\cN|}$ is called a \emph{communication matrix} if for all $i\in\cN$, $W_{ij}=0$ for all $j\not\in\cN_i\cup\{i\}$, $W_{ij}<0$ for all $i\in\cN_i$, and $W_{ii}=-\sum_{j\in\cN_i}W_{ij}$. It is easy to show that for  $\bx=[x_i]_{i\in\cN}\in\reals^{n|\cN|}$ satisfying $(W\otimes I_n)~\bx = \mathbf{0}$, we have $x_i=\bar{x}$ for all $i\in\cN$ for some $\bar{x}\in\reals^n$. Therefore, given $W$ with properties above,
\eqref{eq:dist_opt} can be equivalently written as \vspace*{-1mm}
{\small
\begin{align}
\label{eq:DPGA-I formulation-compact}
\min_{\bx \in \reals^{n|\cN|}}\Big\{F(\bx)\triangleq\sum_{i\in\cN}\Phi_i(x_i):\ (W\otimes I_n)\bx=\mathbf{0}\Big\},
\end{align}}%
where 
$\Phi_i$ is defined in \eqref{eq:F_i}. Note that the Laplacian $\Omega$ of the graph $\cG$ is also a \emph{communication matrix}, and can also be used to model unweighted networks.
In the rest of this section, assume that \eqref{eq:central_opt} has a solution, and \eqref{eq:DPGA-I formulation-compact} satisfies Assumption~\ref{assumption-1}.

For each $i\in\cN$, define new set of primal variables $y_{ij}\in\reals^n$ for $j\in\cN_i\cup\{i\}$, and form $\by_i\triangleq[y_{ij}]_{j\in\cN_i\cup\{i\}}\in\reals^{(d_i+1)n}$. Let $\cY_i\triangleq\{\by_i:\ \sum_{j\in\cN_i\cup\{i\}}y_{ij}=\mathbf{0}\}$ for $i\in\cN$, and define $g(\by)\triangleq \sum_{i \in \cN}  \mathds{1}_{\cY_i}(\by_i)$, where $\mathds{1}_{\cY_i}$ denotes the indicator function of the set $\cY_i$, i.e., $\mathds{1}_{\cY_i}(\by_i)$ is equal to $0$ if $\by_i\in\cY_i$, and to $+\infty$ otherwise, where $\by^\T=[\by_1^\T,\ldots,\by_N^\T]$. Hence, consider the following equivalent formulation proposed in~\cite{makhdoumi2014broadcast}: \vspace*{-1mm}
{\small
\begin{align}
\label{eq:DPGA-I formulation}
\min_{\by,\{x_i\}_{i\in\cN}} \quad &g(\by)+\sum_{i\in\cN} \Phi_i(x_i) \quad
\text{s.t.} \quad W_{ij}x_j-y_{ij}=0:\ \lambda_{ij},\quad \forall~j\in\cN_i\cup\{i\},\ \forall~i\in\cN,
\end{align}}%
where $\lambda_{ij}\in\reals^n$ denotes the Lagrange multiplier corresponding to the primal constraint $W_{ij}x_j-y_{ij}=0$ in \eqref{eq:DPGA-I formulation}. 
Define $\lambda_i=[\lambda_{ij}]_{j\in\cN_i\cup\{i\}}\in\reals^{(d_i+1)n}$, and $\lambda=[\lambda_i]_{i\in\cN}\in\reals^{n(2|\cE|+|\cN|)}$.

Clearly 
\eqref{eq:DPGA-I formulation} is a special case of \eqref{prob}, and one can employ PG-ADMM or SPG-ADMM to solve \eqref{eq:DPGA-I formulation}. In the rest, we focus on the implementation details of PG-ADMM. Indeed, it can be easily observed that $A_i$ of \eqref{prob} takes the following form for \eqref{eq:DPGA-I formulation}: $A_i=(\mathbf{\omega}_i\otimes I_n)\in\reals^{n (d_i+1)\times n}$, where $\mathbf{\omega}_i\in\reals^{d_i+1}$ is obtained from the $i$-th column of $W$ after $0$ entries are removed without changing the order of non-zero entries. Furthermore, for the constraint matrix 
in \eqref{eq:DPGA-I formulation},
{$\norm{\by^*-\by^0}^2_{\sum_{i=1}^N\gamma_iB_i^\top B_i}$ in Theorem~\ref{lem:ergodic} is equal to $\sum_{i\in\cN}\gamma_i\sum_{j\in\cN_i\cup\{i\}}(y^0_{ji}-y^*_{ji})^2$, and $B=[B_i]_{i\in\cN}$, obtained by vertically concatenating $\{B_i\}_{i\in\cN}$, has full column rank.}
For all $i\in\cN$, we set the stepsize $c_i$ according to Theorems~\ref{lem:ergodic} and \ref{thm-ergodic}.
Since $\sigma_{\max}(A_i)=\norm{\omega_i}$ for $i\in\cN$ for the formulation \eqref{eq:DPGA-I formulation}, this corresponds to setting $c_i\leq 1/(L_i+\gamma_i \norm{\omega_i}^2)$.

The smooth part of the augmented Lagrangian $\phi_\gamma$ 
for the formulation \eqref{eq:DPGA-I formulation} can be written as \vspace*{-1mm}
{\small
\begin{align*}
\phi_\gamma(\bx,\by,\lambda)=\sum_{i\in\cN}\left[f_i(x_i)+\sum_{j\in\cN_i\cup\{i\}}\lambda_{ij}^\top(W_{ij}x_j-y_{ij})
+\sum_{j\in\cN_i\cup\{i\}}\frac{\gamma_j}{2}\norm{W_{ij}x_j-y_{ij}}^2\right],
\end{align*}
}%
for node-specific penalty parameters $\{\gamma_i\}_{i\in\cN}\subset\reals_{++}$; hence, $\grad_{x_j}\phi_\gamma$ can be computed as \vspace*{-1mm}
{\small
\begin{align*}
\grad_{x_j}\phi_\gamma(\bx^k,\by^k,\lambda^k)=\grad f(x_j^k)+\sum_{i \in \cN_j\cup\{j\}}W_{ij} \Big[ \lambda_{ij}^k + \gamma_j (W_{ij} x_j^k - y_{ij}^k ) \Big]
\end{align*}
}%
and the steps of PG-ADMM in \eqref{GADM-extension} take the following form: \vspace*{-1mm}
{\small
\begin{subequations}\label{eq:DPGA-W-updates}
\begin{align}
x_j^{k+1} &= \prox{c_j\xi_j} \left( x_j^k - c_j\grad_{x_j}\phi_\gamma(\bx^k,\by^k,\lambda^k) \right), \quad j \in \cN, \label{eq:update-x1} \\
\by_i^{k+1} &= \argmin_{\by_i} \left\{ 
\sum\limits_{j \in \cN_i\cup\{i\}}\frac{\gamma_j}{2} \left\|  W_{ij} x_j^{k+1} - y_{ij} + \frac{1}{\gamma_j} \lambda_{ij}^k \right\|^2:\ \sum_{j\in\cN_i\cup\{i\}}y_{ij}=0\right\}, \quad i \in \cN, \label{eq:update-z1}  \\
\lambda_{ij}^{k+1} &= \lambda_{ij}^k + \gamma_j \left( W_{ij} x_j^{k+1} - y_{ij}^{k+1} \right), \quad j\in\cN_i\cup\{i\},\ i\in \cN. \label{eq:update-lambda1}
\end{align}
\end{subequations}
}%
Except for $x$-step in \eqref{eq:update-x1}, the $y$-step in \eqref{eq:update-z1} and $\lambda$-step in \eqref{eq:update-lambda1} are exactly the same as those in~\cite{makhdoumi2014broadcast} when for all $i\in\cN$, $\gamma_i=\gamma$ for some $\gamma>0$. Instead of taking proximal gradient step,  $x_j^{k+1}$ is computed in~\cite{makhdoumi2014broadcast} by solving $\min_{x_j\in\reals^n}\Phi_j(x_j) 
+\frac{\gamma}{2}\sum_{i\in\cN_j\cup\{j\}}\norm{W_{ij}x_j-y^k_{ij}+\frac{\lambda^k_{ij}}{\gamma}}^2$, which is equivalent to computing $\prox{\xi_j+f_j}$. Note that even if both $\xi_j$ and $f_j$ have simple prox maps, the prox map of the sum is not necessarily simple and it can be impractical to compute. 

Since $y$-step and $\lambda$-step are the same as those in~\cite{makhdoumi2014broadcast}, the results of this paragraph directly follow from Section~III of~\cite{makhdoumi2014broadcast}. Let $\{x_i^0\}_{i\in\cN}$ denote the set of initial primal iterates. For $k\geq 0$, let $p_i^{k+1}$ be the optimal Lagrange multiplier corresponding to $\by_i\in\cY_i$ constraint in \eqref{eq:update-z1}; hence, $y_{ij}^{k+1}=W_{ij}x_j^{k+1}+\frac{1}{\gamma_j}(\lambda_{ij}^k-p_i^{k+1})$. On the other hand, combining this equality with \eqref{eq:update-lambda1}, we conclude that $\lambda_{ij}^{k+1}=p_i^{k+1}$ for all $k\geq 0$. Moreover, since $\by_i^{k+1}\in\cY_i$, the optimal dual $p_i^{k+1}$ can be computed using the recursion: {$p_i^{k+1}=p_i^k+ (\sum_{j\in\cN_i\cup\{i\}}\frac{1}{\gamma_j})^{-1}\sum_{j\in\cN_i\cup\{i\}}W_{ij}x_j^{k+1}$} for $k\geq 0$. Suppose that we initialize $\lambda_{ij}^0=p_i^0$ for all $j\in\cN_i\cup\{i\}$ for some given $p_i^0$ for all $i\in\cN$.
Finally, by defining $s_i^0=\mathbf{0}$ and {$s_i^k \triangleq (\sum_{j\in\cN_i\cup\{i\}}\frac{1}{\gamma_j})^{-1}\sum_{j\in\cN_i\cup\{i\}}W_{ij}x_j^{k}$} for $k\geq 1$, and initializing 
$y_{ij}^0\triangleq W_{ij}x_j^0$ for all $j\in\cN_i\cup\{i\}$ and $i\in\cN$, the computation of $\grad_{x_j}\phi_\gamma$ in~\eqref{eq:update-x1} can be simplified. Indeed, for any $j\in\cN$, $\lambda_{ij}^k + \gamma_j (W_{ij} x_j^k - y_{ij}^k )=2p_{i}^k - p_{i}^{k-1}$ for all $i\in\cN_j\cup\{j\}$; hence, \vspace*{-1mm}
{\small
\[\grad_{x_j}\phi_\gamma(\bx^k,\by^k,\lambda^k)-\nabla f_j(x_j^k)=\sum_{i \in \cN_j\cup\{j\}} W_{ij} \left(\lambda_{ij}^k + \gamma_j (W_{ij} x_j^k - y_{ij}^k ) \right)
=\sum_{i \in \cN_j\cup\{j\}} W_{ij} \left(p_{i}^k + s_i^k\right).\]
}%
holds for $k\geq 0$. Note that this is true for $k=0$ because of how we initialize $\lambda^0$ and $\by^0$. Therefore, the steps in \eqref{eq:update-x1}-\eqref{eq:update-lambda1} can be simplified as shown in Figure~\ref{alg:DPGA-I} below.
\begin{figure}[htpb]
\centering
\framebox{\parbox{\columnwidth - 8pt}{
{\small
\textbf{Algorithm DPGA-W} ( $\bx^{0},\bp^0,\{\gamma_i\}_{i\in\cN}$ ) \\[1.5mm]
Initialization: $c_i<(L_i+\gamma_i \norm{\omega_i}^2)^{-1}$,\quad $s_i^0=\mathbf{0}$, \quad $i\in\cN$\\
Step $k$: ($k \geq 0$) For $i\in\cN$ compute\\[1mm]
\text{ } 1. $x_i^{k+1} = \prox{c_i\xi_i}\left( x_i^k - c_i \left(\nabla f_i(x_i^k) + \sum_{j \in \cN_i\cup\{i\}}  W_{ij} ( p_j^k + s_j^k   ) \right) \right), \quad i \in \cN$  \\[1mm]
\text{ } 2. $s_i^{k+1} =  (\sum_{j\in\cN_i\cup\{i\}}\frac{1}{\gamma_j})^{-1}\sum_{j\in\cN_i\cup\{i\}}W_{ij}x_j^{k+1}, \quad i \in \cN$\\[1mm]
\text{ } 3. $p_i^{k+1} = p_i^{k} + s_i^{k+1}, \quad i \in \cN$
 }}}
\caption{\small Distributed Proximal Gradient Algorithm for Weighted Networks (DPGA-W)}
\label{alg:DPGA-I}
\end{figure}

As in DPGA, to be able compute $s_i^k$ updates DPGA-W requires each node $i\in\cN$ to send $\gamma_i$ to and receive $\gamma_j$ from all its neighbors $j\in\cN_i$ once at the beginning. Moreover, stepsize $c_i$ depends on $\omega_i$, which is formed by the weights $W_{ji}$ assigned to $i$ by all its neighbors $j\in\cN_i$; therefore, assigned weights are exchanged among neighbors once at the beginning as well. Note that while DPGA-W can be applied to more general weighted communication networks, it requires each node to communicate two times with its neighbors per iteration, in contract to \emph{one} time for DPGA.
\subsubsection{Error Bounds for DPGA-W \& Effect of Network Topology}
In this section, we examine the effect of network topology on the convergence rate of DPGA-W, which is nothing but PG-ADMM customized to the decentralized formulation in \eqref{eq:DPGA-I formulation} as discussed in Section~\ref{sec:dpga1}. 
To obtain simple $\cO(1)$ constants in the error bounds, we set $p_i^0=\mathbf{0}$ for all $i\in\cN$.
\begin{theorem}
\label{thm:DPGA-I}
Suppose a solution to \eqref{eq:central_opt}, $x^*\in\reals^n$, exists and {$\ri(\cap_{i\in\cN}\dom \xi_i)\neq\emptyset$}. Given arbitrary $\{x^0_i,\gamma_i\}_{i\in\cN}$, let $p_i^0=\mathbf{0}$ for all $i\in\cN$, and $\{\bx^k\}_{k\geq 1}$ be the DPGA-W iterate sequence, generated as shown in Figure~\ref{alg:DPGA-I}, {converges to an optimal solution} to \eqref{eq:dist_opt}. Moreover, the average sequence $\{\bar{\bx}^t\}_{t\geq 1}$ 
satisfies the following bounds for all $t\geq 1$, \vspace*{-1mm}
{\small
\begin{align*}
-\frac{1}{t}\left( \frac{2\tau_{\max}}{\sigma^2_{\min}(W)}\sum_{i\in\cN}\kappa_i^2 + \sum_{i \in \cN} \frac{1}{2c_i} \norm{x_{i}^0-x^*}^2\right) \leq F(\bar{\bx}^t)-F^*\leq \frac{1}{t}\left(\sum_{i \in \cN} \frac{1}{2c_i} \norm{x_{i}^0-x^*}^2\right),\\ 
\norm{(\Omega\otimes I_n) \bar{\bx}^t }\leq \frac{1}{t}\left(\tau_{\max}\left(\sum_{i\in\cN}\frac{\kappa_i^2}{\sigma^2_{\min}(W)}+1\right)+ \sum_{i \in \cN} \frac{1}{2c_i} \norm{x_i^0-x^*}^2\right),
\end{align*}}%
where $\kappa_i>0$ denotes an upper bound on the elements of $\partial \Phi_i(x^*)$, i.e., if $q\in\partial \Phi_i(x^*)$, then $\norm{q}\leq \kappa_i$, for each $i\in\cN$, and $\tau_{\max}\triangleq\max_{i\in\cN}\sum_{j\in\cN_i}\frac{1}{\gamma_j}$.
\end{theorem}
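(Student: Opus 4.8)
The plan is to reuse the argument of Theorem~\ref{thm:DPGA-II} almost verbatim, since DPGA-W is PG-ADMM applied to~\eqref{eq:DPGA-I formulation}, itself a special case of~\eqref{prob}. First I would record the primal--dual correspondence: $x^*$ solves~\eqref{eq:central_opt} iff $\bx^*$ with $x_i^*=x^*$ solves the compact form~\eqref{eq:DPGA-I formulation-compact}; since $\ri(\cap_{i\in\cN}\dom\xi_i)\neq\emptyset$, Slater's condition holds, so a dual optimal $\theta^*=[\theta_i^*]_{i\in\cN}$ for the constraint $(W\otimes I_n)\bx=\mathbf 0$ exists and satisfies $-(W\otimes I_n)^\top\theta^*\in\partial F(\bx^*)$, with the split multipliers recovered via $\lambda_{ij}^*=\theta_i^*$ and $y_{ij}^*=W_{ij}x^*$ for all $j\in\cN_i\cup\{i\}$. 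Because $B=[B_i]_{i\in\cN}$ has full column rank and each $c_i<(L_i+\gamma_i\norm{\omega_i}^2)^{-1}$, convergence of $\{\bx^k\}$ to an optimal point is inherited directly from Theorem~\ref{thm-ergodic}, while the ergodic bound~\eqref{eq:PG-key-ineq} of Theorem~\ref{lem:ergodic} remains valid for $c_i\leq(L_i+\gamma_i\norm{\omega_i}^2)^{-1}$.

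Next I would specialize~\eqref{eq:PG-key-ineq}. With $A_i=\omega_i\otimes I_n$ one has $A_i^\top A_i=\norm{\omega_i}^2 I_n$, so writing $c_i=(L_i+\gamma_i\norm{\omega_i}^2+\vartheta_i)^{-1}$ the $x$-initialization term becomes $(L_i+\vartheta_i)\norm{x^*-x_i^0}^2$; with the initialization $y_{ij}^0=W_{ij}x_j^0$ the term $\norm{\by^*-\by^0}^2_{\sum_i\gamma_iB_i^\top B_i}$ equals $\sum_{i\in\cN}\gamma_i\norm{\omega_i}^2\norm{x_i^0-x^*}^2$, and adding the two collapses everything to $\sum_{i\in\cN}\tfrac{1}{c_i}\norm{x^*-x_i^0}^2$, exactly as in~\eqref{eq:ergodic-II}. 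The key algebraic step is the feasibility term under the structured dual $\lambda_{ij}=\theta_i$: because every iterate obeys $\by_i^k\in\cY_i$, i.e.\ $\sum_{j\in\cN_i\cup\{i\}}y_{ij}=\mathbf 0$, the $\bar y$-contributions cancel and $\sum_{i\in\cN}\sum_{j\in\cN_i\cup\{i\}}\theta_i^\top(W_{ij}\bar x_j^t-\bar y_{ij}^t)=\theta^\top(W\otimes I_n)\bar\bx^t$, while the dual penalties sum to $\sum_{i\in\cN}\big(\sum_{j\in\cN_i\cup\{i\}}\tfrac{1}{\gamma_j}\big)\norm{\theta_i}^2$, whose coefficients are controlled by $\tau_{\max}$. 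This gives the DPGA-W analogue of~\eqref{eq:ergodic-II}: for all $\theta$ and $t\geq1$, $F(\bar\bx^t)-F^*+\theta^\top(W\otimes I_n)\bar\bx^t\leq\tfrac{1}{2t}\big(\sum_{i\in\cN}(\sum_{j\in\cN_i\cup\{i\}}\tfrac{1}{\gamma_j})\norm{\theta_i}^2+\sum_{i\in\cN}\tfrac{1}{c_i}\norm{x^*-x_i^0}^2\big)$.

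The suboptimality bounds then follow as in Theorem~\ref{thm:DPGA-II}. Setting $\theta=\mathbf 0$ gives the upper estimate $F(\bar\bx^t)-F^*\leq\tfrac1t\sum_{i\in\cN}\tfrac1{2c_i}\norm{x^*-x_i^0}^2$. For the lower estimate, convexity of each $\Phi_i$ together with $-(W\otimes I_n)^\top\theta^*\in\partial F(\bx^*)$ and $(W\otimes I_n)\bx^*=\mathbf 0$ yields the analogue of~\eqref{subgradient-II}, namely $0\leq F(\bar\bx^t)-F^*+{\theta^*}^\top(W\otimes I_n)\bar\bx^t$; evaluating the ergodic inequality at $\theta=2\theta^*$ and rearranging then bounds $F^*-F(\bar\bx^t)$ by $\tfrac1t\big(2\tau_{\max}\norm{\theta^*}^2+\sum_{i\in\cN}\tfrac1{2c_i}\norm{x^*-x_i^0}^2\big)$.

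The remaining work, which I expect to be the crux, is the consensus bound and the estimate of $\norm{\theta^*}$. Following~\eqref{consensus_ineq-II}, I would evaluate the ergodic inequality at $\theta=\theta^*+\phi$ with $\phi$ a controlled-norm perturbation in the disagreement direction, and combine it with the analogue of~\eqref{subgradient-II} and the split $\norm{a+b}^2\leq2\norm a^2+2\norm b^2$ to bound the constraint residual. The genuinely new wrinkle, absent from Theorem~\ref{thm:DPGA-II}, is that the ergodic left-hand side is driven by $W$ whereas the residual reported in the statement is the Laplacian quantity $\norm{(\Omega\otimes I_n)\bar\bx^t}$; the main obstacle is therefore to choose $\phi$ so that this residual appears on the left while its norm stays controlled purely by $\sigma_{\min}(W)$, using that $W\otimes I_n$ and $\Omega\otimes I_n$ share the consensus-orthogonal row space. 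Finally, to bound $\norm{\theta^*}$ I would mimic Remark~\ref{rem:laplacian}: taking the reduced SVD $W\otimes I_n=U\Sigma V^\top$ and replacing $\theta^*$ by the minimum-norm dual $\bar\theta=-U\diag(\mu/\sigma)$ satisfying the same relation $-(W\otimes I_n)^\top\bar\theta\in\partial F(\bx^*)$, the bound $\sigma_r\geq\sigma_{\min}(W)$ gives $\norm{\bar\theta}^2\leq\sigma_{\min}^{-2}(W)\norm{(W\otimes I_n)^\top\theta^*}^2\leq\sigma_{\min}^{-2}(W)\sum_{i\in\cN}\kappa_i^2$, where the last inequality uses the local subgradient bounds $\norm{q}\leq\kappa_i$ for $q\in\partial\Phi_i(x^*)$. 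Inserting these estimates, with $\tau_{\max}$ as the multiplier of the $\kappa$-term, reproduces the stated upper and lower bounds.
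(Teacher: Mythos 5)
Your proposal matches the paper's proof essentially step for step: the node-based reformulation \eqref{formulation3-I} with structured duals $\lambda_{ij}^*=p_i^*$, the specialization of \eqref{eq:PG-key-ineq} in which $\sum_{j\in\cN_i\cup\{i\}}\bar{y}_{ij}^t=\mathbf{0}$ eliminates the $\bar{\by}$-terms and the order-of-summation swap yields the weights $\tau_i=\sum_{j\in\cN_i\cup\{i\}}\tfrac{1}{\gamma_j}$, the three dual evaluations $\bp=\mathbf{0}$, $\bp=2\bp^*$, and $\bp=\bp^*+(W\otimes I_n)\bar{\bx}^t/\norm{(W\otimes I_n)\bar{\bx}^t}$, and the minimum-norm dual built from the spectral decomposition of $W\otimes I_n$, giving $\norm{\bar{\bp}}^2\leq\sigma_{\min}^{-2}(W)\sum_{i\in\cN}\kappa_i^2$. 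The one place you depart from the paper is the step you call the crux, converting the $W$-residual into the $\Omega$-residual appearing in the statement: the paper performs no such conversion --- its proof bounds $\norm{(W\otimes I_n)\bar{\bx}^t}$ using exactly the unit perturbation above and stops there, so the $\Omega$ in the statement is best read as standing for $W$ (consistent with the remark immediately following the theorem, which specializes to $W=\Omega$, where the two quantities coincide). Your proposed bridge is feasible in principle, since $(\Omega\otimes I_n)z$ lies in the range of $W\otimes I_n$ whenever $z$ is orthogonal to the consensus subspace, so $\phi=(W\otimes I_n)^{\dagger}(\Omega\otimes I_n)z$ exists; but then $\norm{\phi}$ is only controlled by $\sigma_{\max}(\Omega)/\sigma_{\min}(W)$ rather than by $1$, so carrying it out would replace the ``$+1$'' inside the stated consensus bound by $\sigma^2_{\max}(\Omega)/\sigma^2_{\min}(W)$ and would not reproduce the stated constants for a general communication matrix $W$. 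In short, your argument is complete and correct for the $\norm{(W\otimes I_n)\bar{\bx}^t}$ bound, which is precisely what the paper itself proves; the residual mismatch you flagged is a defect of the theorem statement, not a gap you need to fill.
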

\begin{proof}
$x^*\in\reals^n$ is an optimal solution to \eqref{eq:central_opt} if and only if $\bx^*\in\reals^{n|\cN|}$ such that $x_i^*=x^*$ for all $i\in\cN$ is optimal to \eqref{eq:DPGA-I formulation-compact}; hence, $(\bx^*,\by^*)$ is optimal to \eqref{eq:DPGA-I formulation} for $\by^*\in\reals^{n(2|\cE|+|\cN|)}$ such that $y^*_{ij}=W_{ij}x^*$ for all $j\in\cN_i\cup\{i\}$ for $i\in\cN$. 
\eqref{eq:DPGA-I formulation-compact}, equivalent to~\eqref{eq:DPGA-I formulation}, can be written as \vspace*{-1mm}
{\small
\begin{align}
\label{formulation3-I}
\min_{\bx} \left\{\sum_{i \in \cN} \Phi_i( x_i ):\ \sum_{j\in\cN_i\cup\{i\}}W_{ij}x_j = \mathbf{0}  : p_{i} \quad \forall i \in \cN \right\},
\end{align}
}%
where $p_{i}\in\reals^n$ denotes the dual variable corresponding to the primal constraint for $i\in\cN$. Since $\ri(\bigcap_{i\in\cN}\dom \xi_i)\neq\emptyset$, Assumption~\ref{assumption-0} implies the Slater's condition for both \eqref{formulation3-I} and \eqref{eq:DPGA-I formulation};  hence, dual optimal solutions to \eqref{formulation3-I} and \eqref{eq:DPGA-I formulation} exist. Let $\bp^*\triangleq[p^{*}_{i}]_{i\in\cN}\in\reals^{n|\cN|}$ and $\lambda^*\in\reals^{n(2|\cE|+|\cN|)}$ denote some dual solutions to \eqref{formulation3-I} and \eqref{eq:DPGA-I formulation}, respectively. Note \eqref{eq:DPGA-I formulation} is a special case of \eqref{prob}, and \eqref{eq:DPGA-I formulation} satisfies Assumption~\ref{assumption-1}; moreover, $\{\bx^k\}_{k\geq 1}$ generated by DPGA-W in Fig.~\ref{alg:DPGA-I} is the same as with the sequence produced by PG-ADMM applied to \eqref{eq:DPGA-I formulation}. {Note $B=[B_i]_{i\in\cN}$, obtained by vertically concatenating $\{B_i\}_{i\in\cN}$, has full column rank, and $-\sum_{i=1}^NB_i^\top \lambda_i^0\in\partial g(\by^0)$ since $\lambda^0_{ij}=p_i^0=\mathbf{0}$ for $j\in\cN_i\cup\{i\}$ and $i\in\cN$}. Thus, convergence of the DPGA-W iterate sequence $\{\bx^k\}_{k\geq 0}$ 
follows from Theorem~\ref{thm-ergodic} when $c_i<(L_i+\gamma_i \norm{\omega_i}^2)^{-1}$ for $i\in\cN$; moreover, the bound \eqref{eq:PG-key-ineq} given in Theorem~\ref{lem:ergodic} is valid for the DPGA-W iterates $\{\bx^k\}$ whenever $c_i\leq(L_i+\gamma_i \norm{\omega_i}^2)^{-1}$ for $i\in\cN$. In the rest, we analyze the error bounds.

From the 
optimality conditions for \eqref{formulation3-I}, $\bp^*$ is an optimal dual solution to \eqref{formulation3-I} if and only if \vspace*{-1mm}
{\small
\begin{align}
\label{eq:opt-condition-I}
\mathbf{0}\in\partial \Phi_j(x^*)+\sum_{i\in\cN_j\cup\{j\}}W_{ij}p^*_{i},\quad \forall\ j\in\cN,
\end{align}
}%
i.e., $-(W\otimes I_n)^\top \bp^*\in\partial F(\bx^*)$ such that $x^*_i=x^*$ for $i\in\cN$.
Moreover, 
from the first-order optimality conditions for \eqref{eq:DPGA-I formulation}, $\lambda^*$ is dual optimal to \eqref{eq:DPGA-I formulation} if and only if there exists some $\bp=[p_i]_{i\in\cN}$ such that $\lambda^*_{ij}=p_i$ for all $j\in\cN_i\cup\{i\}$ and $i\in\cN$, and $\mathbf{0}\in\partial \Phi_j(x^*)+\sum_{i\in\cN_j\cup\{j\}}W_{ij}\lambda^*_{ij}$ for all $j\in\cN$. Therefore, given an optimal dual solution to \eqref{formulation3-I}, say $\bp^*$ , one can construct a dual optimal $\lambda^*$ to \eqref{eq:DPGA-I formulation} by simply setting $\lambda_{ij}^*=p_{i}^*$ for all $j\in\cN_i\cup\{i\}$ and $i\in\cN$. In the rest, 
we fix $(\bx^*,\by^*,\lambda^*)$ as a primal-dual optimal solution to \eqref{eq:DPGA-I formulation} such that $x_i^*=x^*$ for $i\in\cN$, $y^*_{ij}=W_{ij}x^*$ and $\lambda^*_{ij}=p^*_i$ for $j\in\cN_i\cup\{i\}$ and $i\in\cN$, where $\bp^*$ is some dual optimal solution to \eqref{formulation3-I}.

For $i\in\cN$, $c_i=1/(L_i+\gamma_i \norm{\omega_i}^2+\vartheta_i)$ for some $\vartheta_i\geq 0$, and $A_i=(\mathbf{\omega}_i\otimes I_n)\in\reals^{n (d_i+1)\times n}$; hence,
$A_i^\top A_i = \norm{\omega_i}^2 I_n$ and
$\frac{1}{c_i}\norm{x_i^0-x^*}_{I_n-\gamma_i c_i A_i^\top A_i}=(L_i+\vartheta_i)\norm{x_i^0-x^*}^2$. 
Moreover, since we initialize $\by^0$ such that $y_{ij}^0=W_{ij}x_j^0$, and $y_{ij}^*=W_{ij}x^*$, for all $j\in\cN_i\cup\{i\}$ and $i\in\cN$, it trivially follows that $\norm{\by^*-\by^0}^2_{\sum_{i=1}^N\gamma_i B_i^\top B_i}=\sum_{i\in\cN}\gamma_i\sum_{j\in\cN_i\cup\{i\}}W_{ji}^2\norm{x^*-x_i^0}^2=\sum_{i\in\cN}\gamma_i\norm{\omega_i}^2\norm{x_i^0-x^*}^2$.

Furthermore, Theorem~\ref{lem:ergodic} holds for all $\lambda=[\lambda_i]_{i\in\cN}\in\reals^{n(2|\cE|+|\cN|)}$, where $\lambda_i=[\lambda_{ij}]_{j\in\cN_i\cup\{i\}}\in\reals^{n(d_i+1)}$ for $i\in\cN$. Hence, given $\bp^\top=[p_1^\top,\ldots,p_N^\top]^\top$ for some $p_i\in\reals^n$ for all $i\in\cN$, we set $\lambda_{ij}=p_i$ for all $j\in\cN_i\cup\{i\}$ and $i\in\cN$, and invoke Theorem~\ref{lem:ergodic} for this specific choice of $\lambda$ to obtain the following inequality, by customizing \eqref{eq:PG-key-ineq} for \eqref{eq:DPGA-I formulation}, for all $t\geq 1$: 
{\small
\begin{align}
F(\bar{\bx}^t)  - F^* + \bp^\top (W\otimes I_n)\bar{\bx}^t  &= F(\bar{\bx}^t)  - F^* + \sum_{i \in \cN} \sum_{j \in \cN_i\cup\{i\}} p_i^\top (W_{ij} \bar{x}_j^t  - \bar{y}_{ij}^t) \nonumber\\
&\leq \; \frac{1}{2t} \sum_{i \in \cN} \Big[\tau_i\norm{p_i}^2 + \frac{1}{c_i}\norm{x_{i}^0-x^*}^2\Big], \label{eq:ergodic-I}
\end{align}
}%
where $\tau_i\triangleq \sum_{j\in\cN_i\cup\{i\}}\frac{1}{\gamma_j}$ for $i\in\cN$, and the above inequality follows from the facts that $\sum_{j\in\cN_i\cup\{i\}}\bar{y}_{ij}^t=\mathbf{0}$ and $c_i=(L_i+\gamma_i \norm{\omega_i}^2+\vartheta_i)^{-1}$. Hence, setting $\bp=\mathbf{0}$ in \eqref{eq:ergodic-I} leads to \vspace*{-1mm}
{\small
\begin{align}
\label{eq:upper-subopt-I}
F(\bar{\bx}^t)  - F^* \leq \frac{1}{t} \sum_{i \in \cN} \frac{1}{2c_i}\norm{x_{i}^0-x^*}^2.
\end{align}
}%
From the convexity of $\Phi_i$, \eqref{eq:opt-condition-I} and the fact $(W\otimes I_n)\bx^*=\mathbf{0}$, it follows that \vspace*{-1mm}
{\small
\begin{align}
\label{subgradient-I}
0 \leq F( \bar{\bx}^t ) - F^* + {\bp^*}^\T (W\otimes I_n)(\bar{\bx}^t-\bx^*)=F( \bar{\bx}^t ) - F^* + {\bp^*}^\T (W\otimes I_n)\bar{\bx}^t.
\end{align}
}%
Adding the last term to both sides, and invoking \eqref{eq:ergodic-I} for $\bp$ such that $p_i=2p_i^*$ for $i\in\cN$, we get \vspace*{-1mm}
{\small
\begin{align}
{\bp^*}^\T (W\otimes I_n)\bar{\bx}^t
\leq F( \bar{\bx}^t ) - F^* + (2\bp^*)^\T (W\otimes I_n)\bar{\bx}^t  
\leq \frac{1}{t} \left(2\tau_{\max}\norm{\bp^*}^2 + \sum_{i\in\cN}\frac{1}{2c_i}\norm{x_{i}^0-x^*}^2\right), \label{func_ineq2}
\end{align}
}%
where $\tau_{\max}\triangleq\max_{i\in\cN}\tau_i$. Invoking \eqref{eq:ergodic-I} once again for $ \bp = \bp^* + (W\otimes I_n) \bar{\bx}^t / \norm{ (W\otimes I_n) \bar{\bx}^t }$ and using \eqref{subgradient-I}, we get \vspace*{-1mm}
{\small
\begin{align}
\norm{(W\otimes I_n) \bar{\bx}^t } &\leq F( \bar{\bx}^t ) - F^* + {\bp^*}^\T (W\otimes I_n) \bar{\bx}^t + \norm{(W\otimes I_n) \bar{\bx}^t } \nonumber \\
& \leq \frac{1}{2t} \sum_{i \in \cN} \left(\tau_i\norm{p_i^*+\frac{\sum_{j\in\cN_i\cup\{i\}}W_{ij}\bar{x}_j^t}{\norm{(W\otimes I_n) \bar{\bx}^t}}}^2 + \frac{1}{c_i}\norm{x_{i}^0-x^*}^2\right)\nonumber \\ 
&\leq  \frac{1}{t}\left(\tau_{\max}(1+\norm{\bp^*}^2) + \sum_{i\in\cN}\frac{1}{2c_i}\norm{x^* - x_{i}^0}^2\right), \label{consensus_ineq}
\end{align}
}%
where the last inequality follows from 
$\norm{a+b}^2\leq 2\norm{a}^2+2\norm{b}^2$.

Next, appropriately choosing a dual optimal to \eqref{formulation3-I}, we bound $\norm{\bp^*}$, which appears in \eqref{func_ineq2} and \eqref{consensus_ineq}.
From the definition of $W\in\reals^{N\times N}$, we have $\Rank(W)=N-1$ and $W\succeq \mathbf{0}$ -- it is diagonally dominant. Let $\{\bar{\psi}_i\}_{i=1}^N$ denote the eigenvalues of $W$ such that $\bar{\psi}_1\geq\ldots\bar{\psi}_{N-1}>\bar{\psi}_N=0$. Suppose $\bar{V} \bar{\Sigma} \bar{V}^\top$ represent the eigenvalue decomposition of $(W\otimes I_n)$, where $\bar{V}=[\bar{v}_1 \ldots \bar{v}_{R}]\in\reals^{n |\cN|\times R}$, $\bar{\Sigma}=\diag(\bar{\sigma})$, 
and $R\triangleq\Rank(W\otimes I_n)=n(N-1)$. Note that $\max\{\bar{\sigma}_r:\ r=1,\ldots,R\}=\bar{\psi}_{1}$, and $\min\{\bar{\sigma}_r:\ r=1,\ldots,R\}=\bar{\psi}_{N-1}>0$. Since $(W\otimes I_n)^\T \bp^*$ is in the row space of $(W\otimes I_n)$, there exists $\{ \lambda_r \}_{r =1}^{R}$ such that $-(W\otimes I_n)^\T \bp^* = \sum_{r=1}^R \lambda_r \bar{v}_r$. Following~\cite{makhdoumi2014broadcast}, choose $\bar{\bp} \triangleq \sum_{r=1}^R \bar{v}_r \lambda_r/\bar{\sigma}_r = V\diag(\lambda/\bar{\sigma})$, where $\diag(\lambda/\bar{\sigma})\in\reals^{R\times R}$ is a diagonal matrix with its $r$-th diagonal entry is $\lambda_r/\bar{\sigma}_r$ for $r=1,\ldots,R$. Hence, $\bar{\bp}\in\reals^{n|\cN|}$ satisfies $-(W\otimes I_n)^\T \bar{\bp} = V \bar{\Sigma} V^\T V \diag(\lambda/\bar{\sigma}) = - (W\otimes I_n)^\T \bp^*$; therefore, $-(W\otimes I_n)^\T \bar{\bp}\in\partial F(\bx^*)$ as well. Using the local bounds $\kappa_i$ on the subdifferential of each $\Phi_i$ at $x^*\in\reals^n$, we can bound $\norm{\bar{\bp}}$ from above: 
\vspace*{-1mm}
{\small
\begin{align*}
\norm{\bar{\bp}}^2 &= \sum_{r = 1}^{R} \left(\frac{\lambda_r}{\bar{\sigma}_r}\right)^2 \leq \frac{1}{\bar{\psi}^2_{N-1}} \sum_{r = 1}^{R} \lambda_r^2 = \frac{1}{\sigma_{\min}^2(W)} \norm{(W\otimes I_n)^\top \bp^*}^2.
\end{align*}
}%
Hence, we can conclude that \vspace*{-2mm}
{\small
\begin{align}
\norm{\bar{\bp}}^2 \leq \frac{1}{\sigma^2_{\min}(W)}\sum_{i\in\cN}\kappa_i^2.
\label{theta_bound-I}
\end{align}
}%
Using \eqref{theta_bound-I} within \eqref{func_ineq2} and \eqref{consensus_ineq}, and combining the resulting inequalities together with \eqref{subgradient-I} and \eqref{eq:upper-subopt-I} implies the desired bounds.
\end{proof}

Suppose $W=\Omega$. When $x_i^0=x^0$, $c_i=(L_i+\gamma\norm{\omega_i}^2)^{-1}$ and $\gamma_i=\gamma$ for $i\in\cN$ for some $x^0\in\reals^n$ and $\gamma>0$, the bounds in Theorem~\ref{thm:DPGA-I} can be simplified. Observe $\sum_{i \in \cN} \frac{1}{c_i} \norm{x^*-x^0}^2=(\gamma\norm{\Omega}_F^2+\sum_{i\in\cN}L_i)\norm{x^0-x^*}^2$, and $\tau_{\max}=(d_{\max}+1)/\gamma$. Therefore, for $t\geq 1$, \vspace*{-1mm}
{\small
\begin{eqnarray*}
\max\left\{|F(\bar{\bx}^t)-F^*|,\ \norm{(\Omega\otimes I_n) \bar{\bx}^t }\right\}\leq \frac{1}{2t} \Big[\frac{4(d_{\max}+1)}{\gamma}\left(\sum_{i\in\cN}\frac{\kappa_i^2}{\sigma^2_{\min}(\Omega)}+1\right)+ \Big(\gamma\norm{\Omega}_F^2+\sum_{i \in \cN} L_i \Big)  \norm{x^*-x^0}^2 \Big].
\end{eqnarray*}
}
\subsection{Stochastic gradient variants of DPGA and DPGA-W}
Using Theorem~\ref{lem:ergodic}, one can provide error bounds for the stochastic gradient variants of DPGA and DPGA-W as corollaries of Theorem~\ref{thm:DPGA-II} and Theorem~\ref{thm:DPGA-I}. 
Both SDPGA and SDPGA-W employ SFO $G_i$ defined in Definition~\ref{eq:def:SFO-II} for $i\in\cN$ instead of accessing to $\grad f_i$. Due to space constraints, we only provide the result for SDPGA, that said the bounds for SDPGA-W immediately follows from the same arguments. In Fig.~\ref{alg:DPGA-II} simply replace $\grad f_i(x_i^k)$ with $G_i(x_i^k,\nu_i^k)$ and set the stepsize at the $k$-th iteration as $c_i^k=(L_i+\gamma_i d_i+1+\sqrt{k})^{-1}$. Then under Assumption~\ref{assumption-2}, slightly modifying the proof of Theorem~\ref{thm:DPGA-II} by invoking the result of Theorem~\ref{lem:ergodic} for the case $\sigma>0$, we immediately obtain the bounds for SDPGA in Corollary~\ref{cor:SDPGA-II}.
\begin{corollary}
\label{cor:SDPGA-II}
Suppose a solution to \eqref{eq:central_opt}, $x^*\in\reals^n$, exists, {$\ri(\cap_{i\in\cN}\dom \xi_i)\neq\emptyset$}, and $D \triangleq \max_{i \in \cN} \sup_{x, x' \in \dom(\xi_i)  }  \norm{x - x'}<+\infty$. Given arbitrary $\{x^0_i,\gamma_i\}_{i\in\cN}$, when $\{c_i^k\}_{k\geq 0}$ is chosen such that $\frac{1}{c_i^k}=\frac{1}{c_i}+\sqrt{k}$ and $c_i=(L_i + \gamma_i\norm{A_i}^2+1)^{-1}$, the SDPGA average sequence $\{\bar{\bx}^t\}_{t\geq 1}$ satisfies the following 
bounds for $\bar{D}=D$ and all $t\geq 1$, \vspace*{-1mm}
{\small
\begin{subequations}\label{coreq:sdpga-bounds}
\begin{align}
\mathbb{E}\left[ |F(\bar{\bx}^t)-F^*|\right]\leq \frac{1}{t}\left( \frac{2\norm{Q}}{\sigma_{\min}(\Omega)}\sum_{i\in\cN}\kappa_i^2 + \sum_{i \in \cN} \frac{1}{2c_i} \norm{x^* - x_{i}^0  }^2\right)+\frac{N}{2\sqrt{t}}(\bar{D}^2+2\sigma^2),\\ 
\mathbb{E}\left[\Big(\sum_{(i,j)\in\cE}\norm{\bar{x}_i^t-\bar{x}^t_j}^2\Big)^{\tfrac{1}{2}}\right]\leq \frac{1}{t}\left[\norm{Q}\left(\sum_{i\in\cN}\frac{\kappa_i^2}{\sigma_{\min}(\Omega)}+1\right)+ \sum_{i \in \cN} \frac{1}{2c_i} \norm{x^*-x_i^0}^2\right]+\frac{N}{2\sqrt{t}}(\bar{D}^2+2\sigma^2),
\end{align}
\end{subequations}}%
where $\kappa_i>0$ denotes a bound on the elements of $\partial \Phi_i(x^*)$, i.e., if $q\in\partial \Phi_i(x^*)$, then $\norm{q}\leq \kappa_i$, for each $i\in\cN$, and {$Q\triangleq\diag([\frac{1}{\gamma_i}+\frac{1}{\gamma_j}]_{(i,j)\in\cE})$.}
\end{corollary}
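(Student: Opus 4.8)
The plan is to observe that SDPGA is nothing but SPG-ADMM in \eqref{GADM-stoc-extension} applied to the consensus formulation \eqref{eq:DPGA-II formulation}, so that the entire argument of Theorem~\ref{thm:DPGA-II} carries over verbatim, the only change being that each appeal to the deterministic bound \eqref{eq:PG-key-ineq} is replaced by the stochastic bound \eqref{eq:SPG-key-ineq} of Theorem~\ref{lem:ergodic}. First I would check that the prescribed stepsize $c_i^k=(L_i+\gamma_i d_i+1+\sqrt{k})^{-1}$ coincides with the rule $\tfrac{1}{c_i^k}=\tfrac{1}{c_i}+\sqrt{k}$, $c_i=(L_i+\gamma_i\norm{A_i}^2+1)^{-1}$ of the $\sigma>0$ branch of Theorem~\ref{lem:ergodic}, since $\norm{A_i}^2=d_i$ for $A_i=\mathbf{1}_{d_i}\otimes I_n$. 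Because Assumptions~\ref{assumption-0}, \ref{assumption-2} and \ref{assumption-1} hold for \eqref{eq:DPGA-II formulation} (Slater's condition follows from $\ri(\cap_{i\in\cN}\dom\xi_i)\neq\emptyset$), the hypotheses of Theorem~\ref{lem:ergodic} are met with $\bar D=D<\infty$.

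Next I would reproduce the reduction leading to \eqref{eq:ergodic-II}: fix $x_i^*=x^*$, $y_{ij}^*=x^*$, choose the multipliers $\alpha_{ij}=\theta_{ij}$, $\beta_{ij}=-\theta_{ij}$ for arbitrary $\theta=[\theta_{ij}]_{(i,j)\in\cE}$, and keep the same initialization $\alpha^0=\beta^0=\mathbf{0}$, $y_{ij}^0=(\gamma_i x_i^0+\gamma_j x_j^0)/(\gamma_i+\gamma_j)$. Substituting $A_i^\top A_i=d_i I_n$ and $\sum_i\gamma_i B_i^\top B_i=\diag([\gamma_i+\gamma_j]_{(i,j)\in\cE})\otimes I_n$ into \eqref{eq:SPG-key-ineq} then yields the stochastic analog of \eqref{eq:ergodic-II},
\[
\mathbb{E}\Big[F(\bar{\bx}^t)-F^*+\theta^\T(M\otimes I_n)\bar{\bx}^t\Big]\leq \frac{N(\bar{D}^2+2\sigma^2)}{2\sqrt{t}}+\frac{1}{2t}\Big(\sum_{(i,j)\in\cE}\big(\tfrac{1}{\gamma_i}+\tfrac{1}{\gamma_j}\big)\norm{\theta_{ij}}^2+\sum_{i\in\cN}\tfrac{1}{c_i}\norm{x^*-x_i^0}^2\Big).
\]
From here the three ingredients are identical to Theorem~\ref{thm:DPGA-II}: setting $\theta=\mathbf{0}$ gives the suboptimality \emph{upper} bound; convexity of each $\Phi_i$ together with the optimality condition \eqref{eq:opt-conditions-theta_ij} gives the lower bound \eqref{subgradient-II}, in which the dual term is controlled by the minimal-norm construction of Remark~\ref{rem:laplacian}, $\norm{\bar\theta}_Q^2\leq\tfrac{\norm{Q}}{\sigma_{\min}(\Omega)}\sum_{i\in\cN}\kappa_i^2$; combining the two directions produces the first bound in \eqref{coreq:sdpga-bounds}. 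For the consensus bound I would invoke the displayed inequality once more with $\theta=\theta^*+(M\otimes I_n)\bar{\bx}^t/\norm{(M\otimes I_n)\bar{\bx}^t}$, exactly mirroring \eqref{consensus_ineq-II}.

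The main obstacle is that this last substitution makes $\theta$ a \emph{random} vector, since it depends on the random iterate $\bar{\bx}^t$, and the bound above is stated on expectations rather than pathwise. I would resolve this precisely as in the $\sigma>0$ part of Theorem~\ref{thm-ergodic}: the inequality of Lemma~\ref{technical-lemma}, and hence its telescoped form before the final expectation, holds pathwise for \emph{every} fixed dual comparison point, so it remains valid for this data-dependent $\theta$; I then take expectations, using that $\norm{\theta_{ij}}$ is deterministic (a unit-direction scaled by $\rho_{ij}\triangleq2\norm{\theta^*_{ij}}$) and the crude bound $\norm{\theta-\theta^0}^2/2\leq\norm{\theta}^2+\norm{\theta^0}^2$ to majorize the randomized quadratic term by a deterministic constant. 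The key point is that the stochastic cross-terms $(x_i^*-x_i^k)^\top\delta_i^k$ appearing in the telescoped bound involve only the \emph{deterministic} primal comparison point $x_i^*$, not $\theta$, so their conditional mean is still zero and the martingale argument of Theorem~\ref{lem:ergodic} is undisturbed by the randomness of $\theta$. Isolating the consensus-violation term after these expectations then yields the second bound in \eqref{coreq:sdpga-bounds}; the constants $Q$, $\sigma_{\min}(\Omega)$ and the subgradient bounds $\kappa_i$ are identical to Theorem~\ref{thm:DPGA-II}.
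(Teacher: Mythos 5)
Your proposal is correct and takes essentially the same route as the paper: the paper obtains Corollary~\ref{cor:SDPGA-II} precisely by rerunning the proof of Theorem~\ref{thm:DPGA-II} with each appeal to the deterministic bound \eqref{eq:PG-key-ineq} replaced by the stochastic bound \eqref{eq:SPG-key-ineq} of Theorem~\ref{lem:ergodic} under the stepsize rule $c_i^k=(L_i+\gamma_i d_i+1+\sqrt{k})^{-1}$. Your explicit justification for substituting the data-dependent $\theta$ --- that the inequality of Lemma~\ref{technical-lemma} holds pathwise for every dual comparison point, that the noise cross-terms $(x_i^*-x_i^k)^\top\delta_i^k$ involve only the deterministic primal point $x^*$ so the martingale argument survives, and that the random $\theta$ has deterministic norm --- is exactly the device the paper uses implicitly in the $\sigma>0$ part of Theorem~\ref{thm-ergodic}, so your write-up fills in a detail the paper leaves unstated rather than deviating from its argument.
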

Note that even if $D=\infty$, one can still achieve $\cO(1/\sqrt{t})$ rate in case $D^*(\bx^0)<\infty$ by using constant stepsize. Indeed, as in Corollary~\ref{cor:constant-step}, for any $t\geq 1$, let $\{c_i^k\}_{0\leq k\leq t}$ be chosen such that $\frac{1}{c_i^k}=\frac{1}{c_i}+\sqrt{t}$ and $c_i=(L_i + \gamma_id_i+1)^{-1}$ for $i\in\cN$, \eqref{coreq:sdpga-bounds} holds for $\bar{D}=D^*(\bx^0)$.
\subsection{Adaptive step-size strategy}
\label{sec:adaptive-step}
One important property of DPGA methods is their ability to adopt an adaptive step-size sequence for each node. Note $L_i$, Lipschitz constants of $\grad f_i$, may not be known in advance or may be too large for certain nodes -- leading to very small steps since $c_i=\cO(L_i^{-1})$, i.e., $c_i\leq(L_i+\gamma_i d_i)^{-1}$ for DPGA and $c_i\leq(L_i+\gamma_i \norm{\omega_i}^2)^{-1}$ for DPGA-W -- see Fig.~\ref{alg:DPGA-II} and Fig.~\ref{alg:DPGA-I}. On the other hand, it is elementary to check that all the proofs given above still go through if node $i\in\cN$ uses the step size $c_i^k\leq(L_i^k+\gamma_i d_i)^{-1}$ for DPGA and $c_i^k\leq(L_i^k+\gamma_i \norm{\omega_i}^2)^{-1}$ for DPGA-W at the $k$-th iteration such that the following inequality holds: \vspace*{-1mm}
{\small
\begin{align}
\label{eq:adaptive-check}
f_i(x_i^{k+1})\leq f_i(x_i^{k})+\fprod{\grad f_i(x_i^{k}), \Delta_i^k}+\frac{L_i^k}{2}\norm{\Delta_i^k}^2,
\end{align}
}%
where $\Delta_i^k\triangleq x_i^{k+1}-x_i^{k}$ and $x_i^{k+1}$ is computed using $c_i^k$ instead of $c_i$. Clearly, $L_i^k\leq L_i$. Since this condition can be checked locally, one can possibly take longer steps compared to $c_i$ and still has a convergence guarantee. In contrast, distributed algorithms that use constant step size $c>0$ for all nodes, e.g., PG-EXTRA~\cite{shi2015proximal}, cannot take advantage of this trick. We adopted the following rule in our numerical tests: let $\upsilon>1$, for $k\geq 1$ we set $L_i^{k}=L_i^{k-1} \upsilon^{\ell_k-1}$ where $\ell_k\geq 0$ is the smallest integer such that \eqref{eq:adaptive-check} holds, and $L_i^{0}=L_i$ for $i\in\cN$. {Note that \eqref{eq:adaptive-check} is usually called the descent lemma. In proximal gradient methods and ADMM, it is now a common practice to perform backtracking on $L_i^k$ such that \eqref{eq:adaptive-check} holds. For more details on adaptive step size using backtracking, we refer the interested readers to \cite{Beck09} and \cite{Goldfarb-Scheinberg-fastlinesearch2011}.}
\vspace*{-3mm}
\section{Numerical results}
\label{sec:numerical}
We compared DPGA with PG-EXTRA, distributed ADMM and 
its variant proposed in~\cite{shi2015proximal}, \cite{makhdoumi2014broadcast} and~\cite{icml2015_aybat15}, respectively, on the \emph{sparse group LASSO} problem with Huber loss:
{
\begin{equation}
\min_{x \in \reals^n} \sum_{i\in\cN}\left[ \beta_1 \norm{x}_1 + \beta_2 \norm{x}_{G_i}+h_{\delta}\left( A_i x- b_i \right)\right], \label{eq:problem} \vspace{-0.15cm}
\end{equation}
}%
where $\beta_1, \beta_2>0$, for each $i\in\cN$, $A_i\in\reals^{m_i\times n}$, $b_i\in\reals^{m_i}$, and $\norm{x}_{G_i} \triangleq \sum_{k=1}^{K}\norm{x_{g_i(k)}}_2$ denotes the group norm with respect to the partition $G_i$ of $[1,n]\triangleq\{1,\cdots,n\}$, i.e., $G_i=\{g_i(k)\}_{k=1}^{K}$ such that $\bigcup_{k=1}^{K}g_i(k)=[1,n]$, and $g_i(j)\cap g_i(k) =\emptyset$ for all $j \neq k$; and $h_{\delta}$ denotes the Huber loss function, i.e., for any $m\geq 1$ let $h_{\delta}:\reals^m\rightarrow\reals$ such that $h_{\delta}(y) \triangleq \max\{z^\top y-\tfrac{1}{2}\norm{z}^2:\ \norm{z}_\infty\leq \delta\}$. In this case, $f_i(x)\triangleq h_{\delta}\left( A_i x- b_i \right)$ and $\xi_i(x)\triangleq\beta_1\norm{x}_1+\beta_2\norm{x}_{G_i}$.

Next, we briefly describe the competitive algorithms: PG-EXTRA~\cite{shi2015proximal}, the distributed ADMM algorithm in~\cite{makhdoumi2014broadcast}; and a more efficient variant of the ADMM that exploits the problem structure in \eqref{eq:problem}.
Recall that $\Omega\in\reals^{N\times N}$ denotes the Laplacian of the graph $\cG=(\cN,\cE)$.
\subsection{Distributed ADMM Algorithms and PG-EXTRA}
As discussed in Section~\ref{sec:dpga1}, \eqref{eq:dist_opt} can be equivalently written as in \eqref{eq:DPGA-I formulation}.
Makhdoumi \& Ozdaglar~\cite{makhdoumi2014broadcast} establish that when an ADMM algorithm with penalty parameter $\gamma>0$ is implemented on \eqref{eq:DPGA-I formulation}, 
the subproblems can be simplified as shown in Figure~\ref{fig:admm}.
It is shown in~\cite{makhdoumi2014broadcast} that suboptimality and consensus violation converge to $0$ with a rate $\cO(1/k)$, and in each iteration every node communicates $2n$ scalars, i.e., $x_i\in\reals^n$ and $p_i+s_i\in\reals^n$. Moreover, each node stores $3n$ scalars at each iteration, i.e., $x_i,s_i,p_i\in\reals^n$. Note that ADMM in Fig.~\ref{fig:admm} is a special case of DPGA-W in Fig.~\ref{alg:DPGA-I}. Indeed, DPGA-W iterations reduces to ADMM when $\gamma_i=\gamma$ for all $i\in\cN$ for some $\gamma>0$, and when we set $\xi_i\gets\Phi_i$ and $f_i\gets 0$ in DPGA-W, i.e., when we treat $\Phi_i=\xi_i+f_i$ as the non-smooth component in DPGA-W.

\begin{figure}[htbp]
\centering
\framebox{\parbox{\columnwidth-8pt}{
{\small
\textbf{Algorithm ADMM} ( $\gamma,\bx^{0}$ ) \\[1.5mm]
Initialization: $c_i=(\gamma \norm{\omega_i}^2)^{-1}$,\ \ $p_i^{0} = \mathbf{0},\ \ i\in\cN$\\[2mm]
Step $k$: ($k \geq 0$) For $i\in\cN$ compute\\
\text{ } 1. $x_i^{k+1}=  \prox{c_i(\xi_i + f_i)}  \left( x_i^{k} - c_i \sum_{j \in \cN_i\cup\{i\}}  W_{ij} \left(p_j^{k}+s_j^{k}\right)  \right).$ \\
\text{ } 2. $s_i^{k+1}=\gamma \sum_{j\in\cN_i\cup\{i\}}W_{ij}x_j^{k+1}/(d_i+1)$\\
\text{ } 3. $p_i^{k+1}=p_i^{k}+s_i^{k+1}$
 }}}
\caption{\small ADMM algorithm}
\vspace*{-0.25cm}
\label{fig:admm}
\end{figure}
From now on, we refer to this algorithm that directly works with $\Phi_i=\xi_i + f_i$ as \admm -- see~Fig.~\ref{fig:admm}. Computing  $\prox{\Phi_i}$ for each $i \in \cN$ is the computational bottleneck in each iteration of \admm. Note that computing $\prox{\Phi_i}$ for \eqref{eq:problem} is almost as \emph{hard} as solving the problem. To deal with this issue, Aybat et al.~\cite{icml2015_aybat15} considered the following reformulation: \vspace*{-1mm}
{\small
\begin{align*}
\min_{\substack{x_i,y_i \in \mathbb{R}^n,\\ z_i,\tilde{z}_i\in\cZ_i}}
\left\{\sum_{i\in\cN}\xi_i(x_i)+f_i(y_i):\
\begin{array}{ll}
W_{ij}x_j=z_{ij},\quad &W_{ij}y_j=\tilde{z}_{ij},\quad j\in\cN_i\cup\{i\},\ i\in\cN \\
x_i=q_i,\ &y_i=q_i,\quad i\in\cN,
\end{array}
\right\}
\end{align*}
}%
where $\cZ_i\triangleq\{z_i=[z_{ij}]_{j\in\cN_i\cup\{i\}}:\ \sum_{j\in\cN_i\cup\{i\}}z_{ij}=0\}$, and proposed a split ADMM algorithm (SADMM). 
Steps of \sadmm~can be derived by minimizing the augmented Lagrangian alternatingly in $(\bx,\by)$, and in $(\bz,\mathbf{\tilde{z}},\bq)$ while fixing the other. As in~\cite{makhdoumi2014broadcast}, computing $(\bz,\mathbf{\tilde{z}},\bq)$ can be avoided by exploiting the structure of optimality conditions. Convergence of SADMM with $\cO(1/k)$ rate follows immediately from the results on the convergence of ADMM~\cite{Yuan11_2J}. In each iteration of SADMM, every node communicates $4n$ scalars, i.e., $x_i\in\reals^n$, $y_i\in\reals^n$, $p_i+ s_i\in\reals^n$ and $\tilde{p}_i+\tilde{s}_i\in\reals^n$. Moreover, each node stores $7n$ scalars, i.e., $x_i,y_i,s_i,\tilde{s}_i,p_i,\tilde{p}_i,r_i\in\reals^n$.

Given two doubly stochastic, symmetric mixing matrices, $W=[W_{ij}]$, $\tilde{W}=[\tilde{W}_{ij}]\in\reals^{|\cN|\times |\cN|}$ satisfying Assumption~1 in~\cite{shi2015proximal}, Shi et al. show that PG-EXTRA, displayed in Fig.~\ref{fig:pgextra}, can solve \eqref{eq:central_opt} with $\Phi_i=\xi_i+ f_i$ as in \eqref{eq:F_i} ($\xi_i$ and $f_i$ convex and $\grad f_i$ is Lipchitz continuous with constant $L_i$) in a distributed fashion with $\cO(1/k)$ rates on sub-optimality and consensus violation in terms of \emph{squared residuals} of KKT violation and consensus violation, respectively.
In our experiments, the mixing matrices are chosen as $W \triangleq I - \frac{\Omega}{d_{\max}+1}$ and $\tilde{W} \triangleq \frac{I + W}{2} = I - \frac{\Omega}{2(d_{\max+1})}$ (see Section 2.3 in~\cite{shi2015extra}). According to \cite{shi2015proximal}, $\bx^*=[x_i^*]_{i\in\cN}$ is an optimal solution to \eqref{eq:dist_opt} with $\Phi_i=\xi_i+ f_i$ as in \eqref{eq:F_i} if and only if there exist $\bq^*\in\reals^{n|\cN|}$ and $\bg^*=[g_i^*]_{i\in\cN}\in\reals^{n|\cN|}$ such that $\bq^*=(U\otimes I_n)\bp$ for some $\bp\in\reals^{n|\cN|}$ and $g_i^*\in\partial \xi_i(x_i^*)$ satisfying $(U\otimes I_n)\bx^*=\mathbf{0}$ and $(U\otimes I_n)\bq^*+c (\bg^*+\grad f(\bx^*))=\mathbf{0}$, where $c>0$ is the given step size parameter, $U\triangleq(\tilde{W}-W)^{1/2}$, and $\grad f(\bx)\triangleq[\grad f_i(x_i)]_{i\in\cN}\in\reals^{n|\cN|}$ for any $\bx=[x_i]_{i\in\cN}$. Assumption~1 in~\cite{shi2015proximal} implies that null space of $U$ only contains $\mathbf{1}\in\reals^{|\cN|}$; therefore, $(U\otimes I_n)\bx=\mathbf{0}$ implies $x_1=\ldots=x_N$.

Let $\{\bx^k\}$ be the PG-EXTRA iterate sequence generated as in Fig.~\ref{fig:pgextra}, and sequence $\{\bq^k\}$ be defined as $\bq^k=\sum_{t=0}^k(U\otimes I_n)\bx^t$. According to Theorem 1 and Theorem~2 in \cite{shi2015proximal}, when the step size $c\in\Big(0 , \frac{2\lambda_{\min}(\tilde{W})}{L_{\max}} \Big)$, where $L_{\max}=\max_{i\in\cN}L_i$, then $\{\bx^k\}$ satisfies \vspace*{-2mm}
{\small
\begin{align}
\label{eq:pgextra-rate}
\frac{1}{t}\sum_{k=0}^t\norm{(U\otimes I_n)\bq^k+c(\grad f(\bx^k)+\bg^{k+1})}_{\tilde{W}}^2=\cO\left(\frac{1}{t}\right),\quad \frac{1}{t}\sum_{k=0}^t\norm{(U\otimes I_n)\bx^k}^2=\cO\left(\frac{1}{t}\right), \vspace{-4mm}
\end{align}
}
where $g_i^{k+1}\in\partial \xi_i(x_i^{k+1})$. As also pointed out in the introduction, we consider this rate result as $\cO(1/\sqrt{t})$ because \eqref{eq:pgextra-rate} can only guarantee $\norm{(U\otimes I_n)\bar{\bx}^t}=\cO(1/\sqrt{t})$, where $\bar{\bx}^t\triangleq\sum_{k=1}^t\bx^k/t$. On the other hand, according to Theorems~\ref{thm:DPGA-II} and \ref{thm:DPGA-I}, DPGA and DPGA-W iterate sequences satisfy  $(\sum_{(ij)\in\cE}\norm{\bar{x}_i^t-\bar{x}_i^t}^2)^{1/2}=\cO(1/t)$ and $\norm{(W\otimes I_n)\bar{\bx}^t}=\cO(1/t)$, respectively.

Adopting mixing matrices $(W,\tilde{W})$ to be able to set the stepsize $c>0$ independent of the global topology of $\cG=(\cN,\cE)$ may still require certain parameters, determined by the global topology of $\cG$, to be in the common knowledge of all nodes $\cN$. In particular, $W = I - \frac{\Omega}{2d_{\max}}\succeq\mathbf{0}$ and $\tilde{W}=\frac{I+W}{2}\succeq\frac{1}{2}I$; hence, $c$ can be chosen $c\in(0,\frac{1}{L_{\max}})$, which is independent of the global topology, and only depends on $L_{\max}$; however, all the nodes need to know $d_{\max}$, and $L_{\max}$ which can be computed using some max-consensus algorithm. This assumption may not be attainable for very large scale fully distributed networks, and computing parameters such as $L_{\max}$ and $d_{\max}$ may violate the privacy requirements of the nodes. Also note that since the stepsize $c>0$ is the same for all nodes, PG-EXTRA cannot take advantage of the adaptive step size strategy discussed in Section~\ref{sec:adaptive-step}.
\begin{figure}[htpb]
\centering
\framebox{\parbox{\columnwidth-8pt}{\small
\textbf{Algorithm PG-EXTRA} $\left( c, \bx^0, W, \widetilde{W}\right)$ \\
Step $0$: all nodes $i\in\cN$ do\\
\text{ } 1. $x_{i}^{1/2} = \sum_{j\in\cN_i\cup\{i\}} W_{ij} x_{j}^0 - c \nabla f_i( x_{i}^0 ) $ \\
\text{ } 2. $x_{i}^{1} = \prox{c \xi_i}( x_{i}^{1/2} ) $ \\[1mm]
Step $k$: ($k \geq 1$) all nodes $i\in\cN$ do \\
\text{ } 1. $x_{i}^{k+1+1/2} = \sum_{j\in\cN_i\cup\{i\}} W_{ij} x_{j}^{k+1} - \sum_{j\in\cN_i\cup\{i\}} \widetilde{W}_{ij} x_{j}^{k}+ x_{i}^{k+1/2} - c [ \nabla f_i( x_{i}^{k+1} ) - \nabla f_i( x_{i}^k ) ]$ \\
\text{ } 2. $x_{i}^{k+2} = \prox{c \xi_i}( x_{i}^{k+1+1/2} ) $
}}
\caption{\footnotesize Proximal Gradient Exact First-order Algorithm (PG-EXTRA) }
\vspace*{-1cm}
\label{fig:pgextra}
\end{figure}
\subsection{Implementation details and numerical results}
\label{sec:experimental-setting}
In Lemma~\ref{lem:prox}, we show that $\prox{\xi_i}$ can be computed in closed form. On the other hand, when \admm, and \sadmm~are implemented on \eqref{eq:problem}, one needs to compute $\prox{\Phi_i}$ and $\prox{f_i}$, respectively; and these proximal operations do not assume closed form solutions. To be fair, we computed them using an efficient interior point solver MOSEK~(ver. 7.1.0.12).
\begin{lemma}
\label{lem:prox}
Let $\xi(x) = \beta_1 \| x \|_1 + \beta_2 \| x \|_G$. For $t>0$ and $\bar{x}\in\reals^{n}$, $x^p=\prox{t\xi}(\bar{x})$ is given by $ x^{p}_{g(k)} = \eta'_{g(k)} \max \left\{ 1 - \frac{t\beta_2}{\norm{\eta_{g(k)}}_2},~ 0  \right\}$ for $1\leq k\leq K$, where $\eta' = \sgn(\bar{x}) \odot \max \{ |\bar{x}| - t\beta_1, 0 \}$.
\end{lemma}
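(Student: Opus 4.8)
The plan is to identify $x^p=\prox{t\xi}(\bar x)$ through its first-order optimality condition and verify the claimed expression directly. Since $\xi$ is finite-valued and convex, $x^p$ is the unique minimizer of $t\xi(y)+\tfrac12\norm{y-\bar x}^2$, equivalently characterized by $\bar x - x^p \in t\,\partial\xi(x^p)$. The first observation I would make is that this problem \emph{separates across the groups} $\{g(k)\}_{k=1}^K$: because $\norm{y}_1=\sum_k\norm{y_{g(k)}}_1$, $\norm{y}_G=\sum_k\norm{y_{g(k)}}_2$, and $\tfrac12\norm{y-\bar x}^2=\sum_k\tfrac12\norm{y_{g(k)}-\bar x_{g(k)}}^2$, the objective is a sum of terms each depending only on the block $y_{g(k)}$. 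Hence it suffices to treat a single group: writing $z\triangleq\bar x_{g(k)}$ and $w\triangleq\eta'_{g(k)}=\sgn(z)\odot\max\{|z|-t\beta_1,0\}$, I must show $\prox{t(\beta_1\norm{\cdot}_1+\beta_2\norm{\cdot}_2)}(z)=w\,\max\{1-t\beta_2/\norm{w}_2,\,0\}$.

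On a single group I would exploit the soft-thresholding identity. By definition $w=\prox{t\beta_1\norm{\cdot}_1}(z)$, so its own optimality condition reads $z-w\in t\beta_1\,\partial\norm{w}_1$; concretely, setting $s\triangleq(z-w)/(t\beta_1)$, one has $s_i=\sgn(z_i)=\sgn(w_i)$ whenever $w_i\neq 0$ and $s_i=z_i/(t\beta_1)\in[-1,1]$ whenever $w_i=0$. Using the sum rule $\partial(\beta_1\norm{\cdot}_1+\beta_2\norm{\cdot}_2)=\beta_1\,\partial\norm{\cdot}_1+\beta_2\,\partial\norm{\cdot}_2$ (valid since both norms are everywhere finite, so no constraint qualification is needed), the target condition becomes $z-x^p\in t\beta_1\,\partial\norm{x^p}_1+t\beta_2\,\partial\norm{x^p}_2$, and the strategy is to exhibit explicit subgradients realizing this membership.

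I would then split into two cases. If $\norm{w}_2\le t\beta_2$, the claim asserts $x^p=0$, and I verify $z\in t\beta_1\,\partial\norm{0}_1+t\beta_2\,\partial\norm{0}_2$ by writing $z=t\beta_1 s+t\beta_2\,(w/(t\beta_2))$, noting $\norm{s}_\infty\le 1$ and $\norm{w/(t\beta_2)}_2\le 1$. If $\norm{w}_2>t\beta_2$, then $x^p=(1-t\beta_2/\norm{w}_2)\,w$ is a strictly positive multiple of $w$, so $\sgn(x^p)=\sgn(w)$ and $\partial\norm{x^p}_2=\{x^p/\norm{x^p}_2\}=\{w/\norm{w}_2\}$; a short computation gives $z-x^p=t\beta_1 s+t\beta_2\,w/\norm{w}_2$, where the same $s$ lies in $\partial\norm{x^p}_1$ since its entries match $\sgn(x^p)$ on the support of $x^p$ and are bounded by one off the support. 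The main obstacle, and essentially the only delicate point, is the bookkeeping on the coordinates where $w_i=0$: I must confirm that the corresponding entries of $s$ genuinely lie in $[-1,1]=\partial|0|$ and that the group $\ell_2$ subgradient contributes nothing there, so the two subdifferentials combine correctly into $\partial\xi(x^p)$. Once both cases are checked, uniqueness of the proximal minimizer identifies the claimed vector as $\prox{t\xi}(\bar x)$, and reassembling the groups completes the proof.
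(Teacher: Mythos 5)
Your proof is correct, but it is worth noting that the paper itself does not prove this lemma at all: its entire ``proof'' is the sentence ``This result is shown in \cite{icml2015_aybat15},'' i.e., a citation to prior work. What you have written is therefore a genuinely self-contained verification that the paper omits. Your route is the standard one for the sparse group LASSO prox and it checks out: the group-wise separability of $\beta_1\norm{\cdot}_1+\beta_2\norm{\cdot}_G$ together with the quadratic term reduces everything to a single block; the sum rule $\partial(\beta_1\norm{\cdot}_1+\beta_2\norm{\cdot}_2)=\beta_1\partial\norm{\cdot}_1+\beta_2\partial\norm{\cdot}_2$ needs no constraint qualification since both norms are finite everywhere; and your two-case exhibition of explicit subgradients (using the soft-thresholding residual $s=(z-w)/(t\beta_1)$, which satisfies $s_i=\sgn(w_i)$ on the support of $w$ and $|s_i|\leq 1$ off it) is exactly the bookkeeping needed, with strong convexity of the prox objective supplying uniqueness. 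Two small remarks. First, your argument implicitly corrects a typo in the statement: the denominator should read $\norm{\eta'_{g(k)}}_2$, not $\norm{\eta_{g(k)}}_2$ (the vector $\eta$ is never defined); your identification $w=\eta'_{g(k)}$ is the intended reading. Second, in the degenerate sub-case $w=0$ (so $\norm{w}_2=0\leq t\beta_2$), your Case 1 decomposition $z=t\beta_1 s+t\beta_2\,(w/(t\beta_2))$ still works with the zero vector as the $\ell_2$-subgradient, but it deserves a half-sentence so the division by $\norm{w}_2$ in the formula is not read literally there. What your approach buys over the paper's is obvious: a reader can check the lemma without chasing the reference; what the citation buys the paper is brevity, since the identity is folklore in the sparse group LASSO literature.
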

\begin{proof}
This result is shown in \cite{icml2015_aybat15}.
\end{proof}
\begin{table*}[t!]
\centering
\caption{\small Comparison of \textbf{DPGA, PG-EXTRA}, 
\admm, and \sadmm~ (Termination time T=1800 sec)}
\resizebox{0.95\textwidth}{!}{
\label{tab:1}
\begin{tabular}{cl|cc|cc|cc|cc|c}
\toprule
Size  & Alg. & \multicolumn{2}{c|}{Rel. Suboptimality}  & \multicolumn{2}{c|}{Consensus Violation~(V)} & \multicolumn{2}{c|}{Walltime (sec.)} & \multicolumn{2}{c|}{\# of communication rounds} & Solved? \\
\midrule
  & & Case 1 & Case 2 & Case 1 & Case 2 & Case 1 & Case 2 & Case 1 & Case 2& \\ \midrule
&SDPT3 & 0  & 0  & 0  & 0  & 26  & 74  & N/A  & N/A& \\
&FISTA &1E-3&N/A&0&N/A&10&N/A&2173&N/A& \\
$N=5$ & DPGA~(CS) & 1E-3, 1E-3 & 1E-3, 1E-3 & 9E-5, 2E-5 & 9E-5, 1E-5 & 27, 28 & 28, 30 & 7596, 7597 & 7829, 7804 & yes\\
$n_g=100$
& DPGA~(AS) & 1E-3, 1E-3 & 1E-3, 1E-3 & 4E-5, 4E-5 & 4E-5, 3E-5 & \textbf{11, 11} & \textbf{11, 11} & \textbf{2926, 2906} & \textbf{3021, 2976} & yes\\
& PG-EXTRA & 1E-3, 1E-3 & 1E-3, 1E-3 & 6E-8, 2E-8 & 5E-8, 2E-8 & 43, 46 & 46, 49 & 25246, 25244 & 25948, 25946 & yes\\
& ADMM & 4E-5, 3E-5 & 3E-5, 3E-5 & 1E-4, 1E-4 & 1E-4, 1E-4 & 996, 712 & 987, 709 & 618, 442 & 638, 458 & yes\\
& SADMM & 1E-4, 2E-4 & 1E-4, 2E-4 & 1E-4, 1E-4 & 1E-4, 1E-4 & 655, 713 & 668, 741 & 1984, 2188 & 2020, 2268 & yes\\
\midrule
&SDPT3 & 0  & 0 & 0  & 0  & 26  & 82  & N/A  & N/A  & \\
&FISTA &1E-3&N/A&0&N/A&10&N/A&2173&N/A& \\
 $N=10$ & DPGA~(CS) & 4E-4, 1E-3 & 4E-4, 1E-3 & 1E-4, 2E-5 & 1E-4, 2E-5 & 90, 84 & 94, 88 & 15479, 12281 & 15717, 12622 & yes\\
$n_g=100$
& DPGA~(AS) & 1E-3, 1E-3 & 1E-3, 1E-3 & 3E-5, 3E-5 & 3E-5, 3E-5 & \textbf{30, 34} & \textbf{32, 34} & \textbf{4834, 4790} & \textbf{5015, 4926} & yes \\
& PG-EXTRA & 1E-3, 1E-3 & 1E-3, 1E-3 & 8E-8, 1E-8 & 6E-8, 9E-9 & 123, 150 & 131, 158 & 43346, 43340 & 44628, 44624 & yes\\
& ADMM & 9E-3, 2E-2 & 8E-3, 2E-2 & 6E-4, 2E-4 & 6E-4, 2E-4 & T, T & T, T & 696, 690 & 734, 726 & no\\
& SADMM & 2E-4, 7E-3 & 2E-4, 7E-3 & 1E-4, 7E-4 & 1E-4, 7E-4 & T, T & T, T & 3368, 3460 & 3404, 3452 & no\\
\midrule
& SDPT3 & 0  & 0  & 0  & 0  & 691  & 1381  & N/A  & N/A  \\
&FISTA&1E-3&N/A&0&N/A&253&N/A&8663&N/A & \\
$N=5$ & DPGA~(CS) & 1E-3, 1E-3 & 1E-3, 1E-3 & 7E-5, 8E-6 & 7E-5, 9E-6 & 131, 137 & 136, 141 & 11274, 11336 & 11419, 11482 & yes\\
$n_g=300$
& DPGA~(AS) & 1E-3, 1E-3 & 1E-3, 1E-3 & 3E-5, 2E-5 & 3E-5, 3E-5 & \textbf{65, 66} & \textbf{62, 63} & \textbf{4268, 4242} & \textbf{4064, 4028} & yes \\
& PG-EXTRA & 1E-3, 1E-3 & 1E-3, 1E-3 & 5E-8, 2E-8 & 5E-8, 2E-8 & 181, 190 & 186, 197 & 32644, 32642 & 33018, 33016 & yes \\
& ADMM & 4E-2, 9E-4 & 4E-2, 8E-4 & 4E-3, 1E-3 & 5E-3, 1E-3 & T, T & T, T & 230, 244 & 228, 242 & no\\
& SADMM & 5E-3, 2E-2 & 5E-3, 2E-2 & 2E-3, 3E-3 & 2E-3, 3E-3 & T, T & T, T & 1080, 1108 & 1080, 1100 & no\\
\midrule
& SDPT3 & 0  & 0  & 0  & 0  & 706  & 1463  & N/A  & N/A  & \\
&FISTA &1E-3&N/A&0&N/A&253&N/A&8663&N/A & \\
$N=10$ & DPGA~(CS) & 9E-4, 1E-3 & 9E-4, 1E-3 & 7E-5, 7E-6 & 7E-5, 7E-6 & 246, 286 & 255, 294 & 18874, 18673 & 19124, 18831 & yes\\
$n_g=300$
& DPGA~(AS) & 1E-3, 1E-3 & 1E-3, 1E-3 & 2E-5, 2E-5 & 2E-5, 2E-5 & \textbf{113, 123} & \textbf{111, 122} & \textbf{7128, 7066} & \textbf{6716, 6641} & yes\\
& PG-EXTRA & 1E-3, 1E-3 & 1E-3, 1E-3 & 5E-8, 8E-9 & 6E-8, 8E-9 & 351, 424 & 361, 435 & 56492, 56488 & 56934, 56930 & yes\\
& ADMM & 5E-2, 7E-2 & 5E-2, 7E-2 & 6E-3, 6E-3 & 6E-3, 6E-3 & T, T & T, T & 236, 246 & 232, 242 & no\\
& SADMM & 5E-2, 6E-1 & 5E-2, 7E-1 & 4E-3, 2E-2 & 4E-3, 2E-2 & T, T & T, T & 1020, 1060 & 1020, 1064 & no \\
\bottomrule
\end{tabular}
}
\end{table*}
In our experiments, the network was either a \emph{star tree} or a \emph{clique} with either $5$ or $10$ nodes. The remaining problem parameters defining $\{\xi_i,f_i\}_{i\in\cN}$ were set as follows. We set $\beta_1=\beta_2=\frac{1}{N}$, $\delta=1$, and $K=10$. Let $n=K n_g$ for $n_g\in\{100,300\}$, i.e., $n\in\{1000, 3000\}$. We generated partitions $\{G_i\}_{i\in\cN}$ in two different ways. For test problems in \textbf{CASE 1}, we created a single partition $G=\{g(k)\}_{k=1}^K$ by generating $K$ groups uniformly at random such that $|g(k)| = n_g$ for all $k$; and set $G_i=G$ for all $i\in\cN$, i.e., $\xi_i(x)\triangleq\beta_1\norm{x}_1+\beta_2\norm{x}_G$ for all $i\in\cN$. For the test problems in \textbf{CASE 2}, we created a different partition $G_i$ for each node $i$, in the same manner as in \textbf{Case 1}. For all $i\in\cN$, $m_i = \frac{n}{2N}$, $\reals^{m_i}\ni b_i = A_i \bar{x}$ for $\bar{x}_j = (-1)^j e^{-(j-1)/n_g}$ for $j\in[1,n]$, and $A_i\in\reals^{m_i\times n}$ is set as $A_i=0.5^{\pi_i}\bar{A}_i$, where the elements of $\bar{A}_i\in\reals^{m_i\times n}$ are i.i.d. with standard Gaussian, and $\{\pi_i\}_{i\in\cN}$ are i.i.d. Bernoulli random variables with success probability $\tfrac{1}{2}$. Our choice of $\{A_i\}_{i\in\cN}$ 
leads to a significant deviation among $\{L_i\}$, i.e., $\max_{i\in\cN}L_i/\min_{i\in\cN}L_i\approx 4$ since $L_i=\norm{A_i}^2$. This type of setting is expected to \emph{adversely} affect constant step algorithms, e.g., $c=\cO(1/L_{\max})$ for PG-EXTRA. For all the algorithms, we initialize the iterate sequence from the origin. For ADMM methods, the penalty parameter was chosen specifically for each problem setup by searching for the best penalty over a line segment where the total number of ADMM iterations to terminate exhibits a convex behavior -- similar to Section~4.2.1 in \cite{Aybat15_2J}.

We solved the distributed optimization problem \eqref{eq:dist_opt} using \textbf{DPGA, PG-EXTRA}, \admm, and \sadmm~for both cases, on both star trees, and cliques, and for $N\in\{5,10\}$ and $n_g\in\{100,300\}$.  For each problem setting, we randomly generated 5 instances.
Note that for \textbf{Case 1}, $\sum_{i\in\cN}\xi_i(x)=\norm{x}_1+\norm{x}_G$ and its prox map can be computed efficiently, while for \textbf{Case 2}, $\sum_{i\in\cN}\xi_i(x)$ does not assume a simple prox map. Therefore, for \textbf{Case 1} we were also able to use FISTA~\cite{Beck09,nesterov2013gradient,Tseng08} to solve the central problem \eqref{eq:problem} by exploiting the result of Lemma~\ref{lem:prox}. All the algorithms are terminated when the relative suboptimality, $|F^{k}-F^*|/|F^*|$, is less than $10^{-3}$, and consensus violation, $\mathrm{V}^{k}$, is less than $10^{-4}$, where $F^{k}$ equals $\sum_{i\in\cN}\Phi_i(x_i^{k})$ for \textbf{DPGA, PG-EXTRA} and \admm, and to $\sum\limits_{i\in\cN}\Phi_i\left(\frac{x_i^{k}+y_i^{k}}{2}\right)$ for \sadmm; $\mathrm{V}^{k}$ equals to $\max_{(ij)\in\cE}\norm{x_i^{k}-x_j^{k}}_2/\sqrt{n}$ for \textbf{DPGA, PG-EXTRA}, and \admm, and to $\max\{\max_{(ij)\in\cE}\norm{x_i^{k}-x_j^{k}}_2,~\max_{i\in\cN}\norm{x_i^{k}-y_i^{k}}_2\}/\sqrt{n}$ for \sadmm. If the stopping criteria are not satisfied in $T=1800$ seconds (30 min.), we terminated the algorithm and report the statistics corresponding to the iterate at the termination.

We solved the \emph{central} problem \eqref{eq:problem} with SDPT3 and FISTA for benchmarking. We run DPGA on the \emph{decentralized} problem both with constant step and adaptive step rules - see Section~\ref{sec:adaptive-step}. In Table~\ref{tab:1}, '(CS)' and '(AS)' stand for
constant step and adaptive step rules, respectively. We used PG-EXTRA, ADMM, and SADMM with suggested parameters. 
For the results separated by comma, the left and right ones are for the star tree and clique, respectively. Table~\ref{tab:1} displays the means over 5 replications for each case. 
Table~\ref{tab:1} shows that DPGA and PG-EXTRA finish the jobs much faster than ADMM and SADMM -- as expected due to not so simple $\prox{\Phi_i}$ and $\prox{f_i}$ operations required for ADMM and SADMM, respectively. PG-EXTRA
runs slower than DPGA, mainly because it uses a stepsize that is the same for all the nodes. Moreover, adaptive step-size strategy worked very well in our tests, and it lead to speedup for DPGA by a factor of at least 2 when compared to constant step-size strategy. It is worth mentioning that run-times reported do not include the effect of communication. However, in real life, transmitting information also takes time. The number of communication rounds per iteration are 1 for DPGA, 2 for PG-EXTRA, 2 for ADMM, and 4 for SADMM - see Table~\ref{tab:str_commu}. Thus, we expect the result to be more in favor of DPGA as the communication time is also taken into consideration when implemented in real networks.
\subsection{Numerical Tests on the Effect of Network Topology and Noisy Gradients}
In this section, we study the effect of network topology on the convergence of DPGA -- we used constant step version, i.e., DPGA (CS). 
In the experiment, we have three types of network topologies, circle, small-world and complete graph. 
 Circle is constructed by forming a cycle connecting all the nodes; the small-world networks are constructed by adding random edges after forming a cycle~\cite{Ling15_1J}. 
Both the problem setting and the stopping criteria are the same with those in CASE 1 of Section~\ref{sec:experimental-setting}.
According to discussion at the end of Section~\ref{sec:dpga-errorbounds}, $\gamma^*=\cO(\sqrt{\frac{|\cN|}{|\cE|~\sigma_{\min}(\Omega)}})$ minimizes the error bounds; hence, the penalty parameter $\gamma$ was chosen as $\sqrt{\frac{c|\cN|}{ |\cE| \min_{i \in \cN} d_i}}$, where $c$ is set to $2.6$. 
This empirical rule has worked fairly well in our tests.

\begin{figure}[htbp]
\centering
	\begin{subfigure}[b]{0.45\textwidth}
        \centering
        \includegraphics[width = \textwidth]{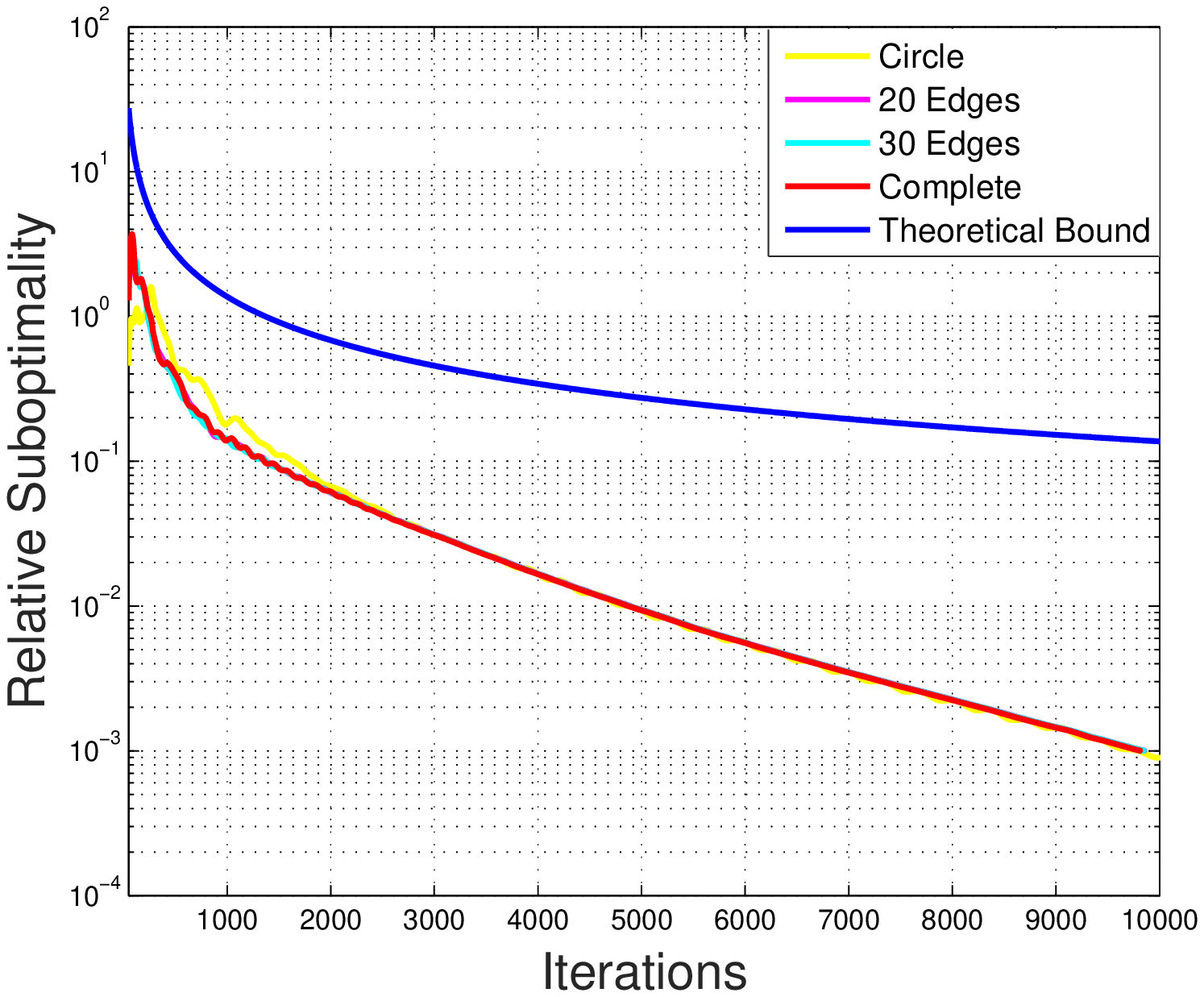}
        \caption{Relative suboptimality, $N=10$}
        \label{fig:relfun_sm}
   	\end{subfigure}%
	\begin{subfigure}[b]{0.45\textwidth}
        \centering
        \includegraphics[width = \textwidth]{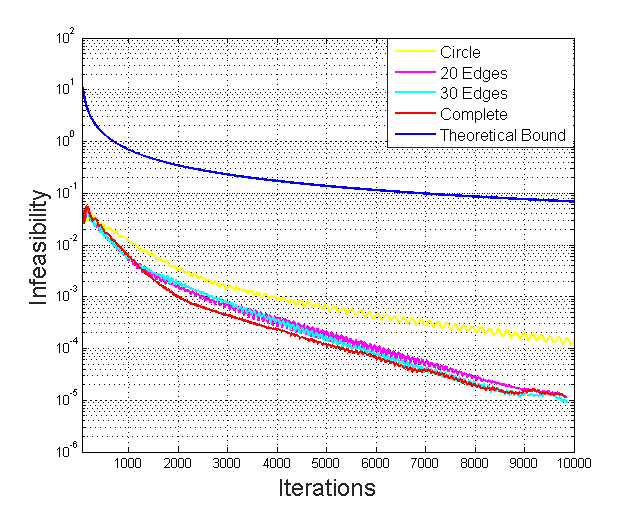}
        \caption{Infeasibility, $N=10$}
        \label{fig:infeas_sm}
	\end{subfigure}
    \begin{subfigure}[b]{0.45\textwidth}
        \centering
        \includegraphics[width = \textwidth]{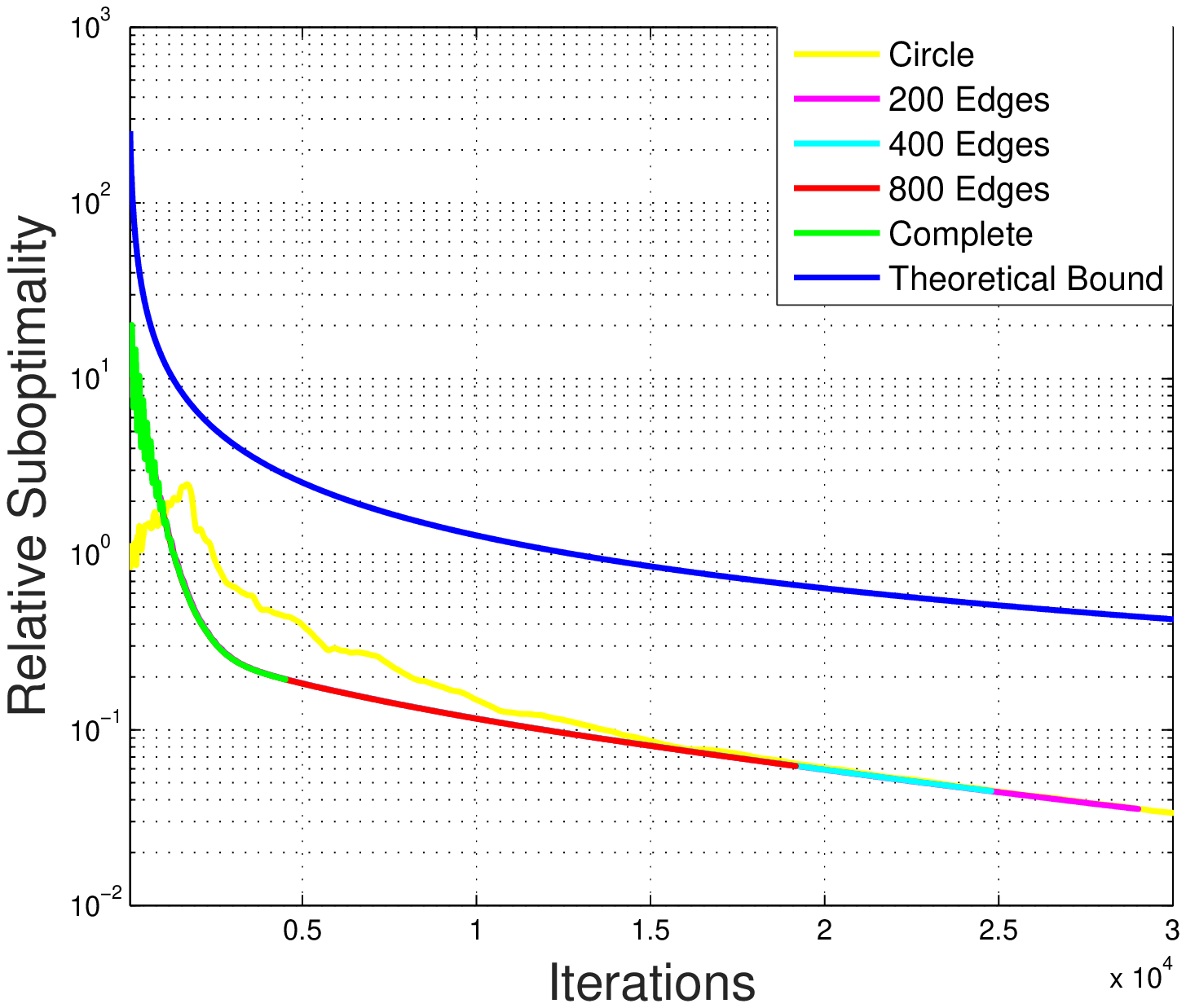}
        \caption{Relative suboptimality, $N=100$}
        \label{fig:relfun_lg}
   	\end{subfigure}%
	\begin{subfigure}[b]{0.45\textwidth}
        \centering
        \includegraphics[width = \textwidth]{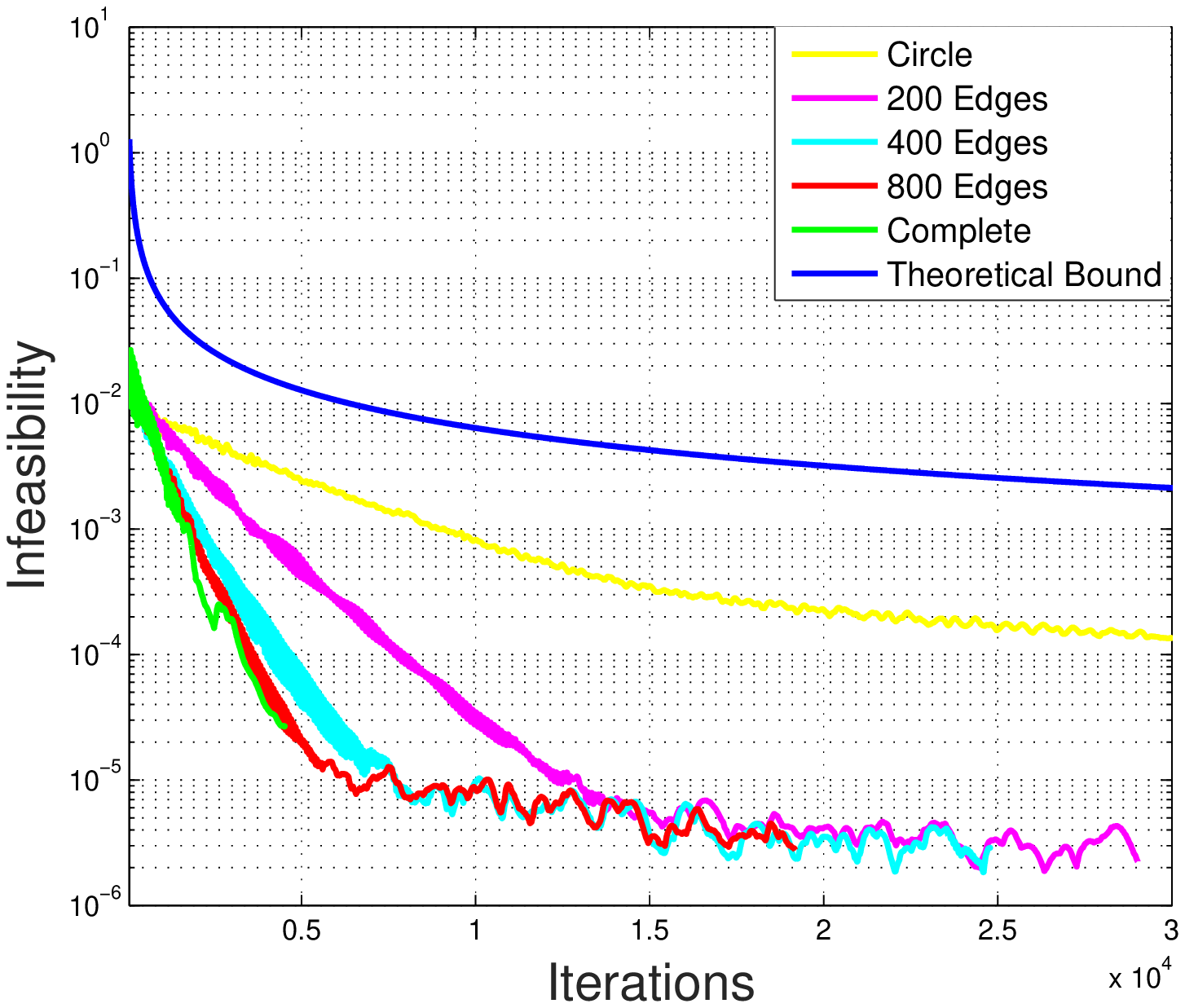}
        \caption{Infeasibility, $N=100$}
        \label{fig:infeas_lg}
	\end{subfigure}
\caption{The effect of increasing connectivity}
\end{figure}
In Fig.~\ref{fig:relfun_sm} and~\ref{fig:infeas_sm} we display the topology effect on convergence of relative optimality and consensus violation when $N=10$; and in Fig.~\ref{fig:relfun_lg} and~\ref{fig:infeas_lg} for networks with $N=100$.
In all these figures, we also plot the theoretical error bounds for the circle network in both Fig~\ref{fig:relfun_sm} and Fig.~\ref{fig:infeas_sm} -- to avoid crowding the figures, we only show the curve for the circle network as it is the loosest one among the others. 
In Fig.~\ref{fig:infeas_sm} and Fig.~\ref{fig:infeas_lg}, we observe that as more edges are added to the network, the convergence rate improves as expected -- improvement in consensus violation is more noticeable than that in suboptimality.

Fig.~\ref{fig:relfun_sm_den} and~\ref{fig:infeas_sm_den} compare convergence rates of DPGA as $|\cE|/|\cN|$, the density of edges in the network changes. We tested with $2$ and $3$ average number of edges per node for small-world networks with $N=10$ and $N=50$. It is worth noting that the network size has more impact on convergence rate than average edge density, i.e., the smaller the network faster the convergence is. On the other hand, for fixed size network, higher the density faster the convergence is.
\begin{figure}[h]
\centering
	\begin{subfigure}[b]{0.45\textwidth}
        \centering
        \includegraphics[width = \textwidth]{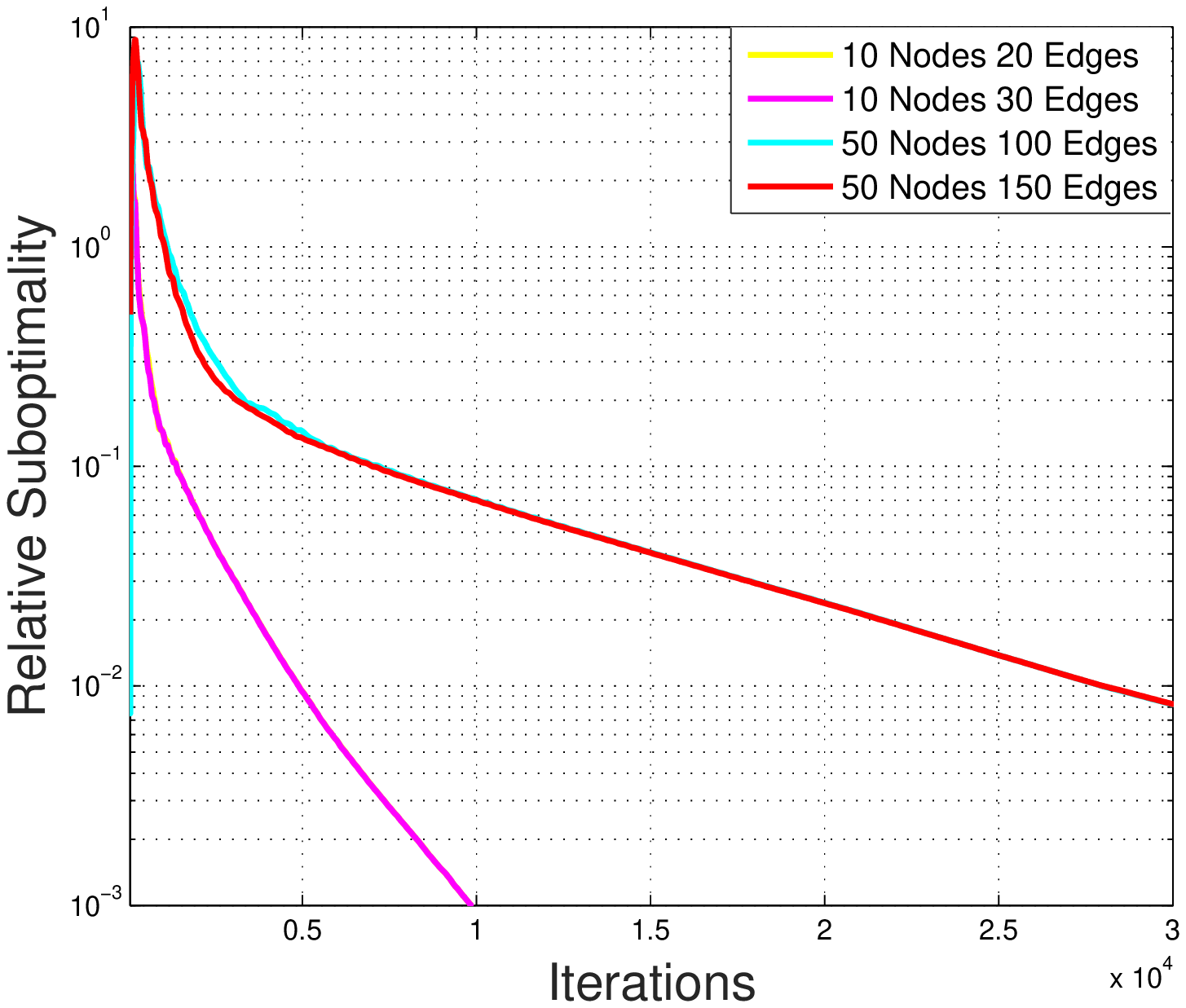}
        \caption{Relative sub-optimality}
        \label{fig:relfun_sm_den}
   	\end{subfigure}%
    ~
    	\begin{subfigure}[b]{0.45\textwidth}
        \centering
        \includegraphics[width = \textwidth]{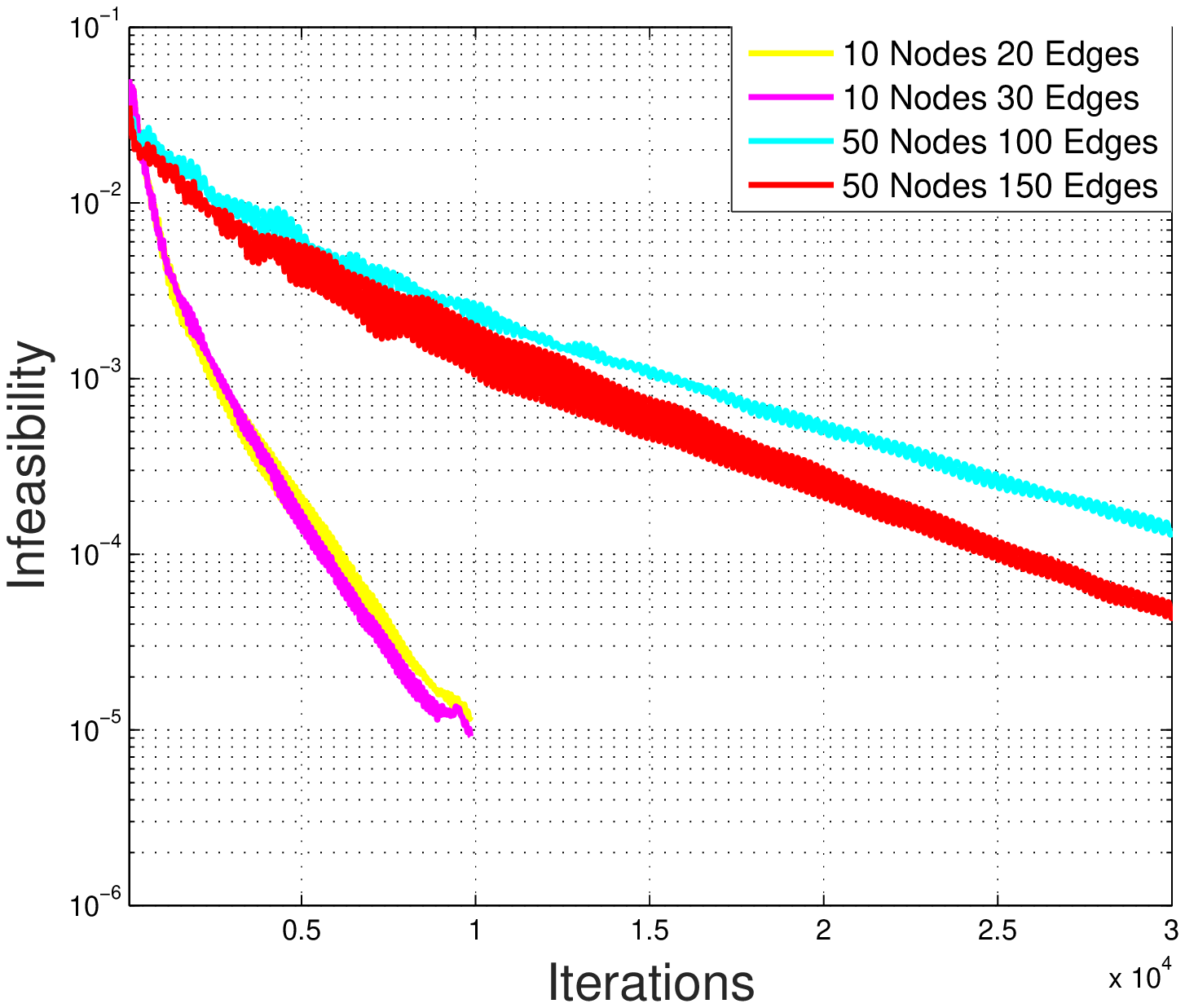}
        \caption{Infeasibility}
        \label{fig:infeas_sm_den}
    	\end{subfigure}
\caption{The effect of connectivity vs network size}
\vspace*{-5mm}
\end{figure}

Finally, we compared DPGA with SDPGA when the noise variance $\sigma\in\{0.01, 0.1, 1\}$ on a random small-world network with $N=10$ and $|\cE|=20$. Although DPGA is clearly faster than SDPGA, it turns out that the theoretical $\cO(1/\sqrt{t})$ rate for SDPA is not tight and empirically SDPA performs much better even though diminishing stepsize of $\cO(1/\sqrt{k})$ is used.
\begin{figure}[h]
\centering
	\begin{subfigure}[b]{0.45\textwidth}
        \centering
        \includegraphics[width = \textwidth]{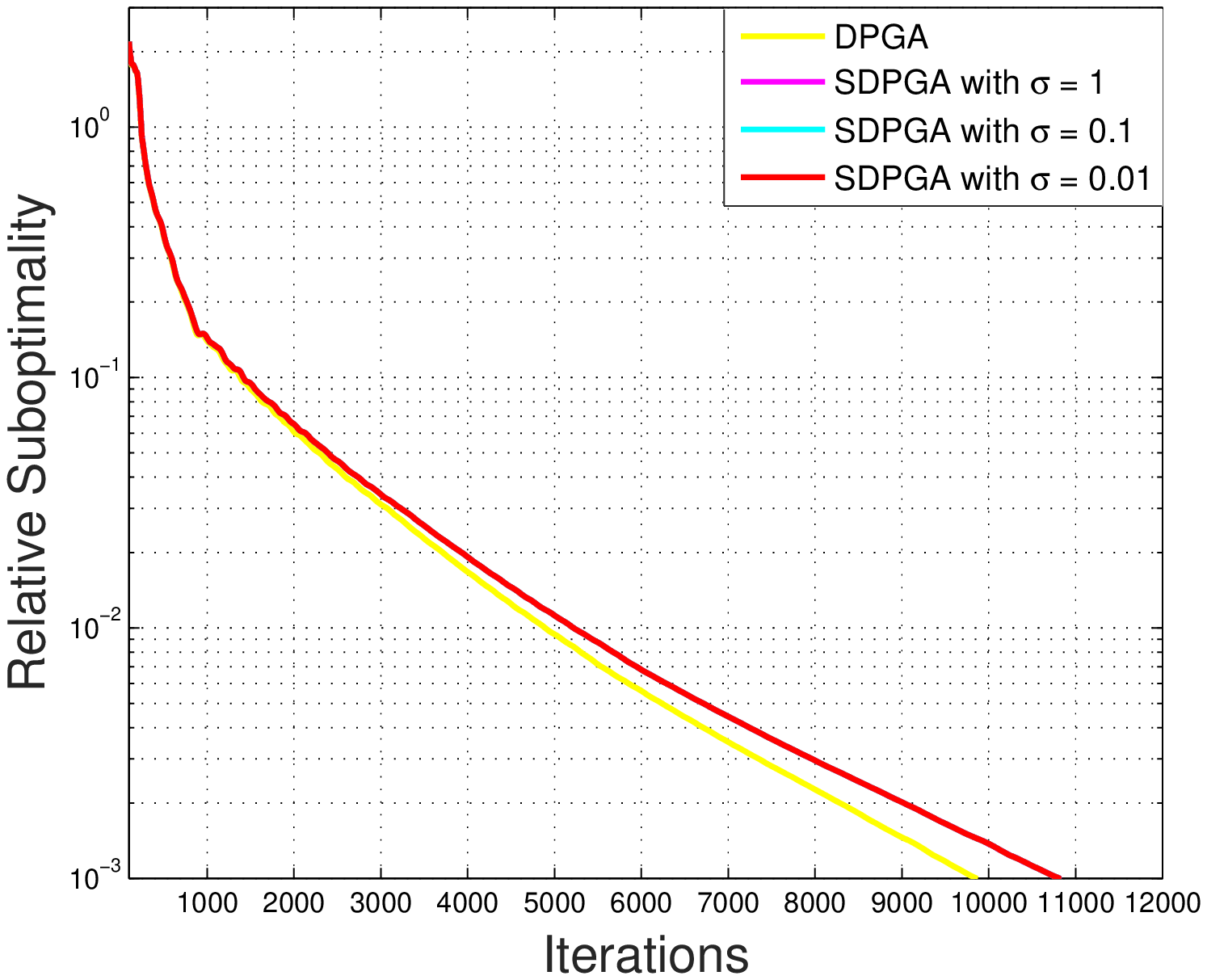}
        \caption{Relative sub-optimality}
        \label{fig:relfun_sdpga}
   	\end{subfigure}%
    ~
    	\begin{subfigure}[b]{0.45\textwidth}
        \centering
        \includegraphics[width = \textwidth]{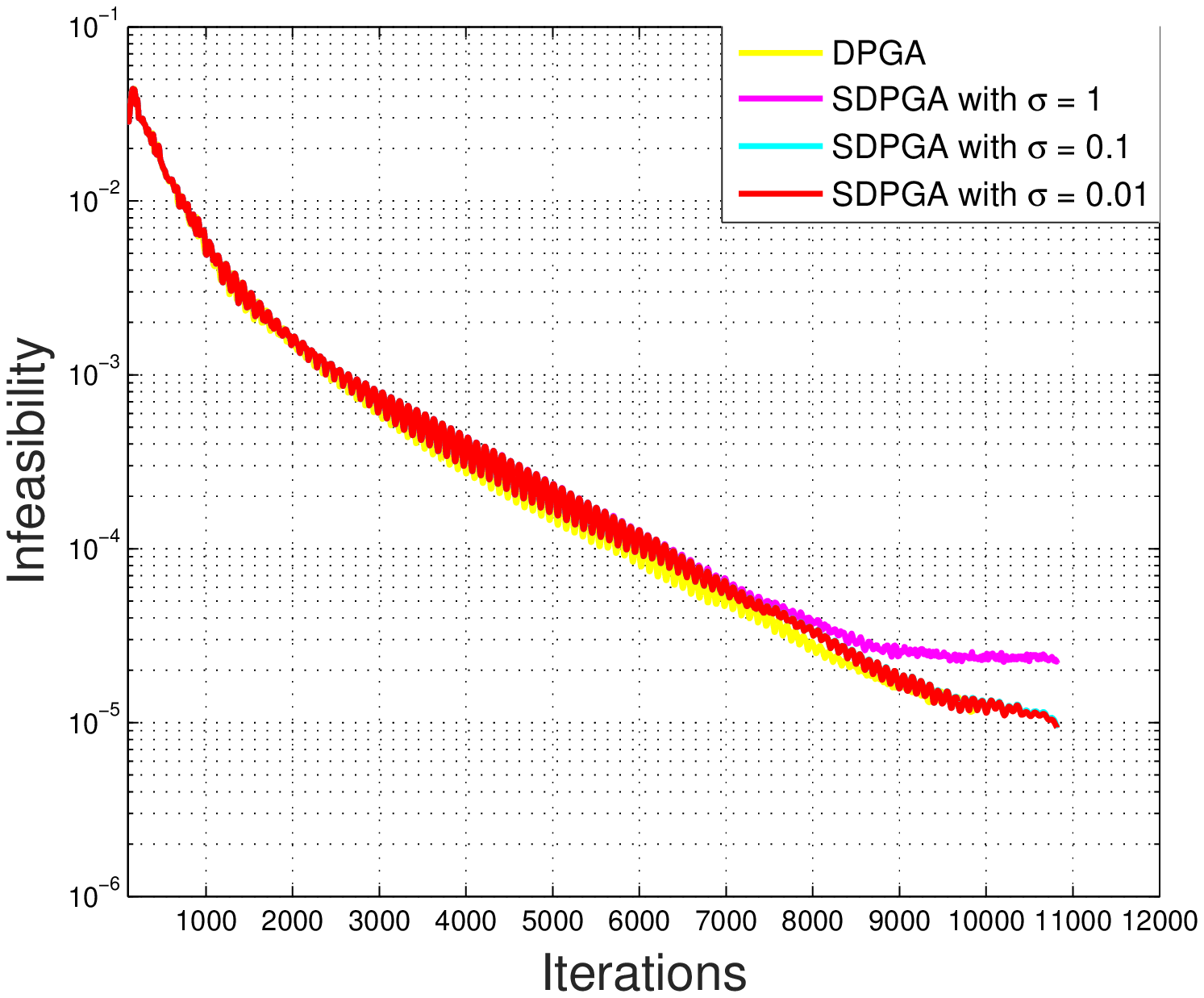}
        \caption{Infeasibility}
        \label{fig:infeas_sdpga}
    	\end{subfigure}
\caption{The effect of connectivity vs network size}
\end{figure}
\section{Conclusion}
In this paper, we studied distributed proximal gradient ADMM and its stochastic counterpart for distributed minimization of composite convex functions over connected networks. The convergence rates of these methods were analyzed. Comparing with existing works, the advantages of our methods are as follows: DPGA, DPGA-W, SDPGA and SDPGA-W are fully distributed, i.e., the agents are \emph{not} required to know any global parameters depending on the entire network topology, e.g., the second smallest eigenvalue of the Laplacian; instead, we only assume that agents know who their neighbors are. Using only local communication, our \emph{node-based} distributed algorithms require less communication burden and memory storage compared to edge-based distributed algorithms. The proposed algorithms consist of a \emph{single loop}, i.e., there are no outer and inner iteration loops; therefore, they are easy and practical to be implemented over distributed networks. To sum up, there are many practical problems where one can compute the prox map for $\xi_i$ efficiently; however, computing the prox map for $\Phi_i=\xi_i+f_i$ is not easy. The methods proposed in this paper can compute an $\epsilon$-optimal $\epsilon$-feasible solution in
$\cO(\epsilon^{-1})$ iterations \emph{without} assuming bounded $\grad f_i$ for any $i\in\cN$, where each iteration requires computing $\prox{\xi_i}$ and $\grad f_i$ for $i\in\cN$, and one or two communication rounds among the neighbors -- hence, $\cO(\epsilon^{-1})$ communications per node in total.

\ifCLASSOPTIONcaptionsoff
  \newpage
\fi

\bibliography{paper}

\begin{thebibliography}{10}

\bibitem{Tsitsiklis84_1T}
J.~N. Tsitsiklis.
\newblock {\em Problems in Decentralized Decision Making and Computation}.
\newblock PhD thesis, Massachusetts Institute of Technology, 1984.

\bibitem{chang2015multi}
T.-H. Chang, M.~Hong, and X.~Wang.
\newblock Multi-agent distributed optimization via inexact consensus {ADMM}.
\newblock {\em Signal Processing, IEEE Transactions on}, 63(2):482--497, 2015.

\bibitem{lesser2003distributed}
Victor Lesser, Charles~L Ortiz~Jr, and Milind Tambe.
\newblock {\em Distributed sensor networks: A multiagent perspective},
  volume~9.
\newblock Springer, 2003.

\bibitem{ling2010decentralized}
Qing Ling and Zhi Tian.
\newblock Decentralized sparse signal recovery for compressive sleeping
  wireless sensor networks.
\newblock {\em Signal Processing, IEEE Transactions on}, 58(7):3816--3827,
  2010.

\bibitem{ravazzi2015distributed}
Chiara Ravazzi, Sophie~Marie Fosson, and Enrico Magli.
\newblock Distributed iterative thresholding for $\ell_0/\ell_1$-regularized
  linear inverse problems.
\newblock {\em Information Theory, IEEE Transactions on}, 61(4):2081--2100,
  2015.

\bibitem{schizas2008consensus}
Ioannis~D Schizas, Alejandro Ribeiro, and Georgios~B Giannakis.
\newblock Consensus in ad hoc {WSN}s with noisy links - {Part I}: Distributed
  estimation of deterministic signals.
\newblock {\em Signal Processing, IEEE Transactions on}, 56(1):350--364, 2008.

\bibitem{forero2010consensus}
Pedro~A Forero, Alfonso Cano, and Georgios~B Giannakis.
\newblock Consensus-based distributed support vector machines.
\newblock {\em The Journal of Machine Learning Research}, 11:1663--1707, 2010.

\bibitem{mcdonald2010distributed}
Ryan McDonald, Keith Hall, and Gideon Mann.
\newblock Distributed training strategies for the structured perceptron.
\newblock In {\em Human Language Technologies: The 2010 Annual Conference of
  the North American Chapter of the Association for Computational Linguistics},
  pages 456--464. Association for Computational Linguistics, 2010.

\bibitem{yan2013distributed}
F.~Yan, S.~Sundaram, S.~Vishwanathan, and Y.~Qi.
\newblock Distributed autonomous online learning: Regrets and intrinsic
  privacy-preserving properties.
\newblock {\em Knowledge and Data Engineering, IEEE Transactions on},
  25(11):2483--2493, 2013.

\bibitem{lopuhaa1991breakdown}
Hendrik~P Lopuhaa and Peter~J Rousseeuw.
\newblock Breakdown points of affine equivariant estimators of multivariate
  location and covariance matrices.
\newblock {\em The Annals of Statistics}, pages 229--248, 1991.

\bibitem{mateos2010distributed}
Gonzalo Mateos, Juan~Andr{\'e}s Bazerque, and Georgios~B Giannakis.
\newblock Distributed sparse linear regression.
\newblock {\em Signal Processing, IEEE Transactions on}, 58(10):5262--5276,
  2010.

\bibitem{chambolle2015ergodic}
Antonin Chambolle and Thomas Pock.
\newblock On the ergodic convergence rates of a first-order primal--dual
  algorithm.
\newblock {\em Mathematical Programming}, pages 1--35, 2015.

\bibitem{condat2013primal}
Laurent Condat.
\newblock A primal--dual splitting method for convex optimization involving
  lipschitzian, proximable and linear composite terms.
\newblock {\em Journal of Optimization Theory and Applications},
  158(2):460--479, 2013.

\bibitem{Duchi12}
J.~C. Duchi, A.~Agarwal, and M.~J. Wainwright.
\newblock Dual averaging for distributed optimization: Convergence analysis and
  network scaling.
\newblock {\em IEEE Trans. Automat. Contr.}, 57(3):592--606, 2012.

\bibitem{nedic2009distributed}
Angelia Nedic and Asuman Ozdaglar.
\newblock Distributed subgradient methods for multi-agent optimization.
\newblock {\em Automatic Control, IEEE Transactions on}, 54(1):48--61, 2009.

\bibitem{wei2012_1}
Ermin Wei and Asuman Ozdaglar.
\newblock Distributed alternating direction method of multipliers.
\newblock In {\em Decision and Control (CDC), 2012 IEEE 51st Annual Conference
  on}, pages 5445--5450. IEEE, 2012.

\bibitem{makhdoumi2014broadcast}
Ali Makhdoumi and Asuman Ozdaglar.
\newblock Broadcast-based distributed alternating direction method of
  multipliers.
\newblock In {\em Communication, Control, and Computing (Allerton), 2014 52nd
  Annual Allerton Conference on}, pages 270--277. IEEE, 2014.

\bibitem{wei20131}
Ermin Wei and Asuman Ozdaglar.
\newblock On the o (1/k) convergence of asynchronous distributed alternating
  direction method of multipliers.
\newblock {\em arXiv preprint arXiv:1307.8254}, 2013.

\bibitem{jakovetic2011fast}
Dusan Jakovetic, Joao Xavier, and Jose~MF Moura.
\newblock Fast distributed gradient methods.
\newblock {\em Automatic Control, IEEE Transactions on}, 59(5):1131--1146,
  2014.

\bibitem{chen2012fast}
Annie~I Chen and Asuman Ozdaglar.
\newblock A fast distributed proximal-gradient method.
\newblock In {\em Communication, Control, and Computing (Allerton), 2012 50th
  Annual Allerton Conference on}, pages 601--608. IEEE, 2012.

\bibitem{shi2015proximal}
Wei Shi, Qing Ling, Gang Wu, and Wotao Yin.
\newblock A proximal gradient algorithm for decentralized composite
  optimization.
\newblock {\em Submitted to IEEE Transactions on Signal Processing}, 2015.

\bibitem{icml2015_aybat15}
N.~S. Aybat, Z.~Wang, and G.~Iyengar.
\newblock An asynchronous distributed proximal gradient method for composite
  convex optimization.
\newblock In {\em Proceedings of the 32nd International Conference on Machine
  Learning (ICML-15)}, pages 2454--2462. JMLR Workshop and Conference
  Proceedings, 2015.

\bibitem{shi2015extra}
Wei Shi, Qing Ling, Gang Wu, and Wotao Yin.
\newblock Extra: An exact first-order algorithm for decentralized consensus
  optimization.
\newblock {\em SIAM Journal on Optimization}, 25(2):944--966, 2015.

\bibitem{bianchi2014stochastic}
Pascal Bianchi, Walid Hachem, and Franck Iutzeler.
\newblock A stochastic primal-dual algorithm for distributed asynchronous
  composite optimization.
\newblock In {\em Proceedings of the 2nd Global Conference on Signal and
  Information Processing (GlobalSIP)}, pages 732--736, 2014.

\bibitem{Ling15_1J}
Q.~Ling, W.~Shi, G.~Wu, and A.~Ribeiro.
\newblock {DLM}: Decentralized linearized alternating direction method of
  multipliers.
\newblock {\em Signal Processing, IEEE Transactions on}, 63(15):4051--4064,
  2015.

\bibitem{hong2015stochastic}
Mingyi Hong and Tsung-Hui Chang.
\newblock Stochastic proximal gradient consensus over random networks.
\newblock {\em arXiv preprint arXiv:1511.08905}, 2015.

\bibitem{tsianos2012push}
Konstantinos~I Tsianos, Sean Lawlor, and Michael~G Rabbat.
\newblock Push-sum distributed dual averaging for convex optimization.
\newblock In {\em 2012 IEEE 51st IEEE Conference on Decision and Control
  (CDC)}, pages 5453--5458. IEEE, 2012.

\bibitem{nedic2015distributed}
Angelia Nedi{\'c} and Alex Olshevsky.
\newblock Distributed optimization over time-varying directed graphs.
\newblock {\em IEEE Transactions on Automatic Control}, 60(3):601--615, 2015.

\bibitem{lobel2011distributed}
Ilan Lobel and Asuman Ozdaglar.
\newblock Distributed subgradient methods for convex optimization over random
  networks.
\newblock {\em IEEE Transactions on Automatic Control}, 56(6):1291--1306, 2011.

\bibitem{Ma-Zhang-EGADM-2013}
T.~Lin, S.~Ma, and S.~Zhang.
\newblock An extragradient-based alternating direction method for convex
  minimization.
\newblock {\em Accepted in Foundations of Computational Mathematics}, 2015.

\bibitem{Gao-Jiang-Zhang-2014}
X.~Gao, B.~Jiang, and S.~Zhang.
\newblock On the information-adaptive variants of the {ADMM}: an iteration
  complexity perspective.
\newblock {\em preprint}, 2014.

\bibitem{rockafellar2015convex}
Ralph~Tyrell Rockafellar.
\newblock {\em Convex analysis}.
\newblock Princeton university press, 1997.

\bibitem{Beck09}
A.~Beck and M.~Teboulle.
\newblock A fast iterative shrinkage-thresholding algorithm for linear inverse
  problems.
\newblock {\em SIAM J. Img. Sci.}, 2(1):183--202, March 2009.

\bibitem{Goldfarb-Scheinberg-fastlinesearch2011}
K.~Scheinberg, D.~Goldfarb, and X.~Bai.
\newblock Fast first-order methods for composite convex optimization with
  backtracking.
\newblock {\em Foundations of Computational Mathematics}, 14(3):389--417, 2014.

\bibitem{Yuan11_2J}
Bingsheng He and Xiaoming Yuan.
\newblock On the $\mathcal{O}(1/n)$ convergence rate of the douglas-rachford
  alternating direction method.
\newblock {\em SIAM Journal on Numerical Analysis}, 50(2):700--709, 2012.

\bibitem{Aybat15_2J}
N.S. Aybat and G.~Iyengar.
\newblock An alternating direction method with increasing penalty for stable
  principal component pursuit.
\newblock {\em Computational Optimization and Applications}, 61(3):635--668,
  2015.

\bibitem{nesterov2013gradient}
Yu~Nesterov.
\newblock Gradient methods for minimizing composite functions.
\newblock {\em Mathematical Programming}, 140(1):125--161, 2013.

\bibitem{Tseng08}
Paul Tseng.
\newblock On accelerated proximal gradient methods for convex-concave
  optimization.
\newblock {\em submitted to SIAM Journal on Optimization}, 2008.

\end{thebibliography}
\bibliographystyle{unsrt}
\end{document}